\newcommand\rmd {\,{\rm d}}
\newtheorem{remark}{Remark}[section]
\newtheorem{example}{Example}[section]
\newtheorem{assumption}{Assumption}[section]
\title{Conditional Stability and Numerical Reconstruction of a Parabolic Inverse Source Problem Using Carleman Estimates\thanks{The work of B. Jin is supported by Hong Kong RGC General Research Fund (Project 14306824), Hong Kong RGC  ANR / RGC Joint
Research Scheme (A-CUHK402/24) and a sart-up fund from The Chinese University of Hong Kong. The work of Z. Zhou is supported by National Natural Science Foundation of China (Project 12422117), Hong Kong
Research Grants Council (15303122) and an internal grant of Hong Kong Polytechnic University (Project
ID: P0038888, Work Programme: 1-ZVX3).}}
\author{Tianhao Hu\thanks{Department of Mathematics, The Chinese University of Hong Kong, Shatin, New Territories, Hong Kong SAR, P.R. China (\texttt{thhu@link.cuhk.edu.hk, qmquan@cuhk.edu.hk, quanqm@whu.edu.cn, b.jin@cuhk.edu.hk})}\and Xinchi Huang\thanks{Department of Physics, The University of Tokyo, 7-3-1, Hongo, Bunkyo-ku, Tokyo, 113-8654, Japan, and
Quemix Inc., 2-11-2, Nihombashi Chuo-ku, Tokyo, 103-0027, Japan} \and Bangti Jin\footnotemark[2] \and Qimeng Quan\footnotemark[2] \and Zhi Zhou\thanks{Department of Applied Mathematics, The Hong Kong Polytechnic University, Kowloon, Hong Kong, P.R. China (\texttt{zhizhou@polyu.edu.hk})}}
\date{\today}
\begin{document}

\maketitle

\maketitle
\begin{abstract}
In this work we develop a new numerical approach for recovering a spatially dependent source component in a standard parabolic equation from partial interior measurements. We establish novel conditional Lipschitz stability and H\"{o}lder stability for the inverse problem with and without boundary conditions, respectively, using suitable Carleman estimates. Then we propose a numerical approach for solving the inverse problem using conforming finite element approximations in both time and space. Moreover, by utilizing the conditional stability estimates, we prove rigorous error bounds on the discrete approximation. We present several numerical experiments to  illustrate the effectiveness of the approach. \vskip5pt

\textbf{Keywords}: inverse source problem, Carleman estimate, stability estimate, error estimate
\end{abstract}

\pagestyle{myheadings}
\thispagestyle{plain}

\section{Introduction}\label{sec:intro}
In this work, we investigate the inverse problem of recovering a space-dependent source component $f$ in parabolic equations. Let $\Omega\subset \mathbb{R}^n$ ($n=1, 2, 3$) be a bounded space domain with a smooth boundary $\partial \Omega$ and fix the terminal time $T>0$. Consider the following parabolic governing equation
\begin{equation}\label{eqn:gov}
	 \partial_t u + A u = F, \quad \text{in } \Omega\times (0,T),
\end{equation}
with a separable source $F=R(x,t)f(x)$. The space-dependent elliptic operator $A$ is given by
\begin{equation*}
	A v := - \sum_{i,j=1}^n \partial_{x_i}(a_{ij}\partial_{x_j} v) + \sum_{j=1}^n b_j \partial_{x_j} v + cv,
	\quad \forall v\in H^2(\Omega),
\end{equation*}
with the coefficients $b_1,\dots, b_n,c\in L^\infty(\Omega)$ and the coefficient matrix  $\big[a_{ij}\big]\in W^{1,\infty}(\Omega;\mathbb{R}^{n\times n})$ satisfying
\begin{equation*}
	\mu |\xi|^2 \leq \sum_{i,j=1}^n a_{ij}\xi_i\xi_j \leq \mu^{-1} |\xi|^2,
	\quad\mbox{with}\ \xi = (\xi_1,\ldots,\xi_n)^T\in \mathbb{R}^n, \ \forall x\in\Omega,
\end{equation*}
for some constant $\mu\in(0,1)$. For any given $f\in L^2(\Omega)$, equation \eqref{eqn:gov} generally lacks uniqueness when both initial condition $u(0)\equiv u_0$ and boundary condition $u|_{\partial\Omega\times (0,T]}$ are missing. Additional measurements are required in order to guarantee the well-definedness of the inverse problem.

The concerned inverse problem is to (approximately) recover the space-dependent component $f^\dag$ in the source $F$ from suitable over-posed data. In this work, we focus on partial interior data. Specifically, let $\omega$ be a proper compact subdomain of $\Omega$ and $I_\zeta(t_0)\subset(0,T)$ be a time subinterval centered at the point $t=t_0$ with a radius $\zeta\in(0,\min(T-t_0,t_0))$. Consider the following two types of noisy observational data:
\begin{equation}\label{data: zero boundary}
\mbox{(when }u^\dagger = 0 \mbox{ on } \partial\Omega\times (0,T))\quad
\left\{\begin{aligned}
&(q,\partial_t q) \approx (u^\dagger,\partial_t u^\dagger),\ \mbox{in}\ \omega\times I_{\zeta}(t_0),\\
&p \approx Au^\dagger(t_0),\ \mbox{in }\ \Omega,
\end{aligned}
\right.
\end{equation}
\begin{equation}\label{data: unknown boundary}
\mbox{(when } u^\dag \mbox{ is unknown on } \partial\Omega\times (0,T))\quad\left\{\begin{aligned}
& (q,\partial_t q) \approx (u^\dagger,\partial_t u^\dagger), \ \mbox{in}\ \omega\times I_{\zeta}(t_0),\\
& r\approx\nabla\partial_t u^\dagger, \ \mbox{in}\ \omega\times I_{\zeta}(t_0),\\
&p \approx Au^\dagger(t_0),\ \mbox{in }\ \Omega,
\end{aligned}
\right.
\end{equation}
where $u^\dag$ denotes the exact state (corresponding to the exact source $f^\dag$).
The accuracy $\delta$ of the observational data \eqref{data: zero boundary} and \eqref{data: unknown boundary} are given by the noise level $\delta$, defined respectively by
\begin{equation*}
\delta:=\left\{\begin{aligned}
& \|q-u^\dagger\|_{H^1(I_\zeta(t_0);L^2(\omega))} + \|p-Au^\dagger(t_0)\|_{L^2(\Omega)},\quad &\mbox{for } \eqref{data: zero boundary}, \\
& \|q-u^\dagger\|_{H^1(I_\zeta(t_0);L^2(\omega))} + \|p-Au^\dagger(t_0)\|_{L^2(\Omega)} + \|r - \nabla\partial_t u^\dagger\|_{L^2(I_\zeta(t_0);L^2(\omega))}, \quad &\mbox{for } \eqref{data: unknown boundary}.
\end{aligned}
\right.
\end{equation*}
The inverse problem is to recover $f^\dag$ from the given observational data.
Inverse source problems for parabolic equations of recovering an unknown source component (in either time or space or both) in a parabolic PDE from limited measurement data arise in a broad range of practical applications, e.g., heat conduction \cite{WoodburyBeck:2023}, contaminant transport \cite{el2002inverse,Andrle03042015} and biomedical imaging \cite{KLOSE2005323}. This class of inverse problems is often ill-posed due to a lack of existence, uniqueness and stability, which poses significant challenges to their stable numerical reconstruction.

In this work, we contribute to the mathematical analysis and computational techniques of the inverse source problem for the model \eqref{eqn:gov}, and make the following two contributions. First, under mild (regularity) conditions on $f$ and $R\equiv R(x,t)$ in Assumption \ref{ass:reg}, we establish two novel conditional stability estimates with / without boundary conditions:
\begin{align*}
& \mbox{Lipschitz type: }\quad\|f\|_{L^2(\Omega)}\leq c(\|\partial_tu\|_{L^2(\omega\times I_\zeta(t_0))} + \|Au(t_0)\|_{L^2(\Omega)}), \\
& \mbox{H\"{o}lder type: }\quad \|f\|_{L^2(\Omega_0)}\leq c(\|\partial_tu\|_{L^2(I_\zeta(t_0); H^1(\omega))}+\|Au(t_0)\|_{L^2(\Omega)})^\theta,
\end{align*}
where $\Omega_0$ is an arbitrary subdomain such that $\overline{\Omega}_0\subset\Omega$, and the constants $c>0$ and $\theta\in(0,1)$ depend on $\Omega_0,\omega,t_0,\zeta,\Omega$ and $R$.
See Theorems \ref{thm: Lip-stab} and \ref{thm:hold-stab} for the precise statements. The key of the proof lies in constructing suitable weighted functions and two crucial Carleman estimates; see Lemmas \ref{lem: Carleman estimate} and \ref{lem:local-car-estimate} and the proof of Theorem \ref{thm:hold-stab} for details. Moreover, these stability results can be extended to the perturbation form, which plays a central role in the error analysis. Second, inspired by the conditional stability, we propose a novel least-squares formulation based on the residual of the governing equation and an $L^2(\Omega)$ penalty on the source $f$, and develop a fully discrete space-time FEM scheme for the inverse  problem. We approximate the state $u$ and the source $f$ in the least-squares formulation using the space-time $H^2$ conforming FE space (cf. \eqref{eqn:tensor element}), and the standard $H^1(\Omega)$ conforming linear FE space, respectively. By combining the stability estimates in Theorems \ref{thm:Lip-perturb-stability} and \ref{thm:hold-perturb-stab} and consistency bounds on the fully discrete scheme in Lemmas \ref{lem:approx-uhs-fhs-Ihpih} and \ref{lem:approx-uhs-fhs-Ihpih-hol}, we derive novel \textit{a priori} error bounds in $L^2(\Omega)$ and $L^2(\Omega_0)$ for the recovered source $f_h^*$; see Theorem \ref{thm:err-estimate} for the precise estimates. Moreover, we carry out several numerical experiments in one- and two-dimensions to illustrate the performance of the method. The numerical results agree well with theoretical convergence rates.

Now we review existing studies from the theoretical and numerical aspects on parabolic inverse source problems. There are several theoretical studies on the inverse source problem \cite{Cannon:1968,yamamoto1993conditional,choulli2004conditional,imanuvilov1998lipschitz}. Yamamoto  \cite{yamamoto1993conditional} studied the inverse source problem for parabolic equations in rectangular domains with one single lateral boundary measurement over whole / partial time intervals, and established the uniqueness and logarithmic conditional stability for a class of unknown sources lying in a bounded set in some Sobolev space; see also \cite{choulli2004conditional} for doubly logarithmic stability for \textit{a posteriori} boundary measurement. Imanuvilov and Yamamoto \cite{imanuvilov1998lipschitz} investigated the inverse source problem on a general domain $\Omega$ and established Lipschitz stability results for two types of overdetermined data, in which the key tool is Carleman estimates cf. \cite[Lemma 1.1]{emanuilov1995controllability}.
This approach has been substantially further developed for the parabolic model. Imanuvilov and Yamamoto  \cite{Imanuvilov2001Carleman} established pointwise Carleman estimates, and proved a novel stability estimate in a weak space (i.e., $H^{-1}(\Omega)$). For more extensive discussions on Carleman estimates for parabolic problems and their applications, we refer readers to  the comprehensive survey \cite{yamamoto2009carleman}. Recently, Huang et al \cite{huang2020stability} studied the inverse source problem for a parabolic equation without the \textit{a priori} knowledge of the initial condition, and proved a H\"{o}lder stability estimate from partial boundary observations using a novel local Carleman estimate \cite[Lemma 2]{huang2020stability}.

In practice, one often employs a suitable regularization strategy and constructs a discrete approximation. This is often achieved by combining Tikhonov regularization \cite{EnglHankeNeubauer:1996,ItoJin:2015} with classic / novel discretization methods, e.g., the finite difference method (FDM), the Galerkin finite element method (FEM) and deep neural networks (DNNs). H\`{a}o et al \cite{hao2017determination} investigated Tikhonov regularization for recovering the source from  a finite number of the integral means around discrete points in the domain $\Omega$, and also proved the convergence of the Galerkin FEM discretization. For the inverse source problem for the diffusion equation from the terminal measurement at scattered points subject to random noise, Chen et al \cite{chen2022stochastic} established stochastic error bounds in weaker topologies for both continuous regularized and its FEM approximations. Zhang et al \cite{zhang2023stability} employed physics informed neural network to recover the source and provided an \textit{a priori} error estimate for the neural network approximation. The error analysis in \cite{chen2022stochastic} and \cite{zhang2023stability} utilizes the stability estimate of the continuous problem with one single measurement over the whole domain $\Omega$. In sharp contrast, so far there are very few works on the numerical analysis of the inverse source problem with partial interior data. The challenge lies in the fact that continuous stability estimates typically are derived via Carleman estimates which involve high regularity assumptions and thus are not directly amenable with numerical treatment. For discrete models, one possible strategy is to establish discrete Carleman estimates \cite{boyer2010discrete1d,boyer2010discretehd,boyer2011uniform}. Boyer et al. \cite{boyer2010discrete1d,boyer2010discretehd} employed a finite difference method to discretize the parabolic problem in the space variable in the context of uniform null-controllability of parabolic equations, and derived a global discrete Carleman estimate for approximating the elliptic operator \cite[Theorem 5.5]{boyer2010discrete1d} and \cite[Theorem 1.4]{boyer2010discretehd}. This requires delicate mesh construction, discrete calculus and the assumption on the large Carleman parameter $s$ and the mesh size $h$. An alternative approach is to utilize standard discretization schemes that is compatible with the continuous Carleman estimates. Burman et al. \cite{burman2018fully} developed a fully discrete scheme using continuous linear finite elements in space and backward Euler discretization in time for parabolic data assimilation problems, and for the piecewise linear temporal interpolation of the time-stepping solutions, and derived an \textit{a priori} error bound using the continuous Carleman estimates.
The approach only requires the FE space be consistent with the regularity involved in the Carleman estimates.

The rest of the paper is organized as follows. In Section \ref{sec:stability}, we present two conditional stability estimates for the concerned inverse problem. In Section \ref{sec:approximation}, we develop the numerical scheme and derive error bounds on the discrete approximation. In Section \ref{sec:numer}, we present several numerical experiments in one- and two-dimensional cases to illustrate the numerical scheme.
For any $m\geq0$ and $r\geq1$, we denote by $W^{m,r}(\Omega)$ and $W^{m,r}_0(\Omega)$ the standard Sobolev spaces of order $m$, equipped with the norm $\|\cdot\|_{W^{m,r}(\Omega)}$ \cite{Adams2003Sobolev}. Further, we write $H^{m}(\Omega)$ and $H^{m}_{0}(\Omega)$ with the norm $\|\cdot\|_{H^m(\Omega)}$ if $r=2$ and write $L^r(\Omega)$ with the norm $\|\cdot\|_{L^r(\Omega)}$ if $m=0$. The notation $(\cdot,\cdot)$ denotes the $L^2(\Omega)$ inner product. We also use Bochner spaces: for a Banach space $B$, let
$W^{m,r}(0,T;B) = \{v: v(t)\in B\ \mbox{for a.e.}\ t\in(0,T)\ \mbox{and}\ \|v\|_{W^{m,r}(0,T;B)}<\infty \}$.
We write the time subinterval $I\equiv I_\zeta(t_0)$, and  denote the space-time domains by $Q=\Omega\times I$ and $Q_\omega=\omega\times I$. Moreover, we denote by $c$, with or without a subscript, a generic constant which may differ at each occurrence but is always independent of the discretization parameters $h$ and $\tau$, the noise level $\delta$, and the regularization parameter $\gamma$. Throughout,  $\nabla=(\partial_{x_1},\partial_{x_2},\dots,\partial_{x_n})$ denotes the space gradient and $\nabla_{x,t} := (\nabla, \partial_t)$.

\section{Conditional stability}\label{sec:stability}
In this section we recall preliminaries on Carleman estimates and derive two new conditional stability estimates for the concerned inverse source problem.
\subsection{Carleman estimates}
First we recall several useful global and interior Carleman estimates. Note that there exists a smooth function $d\in C^2(\overline\Omega)$ and a constant $\sigma_d$ such that \cite[Lemma 4.1]{yamamoto2009carleman}:
\begin{equation}\label{d:condition}
d>0 \quad \text{in }\Omega,
	\qquad d=0 \quad \text{on }\partial\Omega,
	\qquad |\nabla d| \ge \sigma_d >0 \quad \text{in } \overline{\Omega\setminus \omega}.
\end{equation}
Then we define two singular functions by $\alpha \equiv\alpha(x,t)$ and $\varphi\equiv\varphi(x,t)$ in the space-time domain $\overline{\Omega}\times I$ by
\begin{equation}\label{eqn:al-varphi}
   \alpha(x,t) = \frac{e^{\lambda d(x)}-e^{2\lambda\|d\|_{C(\overline\Omega)}}}{(t-t_0+\zeta)(t_0+\zeta-t)} \quad\mbox{and}\quad \varphi(x,t) = \frac{e^{\lambda d(x)}}{(t-t_0+\zeta)(t_0+\zeta-t)},
\end{equation}
for some large $\lambda>0$. Next we define a regular function by $\psi\equiv\psi(x,t)$ in $\overline{Q}$ by
\begin{equation}\label{eqn:psi}
    \psi(x,t) = d(x) - \beta(t-t_0)^2,
\end{equation}
with the constant $\beta>0$ chosen sufficiently large such that $\|d\|_{C(\overline\Omega)}-\beta\zeta^2<0$. Moreover, we define $\vartheta\equiv \vartheta(x,t):= e^{\lambda \psi}$ in the cylinder $\overline{Q}$.

The following lemma collects several pointwise estimates on the functions $\alpha$, $\varphi$ and $\psi$.
\begin{lemma}\label{lem:point-estimate}
    Let $\alpha$, $\varphi$ and $\psi$ be given by \eqref{eqn:al-varphi} and \eqref{eqn:psi}. Then the following estimates hold
    \begin{equation*}
    \begin{split}
        &\alpha < 0,\quad e^{2s\alpha}\leq e^{2s\alpha(t_0)}\leq1,\quad \partial_t\alpha \leq c\zeta\varphi^2, \quad \varphi^{-1}\leq \zeta^2\quad \mbox{in }\ \overline{\Omega}\times I, \\
        & \psi(t_0)>0 \ \quad\mbox{in }\ \Omega,\quad \psi(x,t) \leq \psi(x,t_0) \quad\mbox{in }\ \overline{Q},\\& \psi(x,t) \leq 0 \quad\mbox{on }\ \partial\Omega\times I, \quad \psi(x,t_0\pm\zeta)<0\quad\mbox{in }\ \Omega.
    \end{split}
    \end{equation*}
    Moreover, for any $s>0$ and $k\in \mathbb{N}$, the following asymptotic property holds
\begin{equation*}
    \lim_{t\to t_0\pm \zeta} \varphi^ke^{s\alpha} = 0, \quad\forall x\in\overline\Omega.
\end{equation*}
\end{lemma}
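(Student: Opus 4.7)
The plan is to verify each assertion by direct computation using the explicit formulas \eqref{eqn:al-varphi} for $\alpha,\varphi$ and \eqref{eqn:psi} for $\psi$, together with the defining properties of $d$ in \eqref{d:condition} and the choice $\|d\|_{C(\overline\Omega)}-\beta\zeta^2<0$ for $\beta$. I would introduce the shorthand $g(t):=(t-t_0+\zeta)(t_0+\zeta-t)=\zeta^2-(t-t_0)^2$, so that for $t\in I$ one has $0<g(t)\le \zeta^2$, with maximum attained at $t=t_0$, and $\alpha=(e^{\lambda d}-e^{2\lambda\|d\|_{C(\overline\Omega)}})/g(t)$, $\varphi=e^{\lambda d}/g(t)$.

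First I would handle the $\alpha$ and $\varphi$ estimates. The strict negativity $\alpha<0$ follows because $d(x)\le \|d\|_{C(\overline\Omega)}<2\|d\|_{C(\overline\Omega)}$ makes the numerator negative, while the denominator $g(t)$ is positive on $I$. Since $g(t)\le g(t_0)=\zeta^2$ and the numerator is a negative function of $x$ alone, dividing by a larger positive denominator yields a larger (less negative) value, so $\alpha(x,t)\le\alpha(x,t_0)\le 0$, which after exponentiation gives the chain $e^{2s\alpha}\le e^{2s\alpha(t_0)}\le 1$. For $\partial_t\alpha$, differentiating in $t$ yields $\partial_t\alpha=-g'(t)(e^{\lambda d}-e^{2\lambda\|d\|_{C(\overline\Omega)}})/g(t)^2$; using $|g'(t)|=|2(t-t_0)|\le 2\zeta$ on $I$, $e^{2\lambda\|d\|_{C(\overline\Omega)}}-e^{\lambda d}\le e^{2\lambda\|d\|_{C(\overline\Omega)}}$, and $\varphi^2=e^{2\lambda d}/g(t)^2\ge e^{-2\lambda\|d\|_{C(\overline\Omega)}}/g(t)^2$... actually more cleanly, $e^{2\lambda\|d\|_{C(\overline\Omega)}}=e^{2\lambda\|d\|_{C(\overline\Omega)}}\cdot e^{-2\lambda d}\cdot e^{2\lambda d}\le e^{2\lambda\|d\|_{C(\overline\Omega)}}\varphi^2 g(t)^2$, and the bound $\partial_t\alpha\le c\zeta\varphi^2$ follows with $c$ depending on $\lambda$ and $\|d\|_{C(\overline\Omega)}$. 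The inequality $\varphi^{-1}\le \zeta^2$ is immediate from $g(t)\le \zeta^2$ and $e^{\lambda d}\ge 1$.

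Next I would handle $\psi$. The four inequalities are just reading off the definition: $\psi(x,t_0)=d(x)>0$ in $\Omega$ and $\psi(x,t)-\psi(x,t_0)=-\beta(t-t_0)^2\le 0$ yield the first two; on $\partial\Omega$ we have $d=0$ so $\psi=-\beta(t-t_0)^2\le 0$; and at $t=t_0\pm\zeta$, $\psi=d(x)-\beta\zeta^2\le \|d\|_{C(\overline\Omega)}-\beta\zeta^2<0$ by the choice of $\beta$.

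Finally, for the vanishing limit, I would write $\varphi^k e^{s\alpha}=e^{k\lambda d}g(t)^{-k}\exp(-sC_x/g(t))$ with $C_x:=e^{2\lambda\|d\|_{C(\overline\Omega)}}-e^{\lambda d(x)}>0$ uniform in $x$ away from the degenerate case (and in fact $C_x\ge e^{2\lambda\|d\|_{C(\overline\Omega)}}-e^{\lambda\|d\|_{C(\overline\Omega)}}>0$). Setting $u=1/g(t)\to+\infty$ as $t\to t_0\pm\zeta$, the expression behaves like $u^k e^{-sC_x u}$, whose exponential decay beats the polynomial factor, so the limit is zero. None of these steps is delicate: this lemma is essentially a bookkeeping exercise about the weights, and the only point that requires a little care is the estimate of $\partial_t\alpha$, where one must track how the factor $e^{\lambda d}$ in the denominator of $\varphi^{-2}$ absorbs the constant $e^{2\lambda\|d\|_{C(\overline\Omega)}}$ in the numerator.
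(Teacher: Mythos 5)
Your proposal is correct and follows essentially the same route as the paper: a direct computation of $\partial_t\alpha$ and $\varphi^{-1}$ using the explicit formulas, the elementary monotonicity/sign observations for $\alpha$ and $\psi$, and the exponential-decay-beats-polynomial argument for the limit. You have merely filled in the bookkeeping that the paper leaves as "follows directly from the definitions," so there is nothing to correct.
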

\begin{proof}
    By direct computation and the construction of the function $d$, we have
    \begin{equation*}
        \partial_t\alpha = \frac{-(2t_0-2t)}{(t-t_0+\zeta)^2(t_0+\zeta-t)^2}\big(e^{\lambda d(x)}-e^{2\lambda\|d\|_{C(\overline\Omega)}}\big)\leq c\zeta\varphi^2,
    \end{equation*}
     and $\varphi^{-1} = (t-t_0+\zeta)(t_0+\zeta-t)e^{-\lambda d(x)} \le \zeta^2$. The asymptotic property holds by the the exponential decay of $e^{s\alpha}$ and the remaining estimates follow directly from the definitions.
\end{proof}

Let $Lv:= \partial_tv + Av$. The next lemma gives one standard global Carleman estimate \cite[Theorem 4.1]{yamamoto2009carleman}.
\begin{lemma}\label{lem: Carleman estimate}
Let $v\in H^1(I;L^2(\Omega))\cap L^2(I;H^2(\Omega)\cap H_0^1(\Omega))$. Then there exist $\lambda_0>0$ and $s_0>0$ such that the following estimate holds for all $\lambda\geq\lambda_0$ and  $s\ge s_0$
\begin{align*}
	&\int_Q  \bigg[\frac{1}{s\varphi}  (|\partial_t v|^2 + |Av|^2) + s\lambda^2 \varphi |\nabla v|^2
	+ s^3\lambda^4\varphi^3 |v|^2\bigg] e^{2s\alpha} \ {\rm d}x{\rm d}t \\
	\leq &c \int_Q |Lv|^2 e^{2s\alpha} \ {\rm d}x{\rm d}t
	+ c\int_{Q_\omega} s^3\lambda^4\varphi^3 |v|^2 e^{2s\alpha} \ {\rm d}x{\rm d}t.
\end{align*}
\end{lemma}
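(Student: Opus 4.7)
Since this is the standard Fursikov–Imanuvilov global Carleman estimate for parabolic equations, the plan is to follow the classical conjugation-and-splitting strategy. First I would introduce the substitution $w=e^{s\alpha}v$, so that $v=e^{-s\alpha}w$, and compute $e^{s\alpha}Lv$ in terms of $w$. A direct calculation using the fact that $\alpha$ depends smoothly on $(x,t)$ and that the principal part of $A$ is in divergence form gives an identity of the shape
\begin{equation*}
e^{s\alpha}Lv = P_1 w + P_2 w + R_s w,
\end{equation*}
where $P_1$ is the formally self-adjoint part (containing terms like $-\sum \partial_{x_i}(a_{ij}\partial_{x_j}w)$ and $-s^2\lambda^2\varphi^2 |\nabla_a d|^2 w$), $P_2$ is the formally skew-adjoint part (containing $\partial_t w$ and $-2s\lambda\varphi\sum a_{ij}(\partial_{x_i}d)\partial_{x_j}w$), and $R_s w$ gathers lower-order terms bounded by $s\lambda^2\varphi$ factors. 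Expanding $\|P_1 w+P_2 w\|_{L^2(Q)}^2 = \|P_1w\|^2 + \|P_2w\|^2 + 2(P_1w,P_2w)$ and rearranging gives
\begin{equation*}
\|P_1 w\|_{L^2(Q)}^2 + \|P_2w\|_{L^2(Q)}^2 + 2(P_1w,P_2w)_{L^2(Q)} \leq 2\|e^{s\alpha}Lv\|_{L^2(Q)}^2 + 2\|R_sw\|_{L^2(Q)}^2,
\end{equation*}
so the heart of the proof is to produce enough positivity out of the cross-term $(P_1w,P_2w)$.

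Next I would compute $(P_1w,P_2w)$ by integration by parts in both space and time. Each pairing of an order-two spatial term in $P_1$ with a first-order term in $P_2$ yields, after integration by parts, a collection of dominant positive contributions of the form $s\lambda^2\varphi|\nabla w|^2$ and $s^3\lambda^4\varphi^3|w|^2$, plus boundary traces on $\partial\Omega\times I$, plus lower-order remainders. The boundary traces on $\partial\Omega$ vanish because $w|_{\partial\Omega}=0$ (inherited from $v\in H^1_0(\Omega)$) and can be shown nonnegative using the outward-pointing property $\partial_\nu d\leq 0$ on $\partial\Omega$ implied by \eqref{d:condition}. The time boundary contributions at $t=t_0\pm\zeta$ vanish by the asymptotic property $\varphi^k e^{s\alpha}\to 0$ from Lemma \ref{lem:point-estimate}. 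Choosing $\lambda\geq\lambda_0$ large first, and then $s\geq s_0$ large, the dominant positive terms absorb $\|R_sw\|^2$ as well as the lower-order remainders, since each remainder carries at least one extra factor of $(s\lambda\varphi)^{-1}$ relative to the main terms. This yields the bound for the integrals of $s\lambda^2\varphi|\nabla w|^2$ and $s^3\lambda^4\varphi^3|w|^2$ on the whole of $Q$, but localized to $|\nabla d|\geq\sigma_d$, which by \eqref{d:condition} is everywhere except $\omega$; moving the complementary piece to the right-hand side produces the characteristic interior observation term $\int_{Q_\omega}s^3\lambda^4\varphi^3|w|^2\,dx\,dt$ (up to a cut-off argument).

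To obtain the $\frac{1}{s\varphi}(|\partial_t v|^2+|Av|^2)$ term on the left, I would use that $P_2w$ contains $\partial_t w$ and $P_1w$ contains the second-order elliptic part, so $\|\partial_t w\|^2_{L^2(s\varphi)^{-1}}$ and $\|Aw\|^2_{L^2(s\varphi)^{-1}}$ are controlled by $\|P_1w\|^2+\|P_2w\|^2$ plus already-controlled lower-order terms; one then undoes the conjugation $w=e^{s\alpha}v$ (paying weights on $\partial_t v$ and $\nabla v$, which are absorbed using $|\partial_t\alpha|\leq c\zeta\varphi^2$ and $|\nabla\alpha|\leq c\lambda\varphi$ from Lemma \ref{lem:point-estimate}) to convert the estimate back to $v$. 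Finally, a standard cut-off function $\chi$ supported slightly larger than $\omega$ is used so that only the $L^2$ norm of $v$ appears in the observation domain.

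The main obstacle is the bookkeeping of the cross-term $(P_1w,P_2w)$: one has to track the exact powers of $s$, $\lambda$ and $\varphi$ appearing in each term arising from integration by parts, ensure that the positive contributions dominate and that every remainder can be absorbed for $s,\lambda$ sufficiently large, and that the boundary terms on $\partial\Omega\times I$ have the right sign. This is an extremely standard, but lengthy, computation; since the statement coincides exactly with \cite[Theorem 4.1]{yamamoto2009carleman}, I would simply cite that reference rather than reproduce the calculation.
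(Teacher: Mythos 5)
Your proposal is correct and matches the paper's treatment: the paper gives no proof of this lemma at all, simply quoting it as the standard global Carleman estimate from \cite[Theorem 4.1]{yamamoto2009carleman}, which is exactly the citation you fall back on. Your sketch of the conjugation $w=e^{s\alpha}v$, the splitting into (skew-)symmetric parts, the positivity extracted from the cross term where $|\nabla d|\geq\sigma_d$, and the relocation of the complementary piece to the observation term $\int_{Q_\omega}s^3\lambda^4\varphi^3|v|^2e^{2s\alpha}$ is the standard argument underlying that cited result, so no gap remains.
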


The next result provides an interior Carleman estimate.  Unlike Lemma \ref{lem: Carleman estimate}, the following statement does not require any boundary data on the function $v$. Thus it is called an interior Carleman estimate.
\begin{lemma}\label{lem:local-car-estimate}
     Let $v\in H^1(I;L^2(\Omega))\cap L^2(I;H^2(\Omega))$. Then there exist $\lambda_0>0$ and $s_0>0$ such that the following estimate holds for all $\lambda\geq\lambda_0$ and  $s\ge s_0$
    \begin{align*}
    &\int_Q  \bigg[\frac{1}{s\vartheta}  (|\partial_t v|^2 + |Av|^2) + s\lambda^2 \vartheta |\nabla v|^2 + s^3\lambda^4\vartheta^3 |v|^2\bigg] e^{2s\vartheta} \ {\rm d}x{\rm d}t \\ & \leq c \int_Q |Lv|^2 e^{2s\vartheta} \ {\rm d}x{\rm d}t + c\int_{Q_\omega} \big(s^3\lambda^4\vartheta^3 |v|^2 +  s\lambda^2\vartheta |\nabla v|^2\big)e^{2s\vartheta} \ {\rm d}x{\rm d}t \\&\quad + c \int_{\partial \Omega \times I}s^3\lambda^4\vartheta^3\left(\left|\nabla_{x, t} v\right|^2+|v|^2\right) e^{2 s \vartheta} {\rm d}x{\rm d}t \\& \quad + c \int_{\Omega}s^3\lambda^4\vartheta^3(x,t_0\pm\zeta)\left(\left|\nabla v\left(x, t_0\pm\zeta\right)\right|^2+\left|v\left(x, t_0\pm\zeta\right)\right|^2\right)e^{2 s \vartheta\left(x, t_0\pm\zeta\right)} {\rm d}x.
\end{align*}
\end{lemma}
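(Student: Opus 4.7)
The plan is to establish the interior Carleman estimate by adapting the conjugation-and-splitting argument that underlies Lemma \ref{lem: Carleman estimate}, but with the regular weight $e^{s\vartheta}$ in place of the singular $e^{s\alpha}$ and without assuming that $v$ vanishes on $\partial\Omega\times I$, so that all parabolic boundary contributions must be tracked and kept on the right-hand side. First, set $w := e^{s\vartheta} v$ and define the conjugated operator $P_s w := e^{s\vartheta} L(e^{-s\vartheta} w) = e^{s\vartheta} L v$. Split $P_s = P_s^{+} + P_s^{-}$ into its formally self-adjoint and skew-adjoint parts in $L^2(Q)$, so that
\[
\int_Q |Lv|^2 e^{2s\vartheta}\,{\rm d}x{\rm d}t = \|P_s^+ w\|_{L^2(Q)}^2 + \|P_s^- w\|_{L^2(Q)}^2 + 2(P_s^+ w, P_s^- w)_{L^2(Q)}
\]
is the starting identity.

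Next, I would evaluate the cross term $(P_s^+ w, P_s^- w)_{L^2(Q)}$ by integration by parts in both space and time. The interior contributions produce the dominant positive quadratic form, of order $s^3\lambda^4\vartheta^3|\nabla\psi|^4|w|^2 + s\lambda^2\vartheta|\nabla\psi|^2|\nabla w|^2$, modulo lower-order remainders in $s$ and $\lambda$ that are absorbed by choosing $\lambda \ge \lambda_0$ and $s \ge s_0$ large; the uniform boundedness of $\vartheta$ and of all its derivatives on $\overline{Q}$ (in contrast to the singular behavior of $\alpha$) greatly simplifies these estimates. Because $v$ is not prescribed on $\partial\Omega\times I$ or at $t=t_0\pm\zeta$, the integrations by parts leave non-vanishing surface terms on these faces. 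Collecting them, using that the resulting coefficients are bounded by multiples of $s^3\lambda^4\vartheta^3$, and transporting them to the right-hand side yields the last two lines of the stated inequality.

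Then, the positive bulk form already controls $s^3\lambda^4\vartheta^3|w|^2 + s\lambda^2\vartheta|\nabla w|^2$ uniformly on $Q\setminus Q_\omega$, because \eqref{d:condition} yields $|\nabla\psi(\cdot,t)| = |\nabla d| \geq \sigma_d > 0$ on $\overline{\Omega\setminus\omega}$. Extending the lower bound to the entire cylinder by adding the same quantity integrated over $Q_\omega$ to both sides accounts for the observation terms on $Q_\omega$ on the right-hand side. The remaining piece $\tfrac{1}{s\vartheta}(|\partial_t v|^2+|Av|^2)$ on the left is recovered from $\|P_s^+ w\|_{L^2(Q)}^2 + \|P_s^- w\|_{L^2(Q)}^2$ by isolating the time derivative and the second-order spatial derivatives of $w$ and using that $\vartheta \geq c > 0$ on $\overline{Q}$; reverting from $w$ to $v$ only contributes factors $e^{\pm s\vartheta}$ that combine cleanly with the weight $e^{2s\vartheta}$ on both sides.

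The main obstacle will be the detailed bookkeeping in the cross-term integration by parts: one must verify that every commutator generated by the $W^{1,\infty}$ coefficients of $A$, every cross term between the temporal and elliptic pieces, and every lower-order contribution in $(s,\lambda)$ is either absorbed by the dominant $s^3\lambda^4$ and $s\lambda^2$ terms (for $\lambda\geq\lambda_0$ and $s\geq s_0$ large) or identified with one of the explicit boundary integrals in the conclusion. Unlike in Lemma \ref{lem: Carleman estimate}, where the singular weight $e^{s\alpha}$ forces all parabolic boundary traces to vanish, the regular weight $e^{s\vartheta}$ leaves them alive, and the novelty is simply that these trace contributions are retained explicitly on the right-hand side.
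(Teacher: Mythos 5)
Your overall skeleton — conjugate with the regular weight ($w=e^{s\vartheta}v$), split the conjugated operator, bound the cross term from below by integration by parts, keep all parabolic boundary traces on the right since nothing vanishes on $\partial\Omega\times I$ or at $t=t_0\pm\zeta$, use $|\nabla d|\ge\sigma_d$ on $\overline{\Omega\setminus\omega}$ and add the $Q_\omega$ contributions to both sides — is exactly the route the paper takes (it first proves the estimate for $v\in C_0^\infty(Q)$ and then observes that for general $v$ the boundary integrals are simply retained; tracking them directly as you do is equivalent).

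There is, however, one concrete gap at the central computation. You commit to the \emph{strict} self-adjoint/skew-adjoint splitting $P_s=P_s^++P_s^-$ and assert that the cross term $(P_s^+w,P_s^-w)$ produces the positive bulk form $s^3\lambda^4\vartheta^3|\nabla\psi|^4|w|^2+s\lambda^2\vartheta|\nabla\psi|^2|\nabla w|^2$ up to lower-order remainders. For the zeroth-order block this is true, but for the gradient block it is not: with the exact symmetric/skew split the two relevant products, schematically $\bigl(-\sum a_{ij}\partial^2_{x_ix_j}w,\,2s\nabla\vartheta\cdot\nabla w\bigr)$ and $\bigl(-\sum a_{ij}\partial^2_{x_ix_j}w,\,s(\mathrm{div\,type})\vartheta\,w\bigr)$, generate contributions $\pm s\lambda^2\vartheta\sigma\sum a_{ij}\partial_{x_i}w\,\partial_{x_j}w$ that cancel, leaving only the nonnegative \emph{directional} term $s\lambda^2\vartheta\bigl|\sum a_{ij}(\partial_{x_i}d)(\partial_{x_j}w)\bigr|^2$, which does not control $|\nabla w|^2$. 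Indeed, in the paper's computation the cross term alone yields the gradient form with a \emph{negative} sign (see \eqref{appendix-estimate-I}), and the full gradient term is recovered only through an additional multiplier step: testing the conjugated equation with $s\lambda^2\vartheta\sigma w$ (the ${\rm J}$-terms), which produces $+\int_Q s\lambda^2\vartheta\sigma\sum a_{ij}(\partial_{x_i}w)(\partial_{x_j}w)$ at the price of $-\int_Q s^3\lambda^4\vartheta^3\sigma^2|w|^2$, cf. \eqref{appendix-important-estimate-plus}, a loss that is then absorbed thanks to the coefficient $3$ in front of the zeroth-order term coming from the cross term. Equivalently, one can use the Fursikov--Imanuvilov-type (non-symmetric) grouping in which the term $2s\lambda^2\vartheta\sigma w$ is placed in the ``transport'' factor, so that its product with the second-order part yields $+2s\lambda^2\vartheta\sigma|\nabla w|^2$ directly. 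Without one of these two devices your left-hand side lacks the $s\lambda^2\vartheta|\nabla v|^2$ term, and with it also the means of absorbing the first-order remainders when you later extract $\tfrac{1}{s\vartheta}|\partial_tv|^2$ and $\tfrac{1}{s\vartheta}|Av|^2$ from $\|P_s^\pm w\|_{L^2(Q)}^2$. The rest of your plan (boundary bookkeeping, absorption for large $\lambda,s$, adding the $Q_\omega$ terms, converting between $w$ and $v$) is sound and matches the paper.
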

\begin{proof}
Consider first the case $v\in C_0^\infty(Q)$. Let $w = ve^{s\vartheta}$ and $\sigma:=\sum_{i, j=1}^n a_{i j}\left(\partial_{x_i} d\right)\left(\partial_{x_j} d\right)$. Then with $w\in C_0^\infty(Q)$,  repeating the argument of \cite[pp. 7-16]{yamamoto2009carleman} leads to
{\small \begin{equation}\label{estimate-w}
\begin{split}
& \int_{Q} s^3 \lambda^4 \vartheta^3 \sigma^2 w^2 {\rm d} x {\rm d} t + \int_{Q} s \lambda^2 \vartheta \sigma \sum_{i, j=1}^n a_{i j}\left(\partial_{x_i} w\right)\left(\partial_{x_j} w\right) {\rm d} x {\rm d} t + \left(\varepsilon-c\lambda^{-1}\right) \int_{Q} \frac{1}{s \vartheta}\left|\partial_t w\right|^2 {\rm d} x {\rm d} t \\
&  \leq  c \int_{Q} |Lv|^2 \mathrm{e}^{2 s \vartheta} {\rm d} x {\rm d} t\!+\!  c\int_{Q} (s\lambda \vartheta\! + \!\lambda^2)|\nabla w|^2 {\rm d} x {\rm d} t\! \\
 & \quad +\! c \int_{Q}\left(s^3 \lambda^3 \vartheta^3+s^2 \lambda^4 \vartheta^2\right) |w|^2 {\rm d} x {\rm d} t \!+ \!c\epsilon\!\int_Q s \lambda^2 \vartheta |\nabla w|^2 \rmd x \rmd t,
\end{split}
\end{equation}}
\noindent with $\epsilon\in(0,\frac14]$ to be chosen later. See Appendix \ref{append} for the details.
Using the positivity of the coefficient matrix $[a_{ij}]_{n\times n}$ and $|\nabla d|\geq \sigma_d>0$ in $\overline{\Omega\backslash\omega}$, cf. \eqref{d:condition}, we obtain
{\footnotesize\begin{equation*}
\begin{aligned}
& \mu^2\sigma_d^4\int_{\Omega\backslash\omega\times I} s^3 \lambda^4 \vartheta^3  |w|^2 {\rm d} x {\rm d} t + (\mu^2\sigma_d^2 - c\epsilon)\int_{\Omega\backslash\omega\times I} s \lambda^2 \vartheta |\nabla w|^2 {\rm d} x {\rm d} t + \left(\varepsilon-c\lambda^{-1}\right) \int_{Q} \frac{1}{s \vartheta}\left|\partial_t w\right|^2 {\rm d} x {\rm d} t \\
\leqslant & c \int_{Q} |Lv|^2 \mathrm{e}^{2 s \vartheta} {\rm d} x {\rm d} t+  c\int_{Q} (s\lambda \vartheta + \lambda^2)|\nabla w|^2 {\rm d} x {\rm d} t+ c \int_{Q}\left(s^3 \lambda^3 \vartheta^3+s^2 \lambda^4 \vartheta^2\right) |w|^2 {\rm d} x {\rm d} t +  \!c\epsilon\!\int_{Q_\omega} s \lambda^2 \vartheta |\nabla w|^2 \rmd x \rmd t
\end{aligned}
\end{equation*}}
Fix $\epsilon = \min(\frac{\mu^2\sigma_d^2}{2c},\frac14)$ and take $\lambda$ large enough such that $\epsilon - c\lambda^{-1}\geq c_{\epsilon}>0$. By supplementing the region $(\Omega\backslash\omega) \times I$ into $Q$ and taking $s>0$ large enough, we get
\begin{equation*}
\begin{aligned}
& \int_{Q} s^3 \lambda^4 \vartheta^3  |w|^2 {\rm d} x {\rm d} t + \int_{Q} s \lambda^2 \vartheta |\nabla w|^2 {\rm d} x {\rm d} t +  \int_{Q} \frac{1}{s \vartheta}\left|\partial_t w\right|^2 {\rm d} x {\rm d} t \\
\leqslant&  c \int_{Q} |Lv|^2 \mathrm{e}^{2 s \vartheta} {\rm d} x {\rm d} t + c\int_{Q_\omega} s^3\lambda^4\vartheta^3 |w|^2 +  s\lambda^2\vartheta |\nabla w|^2 \ {\rm d}x{\rm d}t,
\end{aligned}
\end{equation*}
Using the definition of $w = ve^{s\vartheta}$ and the identity $Av = Lv - \partial_t v$, we can directly derive the desired estimate for the case $v \in C_0^{\infty}(Q)$. If $v \notin C_0^{\infty}(Q)$, the boundary integrals $v(\cdot, t_0 \pm \zeta)$ and $\nabla v(\cdot, t_0 \pm \zeta)$ over $\Omega$, and $v$ and $\nabla_{x,t} v$ on the parabolic boundary $\partial \Omega \times I$ should be retained in the formula. Hence the desired estimate follows.
\end{proof}

\subsection{Conditional stability}
Now we establish two useful conditional stability estimates for the concerned inverse source problem under the following mild regularity assumption.
\begin{assumption}\label{ass:reg}
    $f\in L^2(\Omega)$ and $R\in W^{1,\infty}(I;L^\infty(\Omega))$, and the condition $R(x,t_0) \ge r_0 >0$ holds in $\Omega$.
\end{assumption}

First, we state the following Lipschitz-type stability.
\begin{theorem}\label{thm: Lip-stab}
Let Assumption \ref{ass:reg} hold.	Let $u\in H^2(I;L^2(\Omega))\cap H^1(I;H^2(\Omega)\cap H_0^1(\Omega))$ be the solution to problem \eqref{eqn:gov}. Then there exists a constant $c>0$, depending on $\omega,t_0,\zeta,\Omega, R$ and $A$, such that
\begin{equation*}
\|f\|_{L^2(\Omega)}\leq c(\|\partial_tu\|_{L^2(Q_\omega)} + \|Au(t_0)\|_{L^2(\Omega)}).
\end{equation*}
\end{theorem}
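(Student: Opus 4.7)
The plan is to reduce the $L^2$ control of $f$ to a weighted single-time estimate on $v := \partial_t u$. Differentiating \eqref{eqn:gov} in time and using that $f$ is independent of $t$ gives $L v = (\partial_t R)\,f$ in $Q$, and the zero boundary data for $u$ is inherited by $v$, placing us precisely in the setting of Lemma \ref{lem: Carleman estimate} applied to $v$. Evaluating \eqref{eqn:gov} at $t=t_0$ and dividing by $R(\cdot,t_0)\geq r_0>0$ (Assumption \ref{ass:reg}) yields the pointwise identity $f(x)=R(x,t_0)^{-1}\bigl(v(x,t_0)+Au(x,t_0)\bigr)$, whence
\[
\|f\|_{L^2(\Omega)}^2 \leq c\bigl(\|v(t_0)\|_{L^2(\Omega)}^2 + \|Au(t_0)\|_{L^2(\Omega)}^2\bigr),
\]
so it suffices to control $\|v(t_0)\|_{L^2(\Omega)}$ by the two observables.

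To extract a weighted time slice at $t=t_0$ from the Carleman left-hand side, I would use the asymptotic $e^{2s\alpha(\cdot,t)}\to 0$ as $t\to t_0-\zeta$ from Lemma \ref{lem:point-estimate} to write
\[
\int_\Omega |v(t_0)|^2 e^{2s\alpha(t_0)}\rmd x = \int_{t_0-\zeta}^{t_0}\!\!\int_\Omega \partial_t\bigl(|v|^2 e^{2s\alpha}\bigr)\rmd x\rmd t.
\]
Expanding the time derivative produces $2v\,\partial_t v\,e^{2s\alpha}$ and $2s(\partial_t\alpha)|v|^2 e^{2s\alpha}$. Applying Young's inequality to the first, and using $|\partial_t\alpha|\le c\zeta\varphi^2$ with $\varphi\ge\zeta^{-2}$ (Lemma \ref{lem:point-estimate}) in the second, each summand is dominated for $s,\lambda$ sufficiently large by the Carleman LHS integrands $\frac{1}{s\varphi}|\partial_t v|^2 e^{2s\alpha}$ and $s^3\lambda^4\varphi^3|v|^2 e^{2s\alpha}$. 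Invoking Lemma \ref{lem: Carleman estimate} for $v$, substituting $Lv=(\partial_t R)f$, and bounding the smooth weight $s^3\lambda^4\varphi^3 e^{2s\alpha}$ on $Q_\omega$ by a constant depending only on $s,\lambda,\zeta$, I would obtain
\[
\int_\Omega |v(t_0)|^2 e^{2s\alpha(t_0)}\rmd x \leq c\!\int_Q |f|^2 e^{2s\alpha}\rmd x\rmd t + c\|\partial_t u\|_{L^2(Q_\omega)}^2.
\]

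The main obstacle is the final absorption. Substituting $|f|^2\leq c(|v(t_0)|^2+|Au(t_0)|^2)$ and the temporal monotonicity $e^{2s\alpha(x,t)}\leq e^{2s\alpha(x,t_0)}$ (a consequence of the signs in Lemma \ref{lem:point-estimate}) produces
\[
\int_\Omega |v(t_0)|^2 e^{2s\alpha(t_0)}\rmd x \leq cK(s)\!\!\int_\Omega \bigl(|v(t_0)|^2+|Au(t_0)|^2\bigr) e^{2s\alpha(t_0)}\rmd x + c\|\partial_t u\|_{L^2(Q_\omega)}^2,
\]
with $K(s):=\sup_{x\in\Omega}\int_I e^{2s[\alpha(x,t)-\alpha(x,t_0)]}\rmd t$. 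The naive bound $K(s)\leq |I|$ is too weak (it would force an unwanted smallness condition on $\zeta$), so the decisive idea is that $t\mapsto\alpha(x,t)$ attains a unique, non-degenerate maximum at $t=t_0$ uniformly in $x\in\overline\Omega$; a Laplace/Gaussian estimate then yields $K(s)\to 0$ as $s\to\infty$. Fixing $s,\lambda$ large enough that $cK(s)<\tfrac12$ absorbs the $|v(t_0)|^2$ term, and removing the weight via the uniform lower bound $e^{2s\alpha(\cdot,t_0)}\ge \exp\bigl(-2s\zeta^{-2}e^{2\lambda\|d\|_{C(\overline\Omega)}}\bigr)>0$ yields $\|v(t_0)\|_{L^2(\Omega)}^2 \leq c\bigl(\|\partial_t u\|_{L^2(Q_\omega)}^2 + \|Au(t_0)\|_{L^2(\Omega)}^2\bigr)$. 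Combined with the reduction of the first paragraph, this proves the claimed Lipschitz stability, with the constant $c$ depending on $\omega,t_0,\zeta,\Omega,R,A$ through the fixed choices of $s,\lambda$ and the Carleman and Laplace constants.
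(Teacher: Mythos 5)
Your proposal is correct, and its skeleton coincides with the paper's proof: set $v=\partial_t u$, use $\lim_{t\to t_0-\zeta}e^{2s\alpha}=0$ to write $\int_\Omega|v(t_0)|^2e^{2s\alpha(t_0)}\,{\rm d}x$ as a time integral of $\partial_t(|v|^2e^{2s\alpha})$, dominate the resulting terms by the Carleman left-hand side via Lemma \ref{lem:point-estimate} and Young's inequality, invoke Lemma \ref{lem: Carleman estimate} with $Lv=(\partial_t R)f$, and finally absorb the $f$-term using $R(t_0)\ge r_0$. The one place where you genuinely deviate is the absorption mechanism. The paper keeps track of the gain coming from $\varphi^{-1}\le\zeta^2$ in the Young step, so that the whole time-slice term is bounded by $cs^{-1}$ times the Carleman left-hand side; then the crude bound $\int_I e^{2s\alpha}\,{\rm d}t\le |I|\,e^{2s\alpha(t_0)}$ (your ``naive'' $K(s)\le|I|$) already suffices, since the prefactor $cs^{-1}$ is made smaller than $\tfrac12$ by taking $s$ large --- no smallness of $\zeta$ is needed. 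You instead discard this $s^{-1}$ gain and recover smallness from the Laplace-type estimate $K(s)=\sup_x\int_I e^{2s[\alpha(x,t)-\alpha(x,t_0)]}\,{\rm d}t\to0$; this is legitimate, since $\alpha(x,t)-\alpha(x,t_0)=(e^{\lambda d(x)}-e^{2\lambda\|d\|_{C(\overline\Omega)}})\bigl(\tfrac{1}{\zeta^2-(t-t_0)^2}-\tfrac{1}{\zeta^2}\bigr)\le -c_0(t-t_0)^2\zeta^{-4}$ with $c_0=e^{2\lambda\|d\|_{C(\overline\Omega)}}-e^{\lambda\|d\|_{C(\overline\Omega)}}>0$ uniformly in $x$, giving $K(s)=O(s^{-1/2})$, and the constant multiplying $K(s)$ (Carleman constant, $\|\partial_t R\|_{L^\infty}$, $r_0^{-1}$, fixed $\lambda$) is independent of $s$. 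So both routes close the argument for $s$ large; the paper's bookkeeping of the $s^{-1}$ factor is slightly more elementary, while your Laplace argument is a standard and perfectly valid alternative (and shows the ``naive'' bound is only too weak because you had already given away the $s^{-1}$). Your preliminary unweighted reduction $\|f\|_{L^2(\Omega)}\le c(\|v(t_0)\|_{L^2(\Omega)}+\|Au(t_0)\|_{L^2(\Omega)})$ versus the paper's weighted triangle inequality is an immaterial reordering.
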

\begin{proof}
Let $v = \partial_t u$. Then it satisfies $v=0$ on $\partial\Omega\times I$ and
\begin{equation}\label{eqn:Lip-gov}
\left\{
\begin{aligned}
 \partial_t v + A v &= \partial_t R f, \quad \text{in}\ Q,\\
 v(t_0) &= R(t_0)f - Au(t_0), \quad \text{in}\ \Omega.
\end{aligned}
\right.
\end{equation}
Using the triangle inequality for the second line in \eqref{eqn:Lip-gov}, we get
\begin{align*}
\int_\Omega |R(t_0)f|^2 e^{2s\alpha(t_0)} \ {\rm d}x
&\leq c\bigg(\int_\Omega |Au(t_0)|^2 e^{2s\alpha(t_0)} \ {\rm d}x
+ \int_\Omega |v(t_0)|^2 e^{2s\alpha(t_0)} \ {\rm d}x\bigg)=:c({\rm I} + {\rm II}).
\end{align*}
It remains to estimate ${\rm II}$. Since the property $\lim_{t\to t_0-\zeta} e^{2s\alpha(t)} = 0$ holds in $\Omega$, it follows that
\begin{equation*}
   {\rm II} = \int_\Omega \int_{t_0-\zeta}^{t_0} \partial_t \big(|v|^2 e^{2s\alpha}\big) \ {\rm d}x{\rm d}t = 2 \int_\Omega\int_{t_0-\zeta}^{t_0} (v \partial_t v + s\partial_t\alpha |v|^2)e^{2s\alpha} \ {\rm d}x{\rm d}t.
\end{equation*}
Then for a large $s>0$, by Lemma \ref{lem:point-estimate} and  Young's inequality $ab\leq \frac{\epsilon^{-1}}{2} a^2 + \frac{\epsilon}{2} b^2$ (for every $a,b,\epsilon\in\mathbb{R}_+$), there holds
	\begin{align*}
		{\rm II}
		&\leq c \int_Q\left(\frac{1}{s^2\varphi^2}|\partial_t v|^2 + s^2\varphi^2|v|^2
		+ s\varphi^2 |v|^2\right)e^{2s\alpha} \ {\rm d}x{\rm d}t\\
		&\leq cs^{-1}\int_Q \left(\frac{1}{s\varphi}|\partial_t v|^2 + s^3\lambda^4\varphi^3|v|^2\right)e^{2s\alpha} \ {\rm d}x{\rm d}t.
	\end{align*}
By Lemma \ref{lem: Carleman estimate} and Assumption \ref{ass:reg}, we obtain
\begin{align*}
{\rm II}
&\leq cs^{-1}\int_Q |\partial_t R|^2 |f|^2e^{2s\alpha} \ {\rm d}x{\rm d}t
+ {c\int_{Q_\omega} s^3\lambda^4\varphi^3 |v|^2 e^{2s\alpha} \ {\rm d}x{\rm d}t} \\
&\leq cs^{-1}\int_Q |f|^2e^{2s\alpha(t_0)}\ {\rm d}x + ce^{cs}\|v\|_{L^2(Q_\omega)}^2.
\end{align*}
This and the positivity condition in Assumption \ref{ass:reg} imply
\begin{equation*}
    \int_\Omega |f|^2 e^{2s\alpha(t_0)} \ {\rm d}x \leq cs^{-1}\int_Q |f|^2e^{2s\alpha(t_0)}\ {\rm d}x + ce^{cs}(\|v\|_{L^2(Q_\omega)}^2 + \| Au(t_0)\|_{L^2(\Omega)}^2).
\end{equation*}
Let $s>0$ be large enough such that $cs^{-1} \leq \frac12$. Thus with the definition of $v$, we arrive at the desired stability estimate.
\end{proof}

The next theorem presents a conditional  H\"{o}lder stability, which does not require the knowledge of the boundary condition of the solution $u$.
\begin{theorem}\label{thm:hold-stab}
Let Assumption \ref{ass:reg} hold. Let $u\in H^2(I;H^1(\Omega))\cap H^1(I;H^2(\Omega))$ be the solution to equation \eqref{eqn:gov} with a uniform a priori bound
\begin{equation}\label{ineq:priori-bound-u}
\|u\|_{H^2(I;H^1(\Omega))} +  \|u\|_{H^1(I;H^2(\Omega))}\leq M.
\end{equation}
Then for any $\Omega_0\subset\subset \Omega$ (i.e., $\overline{\Omega_0} \subset \Omega$), there exists constants $c>0$ and $\theta\in (0,1)$,
depending on $\Omega_0,\omega,t_0,\zeta,\Omega, R$ and $A$, such that
\begin{equation*}
\|f\|_{L^2(\Omega_0)}\leq cM^{1-\theta}(\|Au(t_0)\|_{L^2(\Omega)} + \|\partial_tu\|_{L^2(I; H^1(\omega))})^\theta.
\end{equation*}
\end{theorem}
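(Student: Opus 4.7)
The plan is to mirror the argument of Theorem~\ref{thm: Lip-stab} while replacing the global Carleman estimate of Lemma~\ref{lem: Carleman estimate} by the interior one in Lemma~\ref{lem:local-car-estimate}, whose regular weight $\vartheta=e^{\lambda\psi}$ is bounded above and below by positive constants and does not demand a homogeneous boundary condition on $v:=\partial_t u$. As in the proof of Theorem~\ref{thm: Lip-stab}, $v$ satisfies $\partial_t v+Av=(\partial_tR)f$ in $Q$ and the pointwise identity $R(t_0)f=v(t_0)+Au(t_0)$ in $\Omega$. Using $R(t_0)\geq r_0>0$ (Assumption~\ref{ass:reg}), the triangle inequality, and $\vartheta(t_0)\leq e^{\lambda\|d\|_{C(\overline\Omega)}}$, it suffices to estimate
\begin{equation*}
{\rm II}:=\int_\Omega |v(t_0)|^2 e^{2s\vartheta(t_0)}\,{\rm d}x,
\end{equation*}
since the $Au(t_0)$ contribution is bounded by $c\,e^{\kappa_1 s}\|Au(t_0)\|_{L^2(\Omega)}^2$ with $\kappa_1:=2e^{\lambda\|d\|_{C(\overline\Omega)}}$.

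To control ${\rm II}$, I would integrate by parts in $t$:
\begin{equation*}
{\rm II}=\int_\Omega |v(t_0-\zeta)|^2 e^{2s\vartheta(t_0-\zeta)}\,{\rm d}x+\int_{\Omega\times(t_0-\zeta,t_0)}\partial_t\bigl(|v|^2 e^{2s\vartheta}\bigr)\,{\rm d}x\,{\rm d}t.
\end{equation*}
By Lemma~\ref{lem:point-estimate} and the choice of $\beta$, $\vartheta(\cdot,t_0-\zeta)\leq c_0:=e^{\lambda(\|d\|_{C(\overline\Omega)}-\beta\zeta^2)}<1$, so the first integral is bounded by $c\,e^{2sc_0}M^2$ via \eqref{ineq:priori-bound-u}. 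For the second, expanding $\partial_t(|v|^2 e^{2s\vartheta})=2v\partial_tv\,e^{2s\vartheta}+2s(\partial_t\vartheta)|v|^2 e^{2s\vartheta}$, Young's inequality with parameter $s^2\vartheta^2$, together with $|\partial_t\vartheta|\leq c\lambda\zeta\vartheta$ and the uniform lower bound $\vartheta\geq\vartheta_{\min}>0$ (so $\vartheta^{-1}$ is bounded), yields control by $cs^{-1}$ times the left-hand side of Lemma~\ref{lem:local-car-estimate}. Feeding $v$ into that lemma with $|Lv|^2\leq c|f|^2$, its right-hand side splits as follows: the source integral, using $\vartheta(x,t)\leq\vartheta(x,t_0)$, is bounded by $c\int_\Omega|f|^2 e^{2s\vartheta(t_0)}\,{\rm d}x$; the $Q_\omega$ terms by $c\,e^{\kappa_1 s}\|\partial_tu\|_{L^2(I;H^1(\omega))}^2$; and the parabolic-boundary contributions on $\partial\Omega\times I$ and $\Omega\times\{t_0\pm\zeta\}$ by $c\,e^{\kappa_2 s}M^2$ with $\kappa_2:=2$, because $\vartheta\leq 1$ on $\partial\Omega\times I$ and $\vartheta(\cdot,t_0\pm\zeta)\leq c_0<1$, while the trace theorem combined with \eqref{ineq:priori-bound-u} controls the relevant norms of $v$ and $\nabla v$ there by $cM$. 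The $cs^{-1}\int_\Omega|f|^2 e^{2s\vartheta(t_0)}\,{\rm d}x$ term is absorbed into the left-hand side for $s$ large, giving
\begin{equation*}
\int_\Omega|f|^2 e^{2s\vartheta(t_0)}\,{\rm d}x\leq c\,e^{\kappa_1 s}\delta_{\rm obs}^2+c\,e^{\kappa_2 s}M^2,\quad \delta_{\rm obs}:=\|Au(t_0)\|_{L^2(\Omega)}+\|\partial_tu\|_{L^2(I;H^1(\omega))}.
\end{equation*}

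The H\"{o}lder estimate then follows by localizing to $\Omega_0$ and optimizing in $s$. Since $\overline{\Omega_0}\subset\Omega$, there is $d_0>0$ with $d\geq d_0$ on $\overline{\Omega_0}$, hence $e^{2s\vartheta(x,t_0)}\geq e^{\kappa s}$ on $\Omega_0$ with $\kappa:=2e^{\lambda d_0}$, and division yields
\begin{equation*}
\|f\|_{L^2(\Omega_0)}^2\leq c\,e^{(\kappa_1-\kappa)s}\delta_{\rm obs}^2+c\,e^{-(\kappa-\kappa_2)s}M^2,
\end{equation*}
where $\kappa_1-\kappa>0$ (since $\|d\|_{C(\overline\Omega)}>d_0$) and $\kappa-\kappa_2=2(e^{\lambda d_0}-1)>0$ (since $d_0>0$). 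Minimizing over $s\geq s_0$, and using the trivial bound $\|f\|_{L^2(\Omega_0)}\leq cM$ from \eqref{ineq:priori-bound-u} when $\delta_{\rm obs}$ is comparable to $M$, produces the stated H\"{o}lder bound with $\theta:=(\kappa-\kappa_2)/(\kappa_1-\kappa_2)\in(0,1)$. The main obstacle is precisely this strict separation of exponential rates: one must verify that every $M^2$-contribution arising from Lemma~\ref{lem:local-car-estimate}, both on $\partial\Omega\times I$ and at $t=t_0\pm\zeta$, carries only the weight $e^{2s}$, strictly smaller than the LHS weight $e^{2s\,e^{\lambda d_0}}$ available on $\Omega_0$. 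This separation is exactly the geometric content of $d|_{\partial\Omega}=0$, $d\geq d_0>0$ on $\overline{\Omega_0}$, and the choice of $\beta$ making $\psi(\cdot,t_0\pm\zeta)<0$, and it is the step where all the special structure of the weight $\vartheta$ is used.
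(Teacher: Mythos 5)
Your proposal is correct and follows essentially the same route as the paper: set $v=\partial_t u$, apply the interior Carleman estimate of Lemma~\ref{lem:local-car-estimate}, recover $\int_\Omega|v(t_0)|^2e^{2s\vartheta(t_0)}\,{\rm d}x$ by integrating $\partial_t(|v|^2e^{2s\vartheta})$ in time, absorb the source term via $\vartheta(x,t)\le\vartheta(x,t_0)$ for large $s$, and exploit the weight gap between $\Omega_0$ (where $\vartheta(t_0)>1$) and the parabolic boundary terms (where $\vartheta\le 1$) before optimizing in $s$. The only differences are cosmetic: you bound the $t=t_0-\zeta$ term directly by $cM^2$ with weight $e^{2sc_0}$ instead of folding it into the term IV of the Carleman estimate, and you make the exponents and the choice of $\theta$ explicit where the paper uses $\sigma_0,\sigma_1$ and a shift in $s$.
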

\begin{proof}
Let $v = \partial_t u$. Then it satisfies
\begin{equation}\label{eqn:holder-gov}
\left\{
\begin{aligned}
 \partial_t v + A v &= \partial_t R f, \quad \text{in}\ Q,\\
 v(t_0) &= R(t_0)f - Au(t_0), \quad \text{in}\ \Omega.
\end{aligned}
\right.
\end{equation}
Using Lemma \ref{lem: Carleman estimate} for the governing equation in \eqref{eqn:holder-gov}, we obtain
\begin{equation*}
   \int_Q \bigg(\frac{1}{s\vartheta}|\partial_t v|^2
+ s^3\lambda^4\vartheta^3 |v|^2\bigg) e^{2s\vartheta} \ {\rm d}x{\rm d}t \leq {\rm I} + {\rm II} + {\rm III} + {\rm IV},
\end{equation*}
with the four terms given by
\begin{align*}
& {\rm I}= c \int_Q |\partial_t R|^2|f|^2 e^{2s\vartheta} \ {\rm d}x{\rm d}t, \\
& {\rm II} = c\int_{Q_\omega} \big(s^3\lambda^4\vartheta^3 |v|^2 +  s\lambda^2\vartheta |\nabla v|^2\big)e^{2s\vartheta} \ {\rm d}x{\rm d}t, \\
& {\rm III} = c \int_{\partial \Omega \times I}s^3\lambda^4\vartheta^3(\left|\nabla_{x, t} v\right|^2+|v|^2) e^{2 s \vartheta} {\rm d}\sigma{\rm d}t, \\
& {\rm IV} = c \int_{\Omega}s^3\lambda^4\vartheta^3\left(x, t_0\pm\zeta\right)(\left|\nabla v\left(x, t_0\pm\zeta\right)\right|^2+\left|v\left(x, t_0\pm\zeta\right)\right|^2)e^{2 s \vartheta\left(x, t_0\pm\zeta\right)} {\rm d}x.
\end{align*}
By the argument in Theorem \ref{thm: Lip-stab} and Young's inequality, we arrive at the following estimate
\begin{align*}
\int_\Omega |v(t_0)|^2 e^{2s\vartheta(t_0)} \ {\rm d}x
&=  \int_\Omega \int_{t_0-\zeta}^{t_0} \partial_t \big(|v|^2 e^{2s\vartheta}\big) \ {\rm d}x{\rm d}t + \int_\Omega |v(t_0-\zeta)|^2 e^{2s\vartheta(t_0-\zeta)} \ {\rm d}x \\
& \leq \int_\Omega \int_{t_0-\zeta}^{t_0} (2v\partial_t v + 2s\partial_t\vartheta |v|^2)e^{2s\vartheta} \ {\rm d}x{\rm d}t + s^{-1}{\rm IV} \\
& \leq cs^{-1}\int_Q \big(\frac{1}{s\vartheta}|\partial_t v|^2
+ s^3\lambda^4\vartheta^3 |v|^2\big) e^{2s\vartheta} \ {\rm d}x{\rm d}t + s^{-1}{\rm IV} \\& \leq cs^{-1}( {\rm I} + {\rm II} + {\rm III} + {\rm IV} ).
\end{align*}
This estimate, the second line in \eqref{eqn:holder-gov} and the triangle inequality lead to
\begin{align*}
    \int_\Omega |R(t_0)|^2|f|^2e^{2s\vartheta(t_0)} \ {\rm d}x &\leq   c\left(\int_\Omega |v(t_0)|^2 e^{2s\vartheta(t_0)} \ {\rm d}x + \int_\Omega |Au(t_0)|^2e^{2s\vartheta(t_0)} \ {\rm d}x\right) \\
    & \leq cs^{-1}( {\rm I} + {\rm II} + {\rm III} + {\rm IV} ) + c\int_\Omega |Au(t_0)|^2e^{2s\vartheta(t_0)} \ {\rm d}x.
\end{align*}
Under Assumption \ref{ass:reg}, there holds for a sufficiently large $s>0$,
\begin{equation*}
    \int_{\Omega}|f|^2e^{2s\vartheta(t_0)}\ {\rm d}x  \leq cs^{-1}( {\rm II} + {\rm III} + {\rm IV} ) + c\int_\Omega |Au(t_0)|^2e^{2s\vartheta(t_0)} \ {\rm d}x.
\end{equation*}
By the choice of $\beta>0$, we have $d(x) - \beta\zeta^2<0$. Then with  $\sigma_0:=\min_{x\in\overline{\Omega_0}}\vartheta(t_0)>1$, we have
\begin{equation*}
   \sigma_1:=\max\Big\{\max_{(x,t)\in \partial\Omega\times I}\vartheta,\ \max_{x\in \Omega}\vartheta(t_0-\zeta) \Big\}\leq 1 < \sigma_0.
\end{equation*}
Thus by the estimate \eqref{ineq:priori-bound-u} and the trace inequality, there holds
\begin{equation*}
    cs^{-1}{\rm III} + cs^{-1}{\rm IV} \leq cs^2e^{2s\sigma_1}M^2.
\end{equation*}
Consequently, the following estimate holds for some $s_0>0$,
\begin{equation*}
    e^{2s\sigma_0}\|f\|_{L^2(\Omega_0)}^2 \leq cs^2e^{2s\sigma_1}M^2 + ce^{cs}\big(\|Au(t_0)\|_{L^2(\Omega)}^2 + \|v\|^2_{L^2(I; H^1(\omega))} \big), \quad\forall s>s_0.
\end{equation*}
Let $\mu = \sigma_0-\sigma_1 >0$. Shifting $s$ by $s_0$ and noting the inequality $\sup_{s>0}(s+s_0)^2e^{-c(s+s_0)}<\infty$ yield
\begin{equation*}
    \|f\|_{L^2(\Omega_0)} \leq ce^{-c\mu s}M + ce^{cs}\big(\|Au(t_0)\|_{L^2(\Omega)} + \|v\|_{L^2(I; H^1(\omega))}\big), \quad\forall s>0.
\end{equation*}
When $M \leq \|Au(t_0)\|_{L^2(\Omega)} + \|v\|_{L^2(I; H^1(\omega))}$, the desired estimate holds directly. Otherwise, balancing these two terms and using the definition of $v$ give the stability estimate.
\end{proof}

\subsection{Stability estimate with perturbation}

The next result gives the stability estimate in the presence of a perturbation, which will be needed in the error analysis of the numerical scheme in Section \ref{sec:numer}.
\begin{theorem}\label{thm:Lip-perturb-stability}
Let Assumption \ref{ass:reg} hold and $G\in H^1(I;L^2(\Omega))$.
Let $u\in H^2(I;L^2(\Omega))\cap H^1(I;H^2(\Omega)\cap H_0^1(\Omega))$ be the solution to problem \eqref{eqn:gov} with the source $F=Rf + G$. Then there exists a constant $c>0$, depending on $\omega,t_0,\zeta,\Omega, R$
and $A$, such that
\begin{equation*}
	\|f\|_{L^2(\Omega)}\le c\big(\|\partial_tu\|_{L^2(Q_\omega)} + \|Au(t_0)\|_{L^2(\Omega)}
	+ \|G\|_{H^1(I;L^2(\Omega))}\big).
\end{equation*}
\end{theorem}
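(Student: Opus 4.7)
The proof will closely mirror that of Theorem \ref{thm: Lip-stab}, with the modification that the perturbation $G$ enters in two places after differentiating the equation in time. My plan is to set $v = \partial_t u$, which satisfies the homogeneous Dirichlet condition $v = 0$ on $\partial \Omega \times I$ (since $u$ does) together with
\begin{equation*}
\left\{
\begin{aligned}
\partial_t v + A v &= \partial_t R\, f + \partial_t G, \quad \text{in } Q, \\
v(t_0) &= R(t_0) f + G(t_0) - Au(t_0), \quad \text{in } \Omega.
\end{aligned}
\right.
\end{equation*}
From the second line, by the triangle inequality, I will bound
\begin{equation*}
\int_\Omega |R(t_0) f|^2 e^{2s\alpha(t_0)} \, {\rm d}x \leq c \left( \int_\Omega |v(t_0)|^2 e^{2s\alpha(t_0)} \, {\rm d}x + \int_\Omega |Au(t_0)|^2 e^{2s\alpha(t_0)} \, {\rm d}x + \int_\Omega |G(t_0)|^2 e^{2s\alpha(t_0)} \, {\rm d}x \right),
\end{equation*}
so the task is to control the $v(t_0)$ term and the new $G(t_0)$ term.

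For the $v(t_0)$ term, I will repeat the argument in Theorem \ref{thm: Lip-stab} verbatim: since $\lim_{t\to t_0-\zeta} e^{2s\alpha} = 0$, I can write $|v(t_0)|^2 e^{2s\alpha(t_0)}$ as a time integral of $\partial_t(|v|^2 e^{2s\alpha})$, expand, and apply Young's inequality together with Lemma \ref{lem:point-estimate} to produce
\begin{equation*}
\int_\Omega |v(t_0)|^2 e^{2s\alpha(t_0)} \, {\rm d}x \leq c s^{-1} \int_Q \left( \frac{1}{s\varphi} |\partial_t v|^2 + s^3 \lambda^4 \varphi^3 |v|^2 \right) e^{2s\alpha} \, {\rm d}x {\rm d}t.
\end{equation*}
Then I will invoke the global Carleman estimate (Lemma \ref{lem: Carleman estimate}) with $Lv = \partial_t R\, f + \partial_t G$, producing the right-hand side
\begin{equation*}
c \int_Q (|\partial_t R|^2 |f|^2 + |\partial_t G|^2) e^{2s\alpha} \, {\rm d}x {\rm d}t + c \int_{Q_\omega} s^3 \lambda^4 \varphi^3 |v|^2 e^{2s\alpha} \, {\rm d}x {\rm d}t.
\end{equation*}
Using $e^{2s\alpha} \leq e^{2s\alpha(t_0)}\leq 1$ and Assumption \ref{ass:reg}, the $|f|^2$-piece becomes $c s^{-1} \int_\Omega |f|^2 e^{2s\alpha(t_0)} {\rm d}x$ (absorbable on the left for large $s$ together with the $R(t_0)\ge r_0$ lower bound), the $|\partial_t G|^2$ piece is bounded by $c\|G\|_{H^1(I;L^2(\Omega))}^2$, and the $\omega$-piece by $ce^{cs}\|\partial_t u\|_{L^2(Q_\omega)}^2$.

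The only genuinely new ingredient is the pointwise-in-time term $\int_\Omega |G(t_0)|^2 e^{2s\alpha(t_0)} {\rm d}x$, which I will control via the continuous embedding $H^1(I;L^2(\Omega)) \hookrightarrow C(\overline{I};L^2(\Omega))$, giving $\|G(t_0)\|_{L^2(\Omega)} \leq c \|G\|_{H^1(I;L^2(\Omega))}$, with the extra factor $e^{2s\alpha(t_0)}\le 1$ absorbed. Collecting all the pieces, fixing $\lambda$ as in Lemma \ref{lem: Carleman estimate} and taking $s$ large enough to absorb the $cs^{-1}\int_\Omega |f|^2 e^{2s\alpha(t_0)} \, {\rm d}x$ term into the left-hand side, the desired inequality follows since $v = \partial_t u$. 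I do not expect any serious obstacle here: the argument is essentially a bookkeeping extension of Theorem \ref{thm: Lip-stab}, with the embedding $H^1(I;L^2(\Omega)) \hookrightarrow C(\overline I;L^2(\Omega))$ being the only non-routine check.
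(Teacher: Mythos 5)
Your proposal is correct and follows essentially the same route as the paper: differentiate in time, apply the triangle inequality to the identity for $v(t_0)$, control the new $G(t_0)$ term via the embedding $H^1(I;L^2(\Omega))\hookrightarrow C(\overline I;L^2(\Omega))$, and reuse the Theorem \ref{thm: Lip-stab} argument with Lemma \ref{lem: Carleman estimate} applied to $Lv=\partial_tR\,f+\partial_tG$, absorbing the $cs^{-1}\int_\Omega|f|^2e^{2s\alpha(t_0)}\,{\rm d}x$ term for large $s$. No gaps.
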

\begin{proof}
The argument is similar to Theorem \ref{thm: Lip-stab}, and hence we only sketch the proof. Let $v=\partial_t u$. Then it satisfies $v=0$ on $\partial\Omega\times I$ and
\begin{equation}\label{eqn:Lip-pertub-gov}
\left\{
\begin{aligned}
& \partial_t v + A v = \partial_t R f + \partial_t G, && \text{in}\ Q,\\
& v(t_0) = R(t_0)f + G(t_0) - Au(t_0), && \text{in}\ \Omega.
\end{aligned}
\right.
\end{equation}
Thus by the continuous embedding $H^1(I)\hookrightarrow C(\overline I)$, we obtain
\begin{align*}
&\int_\Omega |R(t_0)f|^2 e^{2s\alpha(t_0)} \ {\rm d}x
\\ \leq & c\bigg(\int_\Omega |Au(t_0)|^2 e^{2s\alpha(t_0)} \ {\rm d}x
+ \int_\Omega |G(t_0)|^2 e^{2s\alpha(t_0)} \ {\rm d}x + \int_\Omega |v(t_0)|^2 e^{2s\alpha(t_0)} \ {\rm d}x\bigg) \\
\leq & c\big(\|Au(t_0)\|_{L^2(\Omega)} + \|G\|_{H^1(I;L^2(\Omega))}^2\big) + c\int_\Omega |v(t_0)|^2 e^{2s\alpha(t_0)} \ {\rm d}x.
\end{align*}
By repeating the argument in Theorem \ref{thm: Lip-stab}, for the term ${\rm I}:= \int_\Omega |v(t_0)|^2 e^{2s\alpha(t_0)} \ {\rm d}x$, we get
\begin{align*}
{\rm I} &\leq cs^{-1}\int_Q |\partial_t R|^2|f|^2e^{2s\alpha}\ {\rm d}x{\rm d}t + cs^{-1}\int_Q |\partial_t G|^2 e^{2s\alpha}\ {\rm d}x{\rm d}t+ c\int_{Q_\omega} s^3\lambda^4\varphi^3 |v|^2 e^{2s\alpha} \ {\rm d}x{\rm d}t   \\& \leq cs^{-1}\int_\Omega |f|^2e^{2s\alpha(t_0)}\ {\rm d}x +  ce^{cs}(\|G\|_{H^1(I,L^2(\Omega))} + \|v\|_{L^2(Q_\omega)}^2).
\end{align*}
By taking $s>0$ large enough and noting the positivity condition in Assumption \ref{ass:reg}, we arrive at the desired estimate.
\end{proof}

The following theorem states the H\"{o}lder stability with a perturbation term $G$. The proof is similar to Theorems \ref{thm:hold-stab} and \ref{thm:Lip-perturb-stability}, and hence omitted.
\begin{theorem}\label{thm:hold-perturb-stab}
Let Assumption \ref{ass:reg} hold and $G\in H^1(I;L^2(\Omega))$. Let $u\in H^2(I;H^1(\Omega))\cap H^1(I;H^2(\Omega))$ be the solution to problem \eqref{eqn:gov} with the source $F = Rf + G$. Moreover, assume that the following a priori bound holds
\begin{equation*}
\|u\|_{H^2(I;H^1(\Omega))} +  \|u\|_{H^1(I;H^2(\Omega))}\leq M.
\end{equation*}
Then for any $\Omega_0\subset  \subset \Omega$, there exists constants $c>0$ and $\theta\in (0,1)$,
depending on $\Omega_0,\omega,t_0,\zeta,\Omega, R$ and $A$, such that
\begin{equation*}
\|f\|_{L^2(\Omega_0)}\leq cM^{1-\theta}(\|Au(t_0)\|_{L^2(\Omega)} + \|\partial_tu\|_{L^2(I; H^1(\omega))} + \|G\|_{H^1(I;L^2(\Omega))})^\theta.
\end{equation*}
\end{theorem}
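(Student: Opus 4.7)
The plan is to follow the template of Theorem \ref{thm:hold-stab} essentially verbatim, absorbing the extra source $G$ as a controllable perturbation, exactly in the same way Theorem \ref{thm:Lip-perturb-stability} extends Theorem \ref{thm: Lip-stab}. First I set $v=\partial_t u$, which now satisfies
\begin{equation*}
\partial_t v + Av = (\partial_t R)\,f + \partial_t G \ \text{in } Q,
\qquad v(t_0) = R(t_0)f + G(t_0) - Au(t_0) \ \text{in } \Omega.
\end{equation*}
Applying the interior Carleman estimate of Lemma \ref{lem:local-car-estimate} to $v$ produces the same four terms I--IV as in the proof of Theorem \ref{thm:hold-stab}, except that term I now reads $c\int_Q (|\partial_t R|^2 |f|^2 + |\partial_t G|^2)\,e^{2s\vartheta}\rmd x\rmd t$; since $\vartheta$ is uniformly bounded on $\overline Q$, the extra piece is controlled by $ce^{cs}\|\partial_t G\|_{L^2(I;L^2(\Omega))}^2$.

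Next I reproduce the identity
\begin{equation*}
\int_\Omega |v(t_0)|^2 e^{2s\vartheta(t_0)}\rmd x \le cs^{-1}(\mathrm{I}+\mathrm{II}+\mathrm{III}+\mathrm{IV})
\end{equation*}
from Theorem \ref{thm:hold-stab}, and combine it with the pointwise identity for $v(t_0)$ and the triangle inequality to obtain
\begin{equation*}
\int_\Omega |R(t_0)f|^2 e^{2s\vartheta(t_0)}\rmd x \le cs^{-1}(\mathrm{I}+\mathrm{II}+\mathrm{III}+\mathrm{IV}) + c\int_\Omega (|Au(t_0)|^2 + |G(t_0)|^2)e^{2s\vartheta(t_0)}\rmd x.
\end{equation*}
The $G(t_0)$ contribution is handled via the embedding $H^1(I;L^2(\Omega))\hookrightarrow C(\overline I;L^2(\Omega))$, giving a bound of $ce^{cs}\|G\|_{H^1(I;L^2(\Omega))}^2$. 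Using Assumption \ref{ass:reg} and choosing $s$ large enough, the $f$-term inside I can be absorbed on the left, leaving
\begin{equation*}
\int_\Omega |f|^2 e^{2s\vartheta(t_0)}\rmd x \le cs^{-1}(\mathrm{II}+\mathrm{III}+\mathrm{IV}) + ce^{cs}\bigl(\|Au(t_0)\|_{L^2(\Omega)}^2 + \|\partial_t u\|_{L^2(I;H^1(\omega))}^2 + \|G\|_{H^1(I;L^2(\Omega))}^2\bigr).
\end{equation*}

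The terms III and IV are controlled, as in Theorem \ref{thm:hold-stab}, by the a priori bound $M$ together with the trace inequality, yielding $cs^2 e^{2s\sigma_1}M^2$ with $\sigma_1 < \sigma_0 := \min_{\overline{\Omega_0}} \vartheta(t_0)$. Dividing through by $e^{2s\sigma_0}$ and setting $\mu = \sigma_0 - \sigma_1 > 0$ produces
\begin{equation*}
\|f\|_{L^2(\Omega_0)} \le ce^{-c\mu s} M + ce^{cs}\bigl(\|Au(t_0)\|_{L^2(\Omega)} + \|\partial_t u\|_{L^2(I;H^1(\omega))} + \|G\|_{H^1(I;L^2(\Omega))}\bigr)
\end{equation*}
for all $s$ sufficiently large, and balancing the two terms in $s$ (treating the trivial case where the data-norm dominates $M$ separately) yields the Hölder estimate with some $\theta\in(0,1)$.

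The only genuinely new bookkeeping compared to Theorem \ref{thm:hold-stab} is tracking the two appearances of $G$: once as the forcing perturbation $\partial_t G$ inside term I, and once as the initial-trace perturbation $G(t_0)$ coming from the identity for $v(t_0)$. The main obstacle, if any, is simply verifying that both contributions can be packaged as $e^{cs}\|G\|_{H^1(I;L^2(\Omega))}^2$ so that they can be absorbed into the final Hölder interpolation without altering the exponent $\theta$; the uniform boundedness of $\vartheta$ on $\overline Q$ and the one-dimensional Sobolev embedding $H^1(I)\hookrightarrow C(\overline I)$ together handle exactly this.
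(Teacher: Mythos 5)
Your proposal is correct and matches the paper's intent exactly: the paper omits the proof, stating it is "similar to Theorems \ref{thm:hold-stab} and \ref{thm:Lip-perturb-stability}", and your argument is precisely that combination — the interior Carleman estimate of Lemma \ref{lem:local-car-estimate} applied to $v=\partial_t u$, with the two $G$-contributions ($\partial_t G$ in the forcing and $G(t_0)$ via $H^1(I)\hookrightarrow C(\overline I)$) absorbed into the $e^{cs}$ data term before the usual $\sigma_0$--$\sigma_1$ weight comparison and balancing in $s$. The level of detail (absorption of the $f$-term using $\vartheta(x,t)\le\vartheta(x,t_0)$ and the trivial case when the data dominates $M$) is on par with the paper's own treatment of Theorem \ref{thm:hold-stab}, so no gaps to report.
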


\section{Numerical approximation and error analysis}\label{sec:approximation}
Inspired by the conditional stability estimates in Section \ref{sec:stability}, we now construct a numerical approximation using the Galerkin FEM \cite{Thome2006GalerkinFE} in both space and time, and provide rigorous error bounds on the discrete approximation. The error estimates are consistent with the stability estimates in Section \ref{sec:stability}.

\subsection{Regularized problem and its numerical approximation}
First we develop a numerical procedure for recovering the source $f$. For both space and time discretizations, we employ the Galerkin FEM. Let $\Omega\subset \mathbb{R}^n$ be a  simply connected convex polyhedral domain with a boundary $\partial\Omega$. Let $h\in(0,h_0]$ for some $h_0<1$ and  $\mathcal{T}_h:=\cup\{K_j\}_{j=1}^{N_h}$ be a shape regular quasi-uniform simplicial triangulation of the domain $\Omega$ into mutually disjoint open face-to-face
subdomains $K_j$, such that $\Omega:= {\rm Int}(\cup_j\{\overline{K}_j\})$. On the triangulation $\mathcal{T}_h$, we define two $H^2(\Omega)$ conforming finite element spaces $V_h$ and $\mathring{V}_h:= V_h\cap H_0^1(\Omega)$ (e.g., Hermite elements and Argyris elements for $n=1,2$ \cite{BrennerScott:book2008} and Zhang elements for $n=3$ \cite{zhang2009family}). Moreover, we define the continuous linear element space $W_h$ by
\begin{equation*}
	W_{h}:=	\{w_{h}\in H^1(\Omega):	w_{h}|_{K}\in P_1(K),\quad\forall K\in\mathcal{T}_h\},
\end{equation*}
with $P_r(K)$ being the space of polynomials of degree at most $r$ over $K$.
For the FE spaces $\mathring{V}_h$ and $W_h$, we define the $L^2(\Omega)$ projections $P_h: L^2(\Omega)\mapsto \mathring{V}_h$ and $\pi_h: L^2(\Omega) \mapsto W_h$, respectively, by
\begin{align*}
	(P_hv, \varphi_h) &= (v,\varphi_h),\quad\forall v\in L^2(\Omega),\ \varphi_h\in \mathring{V}_h, \\ (\pi_hw, \psi_h) &= (w,\psi_h),\quad\forall w\in L^2(\Omega),\ \psi_h\in W_h.
\end{align*}
Then the following stability and approximation properties of $P_h$ hold for $m,k\in\mathbb{N}$ \cite[p. 123]{BrennerScott:book2008}:
\begin{align}
	\label{ineq:Ph stab}
	\|P_h v\|_{H^m(\Omega)} &\le c\|v\|_{H^m(\Omega)},
	 \quad \forall v\in H^m(\Omega)\cap H_0^1(\Omega),\ 1\leq m\leq 2,\\
	\label{ineq:Ph approx}
	\|v - P_h v\|_{H^k(\Omega)} &\le ch^{m-k}\|v\|_{H^m(\Omega)},\quad
	 \forall v\in H^m(\Omega)\cap H_0^1(\Omega),\ 1\leq k\leq m\leq3.
\end{align}
Moreover, the following approximation property of $\pi_h$ holds for $m=1,2$  \cite[p. 82]{Braess2007}:
\begin{equation}\label{ineq:pih stab and approx}
	\|v - \pi_h v\|_{L^2(\Omega)} + h\|v - \pi_h v\|_{H^1(\Omega)} \le ch^m\|v\|_{H^m(\Omega)}, \quad\forall v\in H^m(\Omega).
\end{equation}
Now we describe the time discretization scheme. We divide the interval $I$ into $N$ uniform subintervals $I_n:=[t_n,t_{n+1})$, $n=1,\dots,N$, with the step size $\tau=|I|/N$ and the grid points $t_n=t_0-\zeta + (n-1)\tau$, $n=1,\dots,N+1$. Then we define the Hermite element space $V_\tau$ by
\begin{equation*}
V_\tau := \{v_\tau\in H^2(I): v_\tau|_{I_n}\in P_3(I_n) \}.
\end{equation*}
Let $P_\tau:L^2(I)\mapsto V_\tau$ be the $L^2(I)$ projection. Then the operator $P_\tau$ satisfies for $m,k\in\mathbb{N}$ \cite[p. 123]{BrennerScott:book2008}:
\begin{align}
	\label{ineq:Ptau stab}
	\|P_\tau v\|_{H^m(I)} &\le c\|v\|_{H^m(I)},
	\quad \forall v\in H^m(I),\ 1\leq m\leq 2,\\
	\label{ineq:Ptau approx}
	\|v - P_\tau v\|_{H^k(I)} &\le c\tau^{m-k}\|v\|_{H^m(I)},
	\quad \forall v \in H^m(I),\ 1\leq k\leq m\leq3.
\end{align}
Lastly, we construct the space-time $H^2$ conforming element spaces $V_{h,\tau}$ and $\mathring{V}_{h,\tau}$ using the tensor product:
\begin{equation}\label{eqn:tensor element}
\begin{split}
V_{h,\tau} := V_h\otimes V_\tau\subset H^2(I;H^2(\Omega))\quad \mbox{and}\quad
\mathring{V}_{h,\tau} := \mathring{V}_h\otimes V_\tau\subset H^2(I;H^2(\Omega)\cap H_0^1(\Omega)).
\end{split}
\end{equation}
The state $u$ and the source $f$ are discretized by the FE spaces $V_{h,\tau}$ and $W_h$, respectively. To develop a numerical scheme for recovering the source $f$, we employ the regularized least-square formulation with the equation residual. This leads to the following regularized functional
\begin{align}\label{eqn:full-reg prob}
\min_{u_{h,\tau}\in \mathring{V}_{h,\tau}, f_h\in W_h}J_{h,\tau}(u_{h,\tau},f_h) :=&
\frac12 \|u_{h,\tau}-q\|_{H^1(I;L^2(\omega))}^2
+ \frac12 \|A u_{h,\tau}(t_0) - p\|_{L^2(\Omega)}^2 \\&
+ \frac{1}{2} \|L u_{h,\tau} - Rf_h\|_{H^1(I;L^2(\Omega))}^2.	\nonumber
\end{align}
Similarly, we also define an alternative regularized functional when $u$ is not known on the boundary $\partial\Omega$:
{\small \begin{align}\label{eqn:full-reg prob holder}
\min_{u_{h,\tau}\in V_{h,\tau}, f_h\in W_h}\widetilde J_{\widetilde \gamma,h,\tau}(u_{h,\tau},f_h) :=&
		\frac12 \|u_{h,\tau}-q\|_{H^1(I;L^2(\omega))}^2 + \frac12 \|\nabla u_{h,\tau} - r\|_{H^1(I;L^2(\omega))}^2
		\\& +  \frac12 \|A u_{h,\tau}(t_0) - p\|_{L^2(\Omega)}^2
		+ \frac12 \|L u_{h,\tau} - Rf_h\|_{H^1(I;L^2(\Omega))}^2
	\nonumber\\&	+ \frac{\widetilde{\gamma}_f}{2} \|f_h\|^2_{L^2(\Omega)}  +\frac{\widetilde{\gamma}_u}{2}(\|u_{h,\tau} \|_{H^1(I;H^2(\Omega))}^2 + \|u_{h,\tau}\|_{H^2(I;H^1(\Omega))}^2),\nonumber
\end{align}}
with $\widetilde\gamma = (\widetilde{\gamma}_f,\widetilde\gamma_u)$, where $\widetilde{\gamma}_f$ and $\widetilde\gamma_u$ are positive scalars. Note that in the loss $\widetilde{J}_{\widetilde{\bm\gamma},h,\tau}$, we also include  an $L^2(\Omega)$ penalty on the source $f$.

The next result gives the unique solvability of problem \eqref{eqn:full-reg prob}. The identity \eqref{eqn:Euler-Lag} indicates that the reconstruction scheme involves only solving one linear system.
\begin{theorem}\label{thm:uni-sol-Lip-prob}
Let Assumption \ref{ass:reg} hold. Problem \eqref{eqn:full-reg prob} admits a unique solution $(u_{h,\tau}^*,f_h^*)\in \mathring{V}_{h,\tau}\times W_h$. Moreover, let the bilinear form $b\big[\cdot,\cdot\big]: (\mathring{V}_{h,\tau}\times W_h)^2 \to \mathbb{R}$ be defined by
\begin{align*}
b\big[(u_{h,\tau},f_h),(v_{h,\tau},g_h)\big] := & (u_{h,\tau}, v_{h,\tau})_{H^1(I;L^2(\omega))}
+ (Au_{h,\tau}(t_0),Av_{h,\tau}(t_0)) \\& + (Lu_{h,\tau}-Rf_h, Lv_{h,\tau}-Rg_h)_{H^1(I;L^2(\Omega))}.
\end{align*}
Then the minimizer $(u_{h,\tau}^*,f_h^*)$ satisfies
\begin{equation}\label{eqn:Euler-Lag}
b\big[(u_{h,\tau}^*,f_h^*),(v_{h,\tau},g_h)\big] = \big(q,v_{h,\tau}\big)_{H^1(I;L^2(\omega))} + \big(p,Av_{h,\tau}(t_0)\big),\quad\forall(v_{h,\tau},g_h)\in \mathring{V}_{h,\tau}\times W_h.
\end{equation}
\end{theorem}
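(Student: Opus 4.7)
The plan is to recognize $J_{h,\tau}$ as a convex quadratic functional on the finite-dimensional space $\mathring{V}_{h,\tau} \times W_h$ and reduce everything to positive definiteness of the associated symmetric bilinear form $b[\cdot,\cdot]$. Existence, uniqueness, and the Euler--Lagrange identity \eqref{eqn:Euler-Lag} then follow by a routine quadratic optimization argument; the substantive content is the positive definiteness.

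First I would expand the three squares defining $J_{h,\tau}$ to write
\begin{equation*}
J_{h,\tau}(u_{h,\tau},f_h) = \tfrac12 b\big[(u_{h,\tau},f_h),(u_{h,\tau},f_h)\big] - \ell(u_{h,\tau},f_h) + C,
\end{equation*}
where $\ell(v_{h,\tau},g_h):=(q,v_{h,\tau})_{H^1(I;L^2(\omega))} + (p,Av_{h,\tau}(t_0))$ is linear in its arguments, and $C=\tfrac12(\|q\|_{H^1(I;L^2(\omega))}^2+\|p\|_{L^2(\Omega)}^2)$ depends only on the data. Symmetry of $b$ is immediate from the definition.

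The decisive step is showing that $b$ is positive definite on $\mathring{V}_{h,\tau} \times W_h$. Suppose $b[(u,f),(u,f)] = 0$ for some $(u,f)$ in this space. Vanishing of the three nonnegative summands yields
\begin{equation*}
u\equiv 0 \ \text{in}\ H^1(I;L^2(\omega)),\qquad Au(t_0)\equiv 0 \ \text{in}\ L^2(\Omega),\qquad Lu = Rf \ \text{in}\ H^1(I;L^2(\Omega)).
\end{equation*}
Since by construction $\mathring{V}_{h,\tau}\subset H^2(I;L^2(\Omega))\cap H^1(I;H^2(\Omega)\cap H_0^1(\Omega))$, the regularity hypothesis of Theorem \ref{thm: Lip-stab} is automatically met, and $u$ solves \eqref{eqn:gov} with source $Rf$ and homogeneous Dirichlet data. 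Applying Theorem \ref{thm: Lip-stab} gives
\begin{equation*}
\|f\|_{L^2(\Omega)} \le c\big(\|\partial_tu\|_{L^2(Q_\omega)} + \|Au(t_0)\|_{L^2(\Omega)}\big) = 0,
\end{equation*}
so $f=0$. Consequently $Lu=0$ in $Q$ with $u\equiv 0$ in $Q_\omega$ and $u|_{\partial\Omega\times I}=0$, and a direct application of Lemma \ref{lem: Carleman estimate} with vanishing right-hand side forces $u\equiv 0$ in $Q$, hence $u=0$ in $\mathring{V}_{h,\tau}$.

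With $b[\cdot,\cdot]$ strictly positive definite on the finite-dimensional product space, its induced quadratic form is coercive and $J_{h,\tau}$ is strictly convex and coercive, which yields a unique minimizer $(u_{h,\tau}^*,f_h^*)$. The identity \eqref{eqn:Euler-Lag} is then nothing but the vanishing first-variation condition, obtained by differentiating $J_{h,\tau}$ along arbitrary $(v_{h,\tau},g_h)\in\mathring{V}_{h,\tau}\times W_h$ and rearranging the linear terms. I expect the only real obstacle to be the positive-definiteness step, where one must carefully use membership in $\mathring{V}_{h,\tau}$ to certify the regularity required by Theorem \ref{thm: Lip-stab}, and then separately invoke Lemma \ref{lem: Carleman estimate} to kill $u$ after $f$ has been eliminated.
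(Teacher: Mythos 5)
Your proposal is correct and takes essentially the same route as the paper: the paper likewise reduces everything to showing that the homogeneous problem ($p=q\equiv 0$) forces $(u_{h,\tau},f_h)=(0,0)$, first invoking Theorem \ref{thm: Lip-stab} (with the regularity supplied by $\mathring{V}_{h,\tau}\subset H^2(I;L^2(\Omega))\cap H^1(I;H^2(\Omega)\cap H^1_0(\Omega))$) to conclude $f_h=0$, and then Lemma \ref{lem: Carleman estimate} with vanishing source and vanishing data on $Q_\omega$ to kill $u_{h,\tau}$. Your explicit quadratic-form/positive-definiteness framing is just a slightly more detailed rendering of the paper's ``standard calculus of variations'' step, so there is nothing substantive to add.
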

\begin{proof}
By the standard argument in calculus of variation, the equivalence of the minimizer to problem \eqref{eqn:full-reg prob} and the solution to equation \eqref{eqn:Euler-Lag} follows. It remains to prove the unique solvability of \eqref{eqn:Euler-Lag}.  Let $|\!|\!|\cdot|\!|\!|$ be the energy semi-norm induced by the bilinear form $b$, i.e.,
\begin{equation}\label{eqn:energy norm}
|\!|\!|(v_{h,\tau},g_h)|\!|\!|: = \sqrt{b\big[(v_{h,\tau},g_h),(v_{h,\tau},g_h)\big]}.
\end{equation}
Below we prove that problem \eqref{eqn:Euler-Lag} has one unique zero solution in $\mathring{V}_{h,\tau}\times W_h$ when the data $p=q\equiv0$. By the definition of $b$, we deduce
\begin{equation*}
\left\{\begin{aligned}
v_{h,\tau} &= \partial_tv_{h,\tau} = 0\quad\mbox{in } Q_\omega,\\
Av_{h,\tau}(t_0) &= 0\quad \mbox{in }\Omega,\\  Lv_{h,\tau} &= Rg_h\quad\mbox{in } Q.
\end{aligned}\right.
\end{equation*}
Hence Theorem \ref{thm: Lip-stab} implies that $g_h\equiv0$ in $\Omega$ and $Lv_{h,\tau} = 0$ in $Q$. Next using the identity $v_{h,\tau} =0$ in $ Q_\omega$ and the Carleman estimate in Lemma \ref{lem: Carleman estimate} give $v_{h,\tau}=0$ in $Q$. This completes the proof of the theorem.
\end{proof}

The following theorem gives the existence and uniqueness of a minimizer to problem \eqref{eqn:full-reg prob holder}. The proof is similar to Theorem \ref{thm:uni-sol-Lip-prob}. The last two terms in \eqref{eqn:full-reg prob holder} imply that there exists a unique zero solution to the Euler-Lagrange equation when $p,q = 0$ and $r = \bm 0$.
\begin{theorem}
Problem \eqref{eqn:full-reg prob holder} admits a unique solution $(\widetilde{u}_{h,\tau}^*,\widetilde{f}_h^*)\in V_{h,\tau}\times W_h$. Moreover, let $\widetilde{b}\big[\cdot,\cdot\big]$ be defined by
\begin{align*}
\widetilde b\big[(u_{h,\tau},f_h),(v_{h,\tau},g_h)\big] &:= (u_{h,\tau},v_{h,\tau})_{H^1(I;L^2(\omega))} + (\nabla u_{h,\tau},\nabla v_{h,\tau})_{H^1(I;L^2(\omega))}
\\&\qquad + (A u_{h,\tau}(t_0),A v_{h,\tau}(t_0))_{L^2(\Omega)} + (
Lu_{h,\tau} - Rf_h,Lv_{h,\tau} - Rg_h)_{H^1(I;L^2(\Omega))}
	\\&\qquad	+ \widetilde{\gamma}_f (f_h,g_h)  +\widetilde{\gamma}_u\big((u_{h,\tau},v_{h,\tau})_{H^1(I;H^2(\Omega))} + (u_{h,\tau},v_{h,\tau})_{H^2(I;H^1(\Omega))}\big)
\end{align*}
over the space $(V_{h,\tau}\times W_h)^2$.
Then the minimizer $(\widetilde{u}_{h,\tau}^*,\widetilde{f}_h^*)$ satisfies
{\footnotesize\begin{equation*}
\widetilde b\big[(\widetilde{u}_{h,\tau}^*,\widetilde{f}_h^*),(v_{h,\tau},g_h)\big] = \big(q,v_{h,\tau}\big)_{H^1(I;L^2(\omega))} + (r,\nabla v_{h,\tau})_{H^1(I;L^2(\omega))} + \big(p,Av_{h,\tau}(t_0)\big),\quad\forall(v_{h,\tau},g_h)\in V_{h,\tau}\times W_h.
\end{equation*}}
\end{theorem}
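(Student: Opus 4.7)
The plan is to parallel the proof of Theorem \ref{thm:uni-sol-Lip-prob}. First, standard calculus-of-variations arguments give the equivalence between minimizers of the quadratic functional $\widetilde J_{\widetilde\gamma,h,\tau}$ and solutions of the displayed Euler--Lagrange equation: computing the G\^ateaux derivative at $(\widetilde u_{h,\tau}^*,\widetilde f_h^*)$ and pairing with an arbitrary test pair $(v_{h,\tau},g_h)\in V_{h,\tau}\times W_h$ yields precisely the stated bilinear identity. It thus remains to show that the symmetric bilinear form $\widetilde b$ is positive definite on the finite-dimensional space $V_{h,\tau}\times W_h$, since this implies both the invertibility of the associated linear system and the strict convexity of $\widetilde J_{\widetilde\gamma,h,\tau}$ needed to conclude that the critical point is the unique global minimizer.

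For positive definiteness, the key observation is that $\widetilde b$ already contains two strictly positive quadratic penalties, $\widetilde\gamma_f\|f_h\|_{L^2(\Omega)}^2$ and $\widetilde\gamma_u(\|u_{h,\tau}\|_{H^1(I;H^2(\Omega))}^2+\|u_{h,\tau}\|_{H^2(I;H^1(\Omega))}^2)$, each of which is a squared full norm on its respective factor; note in particular that the Bochner--Sobolev norms involved contain all lower-order contributions, so they control $u_{h,\tau}$ itself and not only its derivatives. Setting $(v_{h,\tau},g_h)=(u_{h,\tau},f_h)$ and discarding the remaining nonnegative terms yields
\begin{equation*}
\widetilde b\big[(u_{h,\tau},f_h),(u_{h,\tau},f_h)\big]\ \ge\ \widetilde\gamma_f\|f_h\|_{L^2(\Omega)}^2 + \widetilde\gamma_u\|u_{h,\tau}\|_{H^1(I;H^2(\Omega))}^2,
\end{equation*}
so $\widetilde b$ is coercive on $V_{h,\tau}\times W_h$. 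Existence and uniqueness of $(\widetilde u_{h,\tau}^*,\widetilde f_h^*)$ then follow from the Lax--Milgram theorem, or equivalently from the invertibility of the associated symmetric positive definite stiffness matrix.

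The point I expect to be least routine is simply recognizing that, unlike in Theorem \ref{thm:uni-sol-Lip-prob}, no appeal to the conditional stability (Theorem \ref{thm: Lip-stab}) or to the global Carleman estimate is needed here to rule out a nontrivial kernel. This is exactly the role played by the last two terms in \eqref{eqn:full-reg prob holder}: they provide coercivity directly, so that the H\"older-type stability of Theorem \ref{thm:hold-stab} and the interior Carleman estimate of Lemma \ref{lem:local-car-estimate} enter only later, in the error analysis, where they govern the $\widetilde\gamma$-dependence of the reconstruction error. Hence no serious obstacle is anticipated beyond the algebra of expanding $\widetilde b$ on the diagonal and noting the norm properties of its penalty terms.
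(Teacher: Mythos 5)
Your proposal is correct and matches the paper's (sketched) argument: the paper likewise reduces to the Euler--Lagrange equation via the standard variational argument of Theorem \ref{thm:uni-sol-Lip-prob} and observes that the two penalty terms alone force the zero solution when $p,q=0$ and $r=\bm{0}$, i.e.\ exactly the positive definiteness of $\widetilde b$ that you establish, with no appeal to Carleman estimates or conditional stability. Your explicit coercivity bound and Lax--Milgram phrasing are just a slightly more detailed rendering of the same idea.
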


\subsection{Error analysis}
Now we establish \textit{a priori} $L^2(\Omega)$ error estimates on the minimizers $f_h^*\in W_h$ to problem \eqref{eqn:full-reg prob} and $\widetilde{f}_h^*\in W_h$ to problem \eqref{eqn:full-reg prob holder} under the following regularity assumption on the exact source $f^\dag$ and exact state $u^\dag$.

\begin{assumption}\label{assum: dis-reg}
Assumption \ref{ass:reg} holds. Moreover, $ f^\dagger\in H^{1+r}(\Omega)$ and $u^\dagger\in H^{3+r}(I;L^2(\Omega))\cap H^{2+r}(I;H^2(\Omega))\cap H^1(I; H^{3+r}(\Omega))$ holds for $r=0$ or $1$ .
\end{assumption}

The next lemma provides an \textit{a priori} bound on $J_{h,\tau}(u_h^*,f_h^*)$.

\begin{lemma}\label{lem:approx-uhs-fhs-Ihpih}
Under Assumptions \ref{ass:reg} and \ref{assum: dis-reg}, the following estimate holds
\begin{equation*}
J_{h,\tau}(u^*_{h,\tau},f^*_h) \leq c(\delta^2 + \tau^{2+2r} + h^{2+2r}).
\end{equation*}
\end{lemma}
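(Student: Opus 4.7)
The plan is to exploit the minimality of $(u_{h,\tau}^*, f_h^*)$ by testing the objective at a canonical discrete approximation of the exact pair. Since $u^\dagger(t,\cdot) \in H^1_0(\Omega)$ for a.e.\ $t$, the spatial $L^2$-projection gives $P_h u^\dagger(t,\cdot) \in \mathring{V}_h$, so $\widetilde u := P_\tau P_h u^\dagger \in \mathring{V}_{h,\tau}$ and $\widetilde f := \pi_h f^\dagger \in W_h$ form an admissible test pair for \eqref{eqn:full-reg prob}. By the minimality property,
\[
J_{h,\tau}(u_{h,\tau}^*, f_h^*) \;\leq\; J_{h,\tau}(\widetilde u, \widetilde f),
\]
and it suffices to bound each of the three quadratic terms on the right-hand side.

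For each term I would apply the triangle inequality to isolate the observational noise, controlled by $\delta^2$ by the definition of the noise level, from a pure discretization contribution. The data-fidelity term is bounded by $\|u^\dagger - \widetilde u\|_{H^1(I;L^2(\Omega))}^2 + \delta^2$; the terminal-observation term is bounded by $\|A(u^\dagger - \widetilde u)\|_{H^1(I;L^2(\Omega))}^2 + \delta^2$, where the pointwise-in-time evaluation at $t_0$ is justified through the Sobolev embedding $H^1(I) \hookrightarrow C(\overline I)$; and the PDE residual is handled via the key identity $Lu^\dagger = Rf^\dagger$, which yields $L\widetilde u - R\widetilde f = L(\widetilde u - u^\dagger) + R(f^\dagger - \widetilde f)$ and hence a split into an $L(u^\dagger - \widetilde u)$ piece and an $R(f^\dagger - \widetilde f)$ piece.

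For the space-time approximation of $u^\dagger$, I would use the tensor-product splitting
\[
u^\dagger - P_\tau P_h u^\dagger = (I - P_\tau) u^\dagger + P_\tau (I - P_h) u^\dagger,
\]
together with the stability estimates \eqref{ineq:Ph stab}, \eqref{ineq:Ptau stab} and the approximation estimates \eqref{ineq:Ph approx}, \eqref{ineq:Ptau approx}, in the Bochner norms $H^1(I;L^2(\Omega))$, $H^1(I;H^2(\Omega))$, and $H^2(I;L^2(\Omega))$. The regularity hypotheses in Assumption \ref{assum: dis-reg} are tailored exactly to these norms and deliver the rate $\tau^{1+r} + h^{1+r}$ in each. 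For the source term, \eqref{ineq:pih stab and approx} gives directly $\|R(f^\dagger - \pi_h f^\dagger)\|_{H^1(I;L^2(\Omega))} \leq c\|R\|_{W^{1,\infty}(I;L^\infty(\Omega))} h^{1+r}\|f^\dagger\|_{H^{1+r}(\Omega)}$.

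The main obstacle is the PDE-residual term, since expanding $L = \partial_t + A$ requires controlling two different mixed norms of $u^\dagger - P_\tau P_h u^\dagger$: the $H^2(I;L^2(\Omega))$ norm (arising from $\partial_t$, needing $H^2$-stability of $P_\tau$ applied to the spatial error piece) and the $H^1(I;H^2(\Omega))$ norm (arising from $A$, needing $H^2$-stability of $P_h$ applied to the temporal error piece). Both stability bounds are supplied by \eqref{ineq:Ph stab} and \eqref{ineq:Ptau stab}, and the regularity $u^\dagger \in H^{3+r}(I;L^2(\Omega)) \cap H^{2+r}(I;H^2(\Omega)) \cap H^1(I;H^{3+r}(\Omega))$ provides exactly the source material for the approximation estimates in these three norms. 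Summing the squared contributions then yields the asserted bound $J_{h,\tau}(u_{h,\tau}^*, f_h^*) \leq c(\delta^2 + \tau^{2+2r} + h^{2+2r})$.
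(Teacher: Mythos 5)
Your proposal is correct and follows essentially the same route as the paper's proof: testing the minimality at $(P_hP_\tau u^\dagger,\pi_h f^\dagger)$, splitting off the noise $\delta^2$ by the triangle inequality, using the embedding $H^1(I)\hookrightarrow C(\overline I)$ for the evaluation at $t_0$, invoking $Lu^\dagger = Rf^\dagger$ for the residual, and combining the stability and approximation properties \eqref{ineq:Ph stab}--\eqref{ineq:pih stab and approx} and \eqref{ineq:Ptau stab}--\eqref{ineq:Ptau approx} in the mixed Bochner norms $H^1(I;L^2(\Omega))$, $H^1(I;H^2(\Omega))$ and $H^2(I;L^2(\Omega))$. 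The only (immaterial) difference is the order of the tensor-product splitting of $u^\dagger - P_hP_\tau u^\dagger$, which coincides with the paper's since $P_h$ and $P_\tau$ commute.
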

\begin{proof}
By the minimizing property of $(u^*_{h,\tau},f^*_h)$ and noting $(P_hP_\tau u^\dagger,\pi_h f^\dagger)\in \mathring{V}_{h,\tau}\times W_h$, we obtain
\begin{equation*}
    J_{h,\tau}(u^*_{h,\tau},f^*_h) \leq  J_{h,\tau}(P_hP_\tau u^\dagger,\pi_h f^\dagger) = {\rm I} + {\rm II} + {\rm III},
\end{equation*}
where the three terms $\rm I$, $\rm II$ and $\rm III$ are given by
\begin{equation}\label{ineq:error3term}
\begin{aligned}
&{\rm I}= \tfrac12 \|P_hP_\tau u^\dagger-q\|_{H^1(I;L^2(\omega))}^2, \
{\rm II} = \tfrac12 \|A P_hP_\tau u^\dagger(t_0) - p\|_{L^2(\Omega)}^2,\\ &
{\rm III} = \tfrac{1}{2} \|L P_hP_\tau u^\dagger - R\pi_h f^\dagger\|_{H^1(I;L^2(\Omega))}^2.
\end{aligned}
\end{equation}
By the triangle inequality, the approximation property \eqref{ineq:Ph approx} of $P_h$, and the $H^1(\Omega)$ stability \eqref{ineq:Ptau stab} and the approximation property \eqref{ineq:Ptau approx} of $P_\tau$, we have
\begin{align*}
    {\rm I} &\leq c\big(\|P_hP_\tau u^\dagger-P_\tau u^\dagger\|_{H^1(I;L^2(\omega))}^2 + \|P_\tau u^\dagger - u^\dagger
    \|_{H^1(I;L^2(\omega))}^2 + \| u^\dagger-
    q\|_{H^1(I;L^2(\omega))}^2 \big) \\& \leq c\big(h^{6+2r}\|u^\dagger\|^2_{H^1(I;H^{3+r}(\Omega))} + \tau^{4+2r}\|u^\dagger\|^2_{H^{3+r}(I;L^2(\Omega))}+ \delta^2) \leq c(h^{6+2r} + \tau^{4+2r} +\delta^2 ).
\end{align*}
Using the Sobolev embedding $H^1(I)\hookrightarrow C(\overline I)$, and the $H^2(\Omega)$ stability \eqref{ineq:Ph stab} and the approximation property \eqref{ineq:Ph approx} of  $P_h$, we deduce
\begin{align*}
    {\rm II} &\leq c\big(\|A P_hP_\tau u^\dagger(t_0) - A P_h u^\dagger(t_0)\|_{L^2(\Omega)}^2 + \|A P_h u^\dagger(t_0) - Au^\dagger(t_0)\|_{L^2(\Omega)}^2 + \|Au^\dagger(t_0) - p\|_{L^2(\Omega)}^2 \big) \\
    &\leq c\big(\|P_\tau u^\dagger -  u^\dagger\|_{H^1(I;H^2(\Omega))}^2 + \|P_hu^\dagger - u^\dagger\|_{H^1(I;H^2(\Omega))}^2 + \|Au^\dagger(t_0) - p\|_{L^2(\Omega)}^2 \big)  \\
    & \leq c\big( \tau^{2+2r}\|u^\dagger\|^2_{H^{2+r}(I;H^2(\Omega))}  + h^{2+2r}\|u^\dagger\|^2_{H^1(I; H^{3+r}(\Omega))} + \delta^2)\leq c(\tau^{2+2r} + h^{2+2r} + \delta^2).
\end{align*}
Next we bound the term ${\rm III}$. Since the equation $Lu^\dagger = Rf^\dagger$ holds in $Q$, we arrive at
\begin{align*}
    {\rm III} &\leq c(\|L P_hP_\tau u^\dagger - L P_\tau u^\dagger\|_{H^1(I;L^2(\Omega))}^2 + \|L P_\tau u^\dagger - Lu^\dagger\|_{H^1(I;L^2(\Omega))}^2 \\&\qquad+  \|Rf^\dagger - R\pi_h f^\dagger\|_{H^1(I;L^2(\Omega))}^2) =: {\rm III}_1 + {\rm III}_2 + {\rm III}_3.
\end{align*}
Below we bound the three terms $({\rm III}_i)_{i=1}^3$ separately. Assumption \ref{assum: dis-reg} and the estimate \eqref{ineq:pih stab and approx} lead to
$${\rm III}_3 \leq ch^{2+2r}\|f^\dag\|_{H^{1+r}(\Omega)}^2.$$
By the definition of the operator $L=\partial_t + A$ and the estimates \eqref{ineq:Ph stab}-\eqref{ineq:Ptau approx}, there holds
\begin{align*}
    {\rm III}_2 &\leq c(\|\partial_t P_\tau u^\dagger - \partial_tu^\dagger\|_{H^1(I;L^2(\Omega))}^2 + \|A P_\tau u^\dagger - Au^\dagger\|_{H^1(I;L^2(\Omega))}^2) \\
    & \leq c(\tau^{2+2r}\|u^\dagger\|^2_{H^{3+r}(I;L^2(\Omega))} + \tau^{2+2r}\|u^\dagger\|^2_{H^{2+r}(I;H^2(\Omega))}) \leq c\tau^{2+2r}.
\end{align*}
Similarly, the bound on the term ${\rm III}_1$ follows from the estimates \eqref{ineq:Ph stab}-\eqref{ineq:Ptau approx}:
\begin{align*}
    {\rm III}_1 & \leq c(\|\partial_t P_hP_\tau u^\dagger -  \partial_tP_\tau u^\dagger\|_{H^1(I;L^2(\Omega))}^2 + \|A P_hP_\tau u^\dagger -  AP_\tau u^\dagger\|_{H^1(I;L^2(\Omega))}^2) \\&
    \leq c(h^4\|u^\dagger\|^2_{H^2(I;H^2(\Omega))} + h^{2+2r} \|u^\dagger\|^2_{H^1(I;H^{3+r}(\Omega))}) \leq ch^{2+2r}.
\end{align*}
Combining the preceding estimates yields the desired assertion.
\end{proof}

The next lemma gives an \textit{a priori} estimate on $\widetilde J_{\widetilde \gamma,h,\tau}(\widetilde{u}^*_{h,\tau},\widetilde{f}^*_h)$.
\begin{lemma}\label{lem:approx-uhs-fhs-Ihpih-hol}
Let Assumption \ref{assum: dis-reg} hold. Then there holds
\begin{equation*}
\widetilde J_{\widetilde \gamma,h,\tau}(\widetilde{u}^*_{h,\tau},\widetilde{f}^*_h) \leq c(\widetilde{\gamma}_f + \widetilde{\gamma}_u + \delta^2 + \tau^{2+2r} + h^{2+2r}).
\end{equation*}
\end{lemma}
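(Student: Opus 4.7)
The plan is to mirror the argument of Lemma \ref{lem:approx-uhs-fhs-Ihpih}. Because the admissible set is now $V_{h,\tau}\times W_h$ without a Dirichlet constraint, the projection $P_h$ (which targets $\mathring V_h$) is no longer directly usable; I replace it by an $H^2$-conforming quasi-interpolant $I_h:H^{3+r}(\Omega)\to V_h$ enjoying stability and approximation properties of exactly the same form as \eqref{ineq:Ph stab}--\eqref{ineq:Ph approx} but without the boundary constraint. Such an operator is standard for the Hermite/Argyris/Zhang $C^1$-element families introduced just before \eqref{eqn:tensor element}. Invoking the minimality of $(\widetilde u^*_{h,\tau},\widetilde f^*_h)$ against the test pair $(I_h P_\tau u^\dag,\pi_h f^\dag)\in V_{h,\tau}\times W_h$ reduces the proof to bounding the six summands of $\widetilde J_{\widetilde\gamma,h,\tau}(I_h P_\tau u^\dag,\pi_h f^\dag)$.

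Four of these summands (the observation fidelity $\tfrac12\|I_h P_\tau u^\dag-q\|_{H^1(I;L^2(\omega))}^2$, the gradient fidelity $\tfrac12\|\nabla I_h P_\tau u^\dag-r\|_{H^1(I;L^2(\omega))}^2$, the endpoint term $\tfrac12\|A(I_h P_\tau u^\dag)(t_0)-p\|_{L^2(\Omega)}^2$, and the PDE residual $\tfrac12\|L(I_h P_\tau u^\dag)-R\pi_h f^\dag\|_{H^1(I;L^2(\Omega))}^2$) are handled exactly as the three terms ${\rm I}$, ${\rm II}$, ${\rm III}$ in the proof of Lemma \ref{lem:approx-uhs-fhs-Ihpih}: insert $u^\dag$, invoke the PDE $Lu^\dag=Rf^\dag$, apply the triangle inequality, and combine the noise level $\delta$ with the estimates \eqref{ineq:Ph stab}--\eqref{ineq:pih stab and approx}. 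Each yields a contribution of $O(\delta^2+\tau^{2+2r}+h^{2+2r})$. The gradient fidelity loses one order of spatial approximation under $\nabla$, but Assumption \ref{assum: dis-reg} grants $u^\dag\in H^1(I;H^{3+r}(\Omega))$, so the $h^{2+2r}$ rate persists.

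The two Tikhonov penalty terms are estimated directly via stability of the projections. The $L^2$-stability of $\pi_h$ together with Assumption \ref{assum: dis-reg} yields $\tfrac{\widetilde\gamma_f}{2}\|\pi_h f^\dag\|_{L^2(\Omega)}^2\leq c\widetilde\gamma_f\|f^\dag\|_{L^2(\Omega)}^2\leq c\widetilde\gamma_f$; and the $H^2(\Omega)$-stability of $I_h$ combined with \eqref{ineq:Ptau stab} applied in the time variable give
\begin{equation*}
\tfrac{\widetilde\gamma_u}{2}\bigl(\|I_h P_\tau u^\dag\|_{H^1(I;H^2(\Omega))}^2+\|I_h P_\tau u^\dag\|_{H^2(I;H^1(\Omega))}^2\bigr) \leq c\widetilde\gamma_u\bigl(\|u^\dag\|_{H^1(I;H^2(\Omega))}^2+\|u^\dag\|_{H^2(I;H^1(\Omega))}^2\bigr)\leq c\widetilde\gamma_u,
\end{equation*}
where the last inequality follows from Assumption \ref{assum: dis-reg}. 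Summing the six contributions gives the stated bound. I do not foresee any essential obstacle; the only mild subtlety is the clean choice of the $H^2$-conforming interpolant $I_h$ on $V_h$ without the Dirichlet constraint, but this is a routine construction for the $C^1$-element families used here.
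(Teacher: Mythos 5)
Your proposal is correct and follows essentially the same route as the paper: test the minimality of $(\widetilde u^*_{h,\tau},\widetilde f^*_h)$ against a spatially interpolated, temporally projected copy of $(u^\dagger,f^\dagger)$, bound the fidelity and PDE-residual terms exactly as in Lemma \ref{lem:approx-uhs-fhs-Ihpih} together with the new gradient-fidelity term via the triangle inequality and the estimates \eqref{ineq:Ph stab}--\eqref{ineq:pih stab and approx}, and bound the two penalty terms by stability to get the $c(\widetilde\gamma_f+\widetilde\gamma_u)$ contribution. Your only deviation is replacing the paper's projection $P_h$ onto $\mathring V_h$ by a boundary-unconstrained $H^2$-conforming quasi-interpolant $I_h$ onto $V_h$, which is a harmless (indeed arguably more careful) substitution, since in the unknown-boundary setting $u^\dagger$ need not belong to $H_0^1(\Omega)$, which is what the stated properties \eqref{ineq:Ph stab}--\eqref{ineq:Ph approx} of $P_h$ formally require.
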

\begin{proof}
The argument is similar to Lemma \ref{lem:approx-uhs-fhs-Ihpih}.
Using the minimizing property of $(\widetilde{u}_{h,\tau}^*, \widetilde{f}_h^*)\in V_{h,\tau}\times W_h$ to problem \eqref{eqn:full-reg prob holder} and the estimates \eqref{ineq:Ph stab}-\eqref{ineq:Ptau approx}, there holds
\begin{align*}
    &\|\nabla P_hP_\tau u^\dagger - r\|_{H^1(I;L^2(\omega))}^2 \\\leq& c(\|\nabla P_hP_\tau u^\dagger - \nabla P_\tau u^\dagger\|_{H^1(I;L^2(\omega))}^2 + \|\nabla P_\tau u^\dagger -\nabla u^\dagger\|_{H^1(I;L^2(\omega))}^2 + \|\nabla u^\dagger - r\|_{H^1(I;L^2(\omega))}^2 ) \\ \leq& c(h^{4+2r}\|u^\dagger\|^2_{H^1(I;H^{3+r}(\Omega))} + \tau^{2+2r}\|u^\dagger\|^2_{H^{2+r}(I;H^1(\Omega))} + \delta^2).
\end{align*}
Thus Assumption \ref{assum: dis-reg} gives the estimate
$$ \|\nabla P_hP_\tau u^\dagger - r\|_{H^1(I;L^2(\omega))}^2 \leq c(h^{4+2r} + \tau^{2+2r} + \delta^2).$$
Moreover, Assumption \ref{assum: dis-reg} and the estimates \eqref{ineq:Ph stab}-\eqref{ineq:Ptau approx} yield
\begin{equation*}
   \widetilde{\gamma}_u(\|P_hP_\tau u^\dagger \|_{H^1(I;H^2(\Omega))}^2 + \|P_hP_\tau u^\dagger\|_{H^2(I;H^1(\Omega))}^2) \leq c\widetilde{\gamma}_u.
\end{equation*}
 The preceding inequalities and the assertions in Theorem \ref{lem:approx-uhs-fhs-Ihpih} yield the desired estimate.
\end{proof}

The following theorem states the error bounds on the minimizers $f_h^*$ and $\widetilde{f}_h^*$.
\begin{theorem}\label{thm:err-estimate}
Let Assumptions \ref{ass:reg} and \ref{assum: dis-reg} hold, and $f_h^*$ be the minimizer to problem \eqref{eqn:full-reg prob}. Then with $\eta^2:= \delta^2 + \tau^{2+2r} + h^{2+2r}$, there holds
\begin{equation*}
\|f^\dagger-f^*_h\|_{L^2(\Omega)} \leq c\eta.
 \end{equation*}
 Moreover, let $\widetilde f_h^*$ be the minimizer to problem \eqref{eqn:full-reg prob holder} and $\Omega_0\subset\subset\Omega$ be arbitrary. Then with $\widetilde\eta^2:=\widetilde{\gamma}_f + \widetilde{\gamma}_u + \delta^2 + \tau^{2+2r} + h^{2+2r}$, there exists $\theta\in (0,1)$, depending on $\Omega_0,\omega,t_0,\zeta,\Omega, R$ and the coefficients of $A$, such that
\begin{equation*}
\|f^\dagger-\widetilde f^*_h\|_{L^2(\Omega_0)} \leq c\widetilde\gamma_u^{-(1-\theta)/2}\widetilde\eta.
 \end{equation*}
\end{theorem}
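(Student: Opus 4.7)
The overall plan is to apply the perturbation stability results of Theorems \ref{thm:Lip-perturb-stability} and \ref{thm:hold-perturb-stab} to the error pairs $(e_u, e_f) := (u_{h,\tau}^* - u^\dagger, f_h^* - f^\dagger)$ and $(\widetilde e_u,\widetilde e_f) := (\widetilde u_{h,\tau}^* - u^\dagger,\widetilde f_h^* - f^\dagger)$. The key observation is that, since $L u^\dagger = R f^\dagger$, the error $e_u$ satisfies
\begin{equation*}
    L e_u = R e_f + G \quad \text{in } Q, \qquad G := L u_{h,\tau}^* - R f_h^*,
\end{equation*}
and $e_u$ inherits the zero boundary data on $\partial\Omega\times I$ from $u_{h,\tau}^*\in \mathring V_{h,\tau}$ and $u^\dagger$ in the Lipschitz setting. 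Thus the equation residual, which is exactly the third term penalized in the discrete functional $J_{h,\tau}$, plays the role of the perturbation $G$ appearing in the stability theorems; this is the conceptual bridge between the discrete scheme and the continuous conditional stability.

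For the Lipschitz bound, I would first use Lemma \ref{lem:approx-uhs-fhs-Ihpih} to conclude $J_{h,\tau}(u_{h,\tau}^*,f_h^*)\le c\eta^2$, so each of its three summands is at most $c\eta^2$. Combining the first summand with the noise bound $\|q-u^\dagger\|_{H^1(I;L^2(\omega))}\le \delta\le \eta$ via the triangle inequality gives $\|\partial_t e_u\|_{L^2(Q_\omega)}\le c\eta$; analogously $\|Ae_u(t_0)\|_{L^2(\Omega)}\le c\eta$, and the third summand yields $\|G\|_{H^1(I;L^2(\Omega))}\le c\eta$ directly. Theorem \ref{thm:Lip-perturb-stability} applied to $(e_u,e_f)$ with this $G$ then delivers $\|e_f\|_{L^2(\Omega)}\le c\eta$, i.e., the first assertion.

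For the H\"older bound, I would repeat this strategy with Lemma \ref{lem:approx-uhs-fhs-Ihpih-hol} in place of Lemma \ref{lem:approx-uhs-fhs-Ihpih}, so each summand of $\widetilde J_{\widetilde\gamma,h,\tau}(\widetilde u_{h,\tau}^*,\widetilde f_h^*)$ is bounded by $c\widetilde\eta^{\,2}$. The presence of the gradient observation $r$ is essential: combining $\|\widetilde u_{h,\tau}^* - q\|_{H^1(I;L^2(\omega))}$ with $\|\nabla \widetilde u_{h,\tau}^* - r\|_{H^1(I;L^2(\omega))}$ and the noise bound produces $\|\partial_t\widetilde e_u\|_{L^2(I;H^1(\omega))}\le c\widetilde\eta$, as required by Theorem \ref{thm:hold-perturb-stab}. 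The new ingredient, compared to the Lipschitz case, is the a priori bound $M$. From the Tikhonov penalty,
\begin{equation*}
    \widetilde\gamma_u\bigl(\|\widetilde u_{h,\tau}^*\|_{H^1(I;H^2(\Omega))}^2 + \|\widetilde u_{h,\tau}^*\|_{H^2(I;H^1(\Omega))}^2\bigr)\le c\widetilde\eta^{\,2},
\end{equation*}
so together with the fixed bound on $\|u^\dagger\|$ from Assumption \ref{assum: dis-reg}, one can take $M\le c(1+\widetilde\gamma_u^{-1/2}\widetilde\eta)$ as the a priori constant for $\widetilde e_u$. Plugging everything into Theorem \ref{thm:hold-perturb-stab} then gives $\|\widetilde e_f\|_{L^2(\Omega_0)}\le cM^{1-\theta}\widetilde\eta^{\,\theta}\le c\widetilde\gamma_u^{-(1-\theta)/2}\widetilde\eta$.

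The main technical delicacy is this last book-keeping with $M$: one must verify that the $O(1)$ piece coming from $\|u^\dagger\|$ is dominated by $\widetilde\gamma_u^{-1/2}\widetilde\eta$ in the relevant asymptotic regime (small $h,\tau,\delta$ with $\widetilde\gamma_u$ appropriately scaled), so that the factor $M^{1-\theta}$ collapses cleanly to $c\widetilde\gamma_u^{-(1-\theta)/2}\widetilde\eta^{\,1-\theta}$ and yields the stated rate. The rest of the proof is a direct, mechanical application of the perturbation stability theorems and the consistency lemmas already established.
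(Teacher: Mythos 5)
Your proposal is correct and follows essentially the same route as the paper: identify the discrete residual $G=Lu_{h,\tau}^*-Rf_h^*$ (resp. $\widetilde G$) as the perturbation, bound the terms of the discrete functional via Lemmas \ref{lem:approx-uhs-fhs-Ihpih} and \ref{lem:approx-uhs-fhs-Ihpih-hol}, and invoke Theorems \ref{thm:Lip-perturb-stability} and \ref{thm:hold-perturb-stab} together with the penalty-induced bound $M\le c(1+\widetilde\gamma_u^{-1/2}\widetilde\eta)$. The only ``delicacy'' you flag is in fact automatic: since $\widetilde\eta^{\,2}\ge\widetilde\gamma_u$ by definition, one has $\widetilde\gamma_u^{-1/2}\widetilde\eta\ge1$, so the $O(1)$ contribution to $M$ is absorbed without any assumption on the asymptotic regime.
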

\begin{proof}
 Let $G\equiv G(t) = Lu_{h,\tau}^* - Rf_h^*\in H^1(I;L^2(\Omega))$. Using  equation \eqref{eqn:gov}, we obtain the decomposition
\begin{equation*}
R(f^\dagger - f_h^*) = L(u^\dagger-u_{h,\tau}^*) + G.
\end{equation*}
By Theorem \ref{thm:Lip-perturb-stability} and Lemma \ref{lem:approx-uhs-fhs-Ihpih}, we arrive at
\begin{align*}
\|f^\dagger-f^*_h\|^2_{L^2(\Omega)} & \leq
c\big(\|\partial_t(u^\dagger-u_{h,\tau}^*)\|^2_{L^2(Q_\omega)} + \|A(u^\dagger-u_{h,\tau}^*)(t_0)\|^2_{L^2(\Omega)}
+ \|G\|^2_{H^1(I;L^2(\Omega))} \big)
\\& \leq c\big(\|\partial_t(u^\dagger-q)\|^2_{L^2(Q_\omega)} + \|Au^\dagger(t_0)-p\|^2_{L^2(\Omega)}  + J_{h,\tau}(u^*_{h,\tau},f^*_h)\big)\leq c\eta^2.
\end{align*}
This completes the first estimate. Next, let $\widetilde G\equiv \widetilde G(t) = L\widetilde u_{h,\tau}^* - R\widetilde f_h^*\in H^1(I;L^2(\Omega))$. Then there holds
\begin{equation*}
    R(f^\dagger - \widetilde f_h^*) = L(u^\dagger-\widetilde u_{h,\tau}^*) + \widetilde G.
\end{equation*}
Repeating the argument in Theorems \ref{thm:hold-stab} and \ref{thm:hold-perturb-stab} leads to the following estimate for some $\mu>0$ and all $s>0$
\begin{align*}
    \|f^\dagger - \widetilde f_h^*\|_{L^2(\Omega_0)} \leq &ce^{-c\mu s}M + ce^{cs}\big(\|A(u^\dagger-\widetilde u_{h,\tau}^*)(t_0) \|_{L^2(\Omega)} + \|\partial_t (u^\dagger - \widetilde u_{h,\tau}^*)\|_{L^2(I; H^1(\omega))} \\
    &+ \|\widetilde G\|_{H^1(I;L^2(\Omega))}\big)
    \leq ce^{-c\mu s}M + ce^{cs}\widetilde\eta.
\end{align*}	
with
$$M:= \|u^\dagger - \widetilde u_{h,\tau}^*\|_{H^1(I;H^2(\Omega))} + \|u^\dagger - \widetilde u_{h,\tau}^*\|_{H^2(I;H^1(\Omega))}.$$
Thus Lemma \ref{lem:approx-uhs-fhs-Ihpih-hol} and Assumption \ref{assum: dis-reg} imply the following uniform bound on $M$:
\begin{equation*}
    M \leq c(1 + \widetilde\gamma_u^{-\frac{1}{2}} \widetilde\eta)\leq c\widetilde\gamma_u^{-\frac{1}{2}}\widetilde\eta.
\end{equation*}
Balancing these two terms by a suitable choice of $s$ gives the desired estimate.
\end{proof}
\begin{remark}\label{remark:rate}
    Theorem \ref{thm:err-estimate} provides a priori error bounds on the discrete approximations $f_h^*$ and $\widetilde f_h^*$ (with / without the boundary condition of $u^\dag$), which depend explicitly on the noise level $\delta$, the penalty parameters $\bm \gamma = (\widetilde\gamma_f,\widetilde\gamma_u)$ and the discretization parameters $h$ and $\tau$. Thus it provides a useful guideline for the a priori choice of the algorithm parameters. When the penalty parameters are chosen as  $\widetilde\gamma_f=O(\delta^2)$ and $\widetilde\gamma_u=O(\delta^2)$, we take discretization parameters by $h = O(\delta^{\frac{2}{2+2r}})$ and $\tau = O(\delta^{\frac{2}{2+2r}})$ for $r=0,1$, and get the following estimate with respect to $\delta$
    \begin{equation*}
        \|f^\dag - f_h^*\|_{L^2(\Omega)} \leq c\delta \quad\mbox{and} \quad \|f^\dag - \widetilde f_h^*\|_{L^2(\Omega_0)} \leq c\delta^\theta.
    \end{equation*}
    These error bounds agree with the Lipschitz and H\"{o}lder stability estimates in Theorems \ref{thm:Lip-perturb-stability} and \ref{thm:hold-perturb-stab}. The stringent regularity requirement of the FE space in Theorem \ref{thm:err-estimate} is due to the nature of the conditional stability estimates; cf. Theorems \ref{thm:Lip-perturb-stability} and \ref{thm:hold-perturb-stab}.
\end{remark}

\begin{remark}
    Theorem \ref{thm:err-estimate} extends several existing works. Huhtala et al \cite{huhtala2014priori} and Chen et al \cite{chen2022stochastic}
    investigated the inverse problem of recovering the source from a finite number of density field measurements in elliptic and parabolic equations, respectively. Numerically, they employed the Galerkin FEM to discretize Tikhonov regularized model and proved an a priori error estimate of the discrete approximation. The analysis in both works relies on the well-definedness of the forward problem.
    When the initial / boundary conditions are unknown, Huang et al \cite[Theorem 2]{huang2020stability} established a novel H\"{o}lder conditional stability based on the given partial boundary data, but did not investigate the numerical issues.
\end{remark}

\section{Numerical experiments and discussions}\label{sec:numer}

Now we present numerical results of the reconstructed source $f$ with and without boundary data of $u$ separately using the observation data in \eqref{data: zero boundary} and \eqref{data: unknown boundary}, respectively. We investigate the convergence order of the approximations $f_h^*$ and $\widetilde f_h^*$, and the impact of the noise on the reconstructions. We discretize the state $u$ using Hermite elements and the source $f$ by continuous piecewise linear elements. The noise is added to all observation data pointwisely by $z^\delta = z^\dagger(1+\delta\xi(z))$, where $\xi(z)$ follows an i.i.d. standard normal distribution. The accuracy of the discrete approximation $f_h^*$ is measured by the $L^2(\Omega)$ error $e:=\|f^\dagger-f^*_h\|_{L^2(\Omega)}$.

\begin{example}\label{exam1}
Let $t_0=0.5$, $\Omega=(0,1)$, $I=(0.4,0.6)$ and $\omega=(0.2,0.8)$. Consider the elliptic operator $Av:=-v'\!'$ and the exact state $u^\dagger=\sin(\pi t)\sin(\pi x)$ with the source $F(x,t) \equiv R(t)f^\dag(x)=\big[\pi^2\sin(\pi t)+\pi\cos(\pi t)\big]\sin(\pi x)$.
\end{example}

We discretize the space variable using Hermite elements. The reconstruction errors $e$ for various mesh size $h$ and time step size $\tau$ are given in Table \ref{exam1:table}. When either the temporal and spatial meshes are simultaneously refined or only the spatial mesh is refined, the empirical rate  agrees well with the theoretical prediction in Theorem \ref{thm:err-estimate}. There is a small loss in the convergence order when the space-time mesh is very refined ($h=\frac{1}{120}, \tau=\frac{1}{120}$). This is caused by the bad conditioning of the discrete problem (due to the use of Hermite element). The convergence behavior in time is more complex: the spatial mesh has to be sufficiently fine in order to observe the convergence order in time ($h=\frac{1}{200}$), since the error in discretizing $f$ with linear elements dominates the time discretization error with Hermite elements. Moreover, there is a loss of the convergence order when the time mesh $\tau$ reaches $\frac{1}{30}$, again due to the bad conditioning of the discrete problem. Theorem \ref{thm:err-estimate} indicates a second-order convergence when $f$ and $u$ are smooth. Indeed, the error decomposition \eqref{ineq:error3term} shows that it is mainly governed by the term $\|L P_hP_\tau u^\dagger - R\pi_h f^\dagger\|_{H^1(I;L^2(\Omega))}^2$, which bounds the term $\|G\|_{H^1(I;L^2(\Omega))}$ (see the proof of Theorem \ref{thm:err-estimate}). This is numerically illustrated in Table \ref{exam1:table2}: $\|G\|_{H^1(I;L^2(\Omega))}$ indeed converges at the second order, but $\|G\|_{L^2(I;L^2(\Omega))}$ converges much faster (close to 4). Thus, an inverse type estimate might hold in a norm weaker than the $H^1(\Omega)$ for the PDE residual term.

\begin{table}[hbt!]
	\centering
\begin{threeparttable}
	\caption{\label{exam1:table} Example \ref{exam1}: The reconstruction error $e$ and convergence rates.}
	\begin{tabular}{cc|cc||cc|cc||cc|cc}
	\toprule
	$h$ & $\tau$ & $e$ & order & $h$ & $\tau$ & $e$ & order & $h$ & $\tau$ & $e$ & order\\
    \midrule
     $\frac{1}{10}$ & $\frac{1}{10}$ & 2.666e-3 & &$\frac{1}{10}$& $\frac{1}{100}$ & 2.629e-3 &&$\frac{1}{200}$&$\frac{1}{10}$&5.23e-4&\\\midrule
     $\frac{1}{20}$ & $\frac{1}{20}$ & 6.528e-4 & 2.030 &$\frac{1}{20}$&$ \frac{1}{100}$ &6.520e-4&2.011&$\frac{1}{200}$&$\frac{1}{20}$&3.43e-5&3.933\\\midrule
     $\frac{1}{40}$ & $\frac{1}{40}$ & 1.627e-4 & 2.005 &$\frac{1}{40}$&$\frac{1}{100}$&1.627e-4&2.003&$\frac{1}{200}$&$\frac{1}{30}$&7.75e-6&3.666\\\midrule
     $\frac{1}{80}$ & $\frac{1}{80}$ & 4.066e-5 & 2.000 &$\frac{1}{80}$&$\frac{1}{100}$&1.041e-4&2.001&$\frac{1}{200}$&$\frac{1}{40}$&8.83e-6&-0.453\\\midrule
     $\frac{1}{120}$ & $\frac{1}{120}$ & 1.896e-5 & 1.881 &$\frac{1}{120}$&$\frac{1}{100}$&4.065e-5&2.000&$\frac{1}{200}$&$\frac{1}{50}$&1.03e-5&-0.687\\
	\bottomrule
	\end{tabular}
\end{threeparttable}
\end{table}

\begin{table}[hbt!]
	\centering
\begin{threeparttable}
	\caption{\label{exam1:table2} Example \ref{exam1}: The convergence order of $ G$ in time under different norms.}
	\begin{tabular}{cc|c|c||c|c||c|c}
	\toprule
	$h$ & $\tau$ & $e$ & order & $\|G\|_{H^1(I;L^2(\Omega))}$ & order & $\|G\|_{L^2(I;L^2(\Omega))}$ & order\\
    \midrule
     $\frac{1}{200}$&$\frac{1}{10}$&5.23e-4&&5.692e-2&&4.056e-4&\\\midrule
     $\frac{1}{200}$&$\frac{1}{20}$&3.43e-5&3.933&1.395e-2&2.029&2.698e-5&3.910\\\midrule
     $\frac{1}{200}$&$\frac{1}{30}$&7.75e-6&3.666&6.154e-3&2.018&6.387e-6&3.554\\
	\bottomrule
	\end{tabular}
\end{threeparttable}
\end{table}

Next, we show the reconstructions without boundary information of $u$ using also the observation data $\partial_tu|_{\omega\times (t_0-\zeta, t_0+\zeta)}$ in Fig. \ref{exam1:fig:holder}. In the absence of the regularization, there are significant perturbations on the results as the noise level increases, which can be effectively mitigated by suitable regularization. In the experiments, we found the reconstruction is not very sensitive to $\widetilde{\gamma}_u$, so we set it a bit smaller.

\begin{figure}[hbt!]
\centering\setlength{\tabcolsep}{2pt}
	\begin{tabular}{cccc}
    \includegraphics[width=0.20\textwidth, clip]{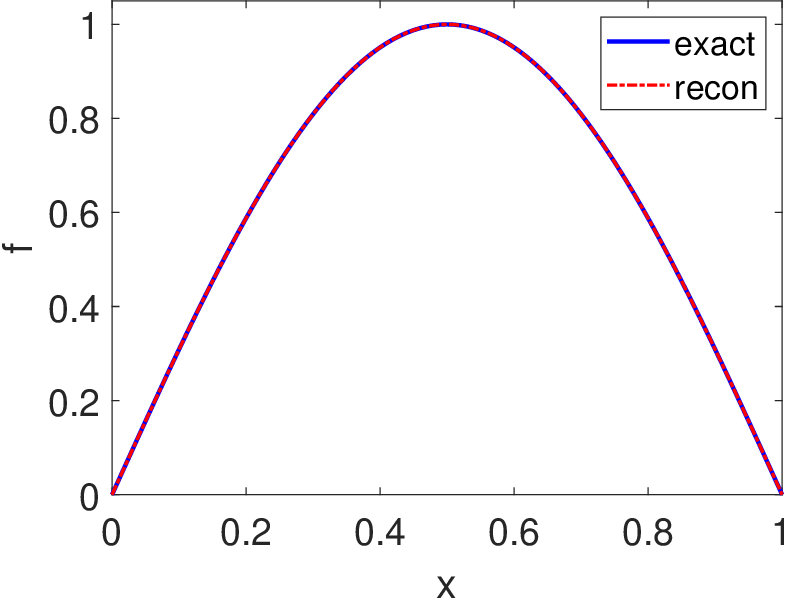}&
    \includegraphics[width=0.20\textwidth, clip]{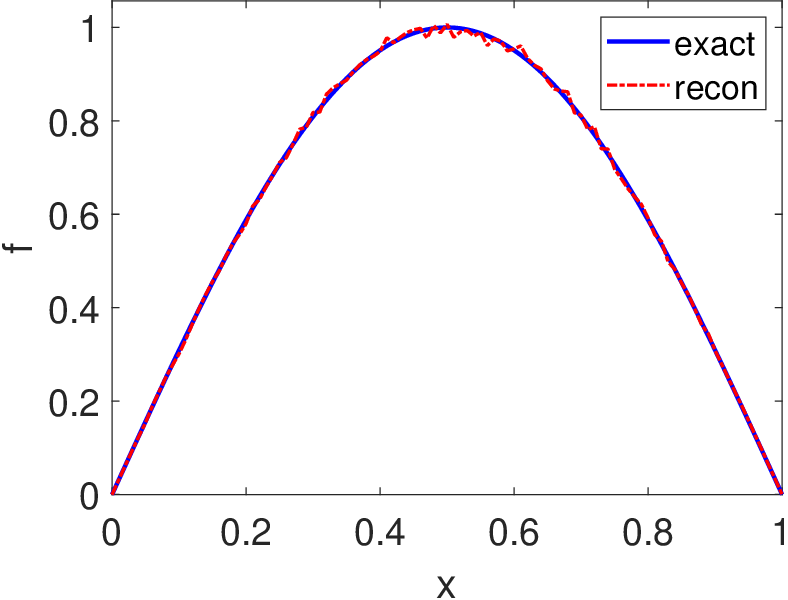}& \includegraphics[width=0.20\textwidth, clip]{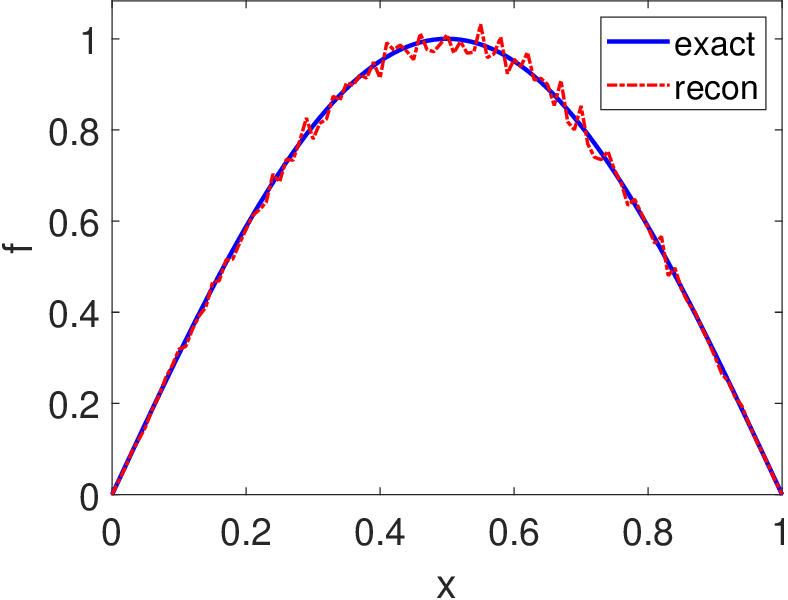}&
    \includegraphics[width=0.20\textwidth, clip]{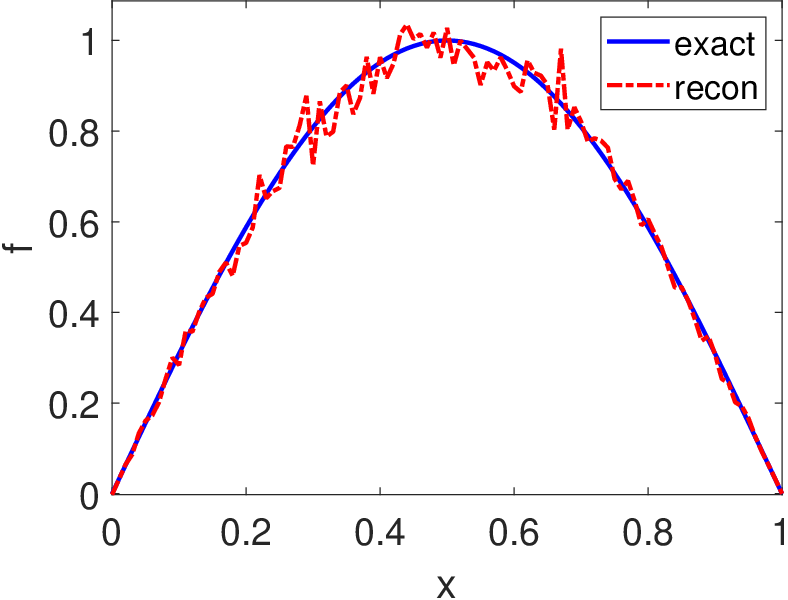}  \\
    &\includegraphics[width=0.20\textwidth, clip]{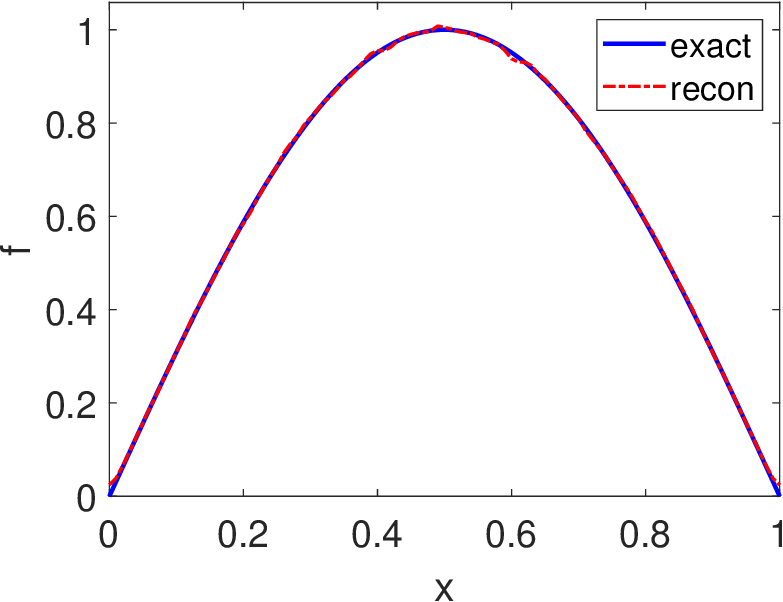}& \includegraphics[width=0.20\textwidth, clip]{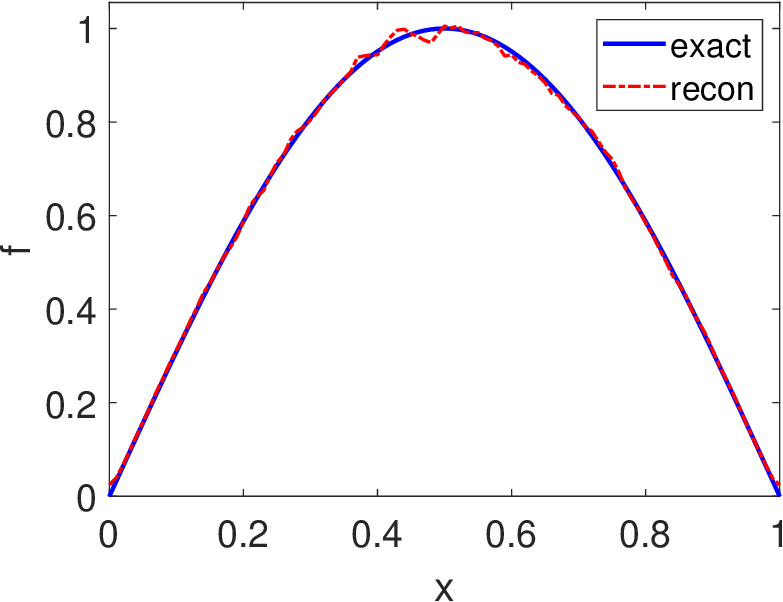}&
    \includegraphics[width=0.20\textwidth, clip]{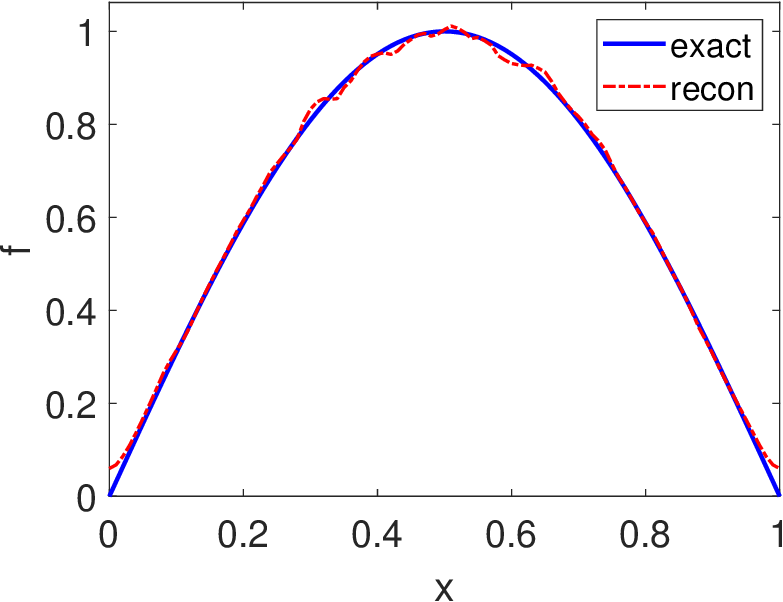} \\
    (a) $\delta=0\%$ & (b) \makecell{$\delta=2\%$\\($\widetilde\gamma_f=10^{-3}$, $\widetilde\gamma_u=10^{-4}$)} & (c) \makecell{$\delta=5\%$\\($\widetilde\gamma_f=10^{-3}$, $\widetilde\gamma_u=10^{-4}$)} & (d) \makecell{$\delta=10\%$\\($\widetilde\gamma_f=5\times 10^{-3}$, \\$\widetilde\gamma_u=5\times 10^{-4}$)}
\end{tabular}
\caption{\label{exam1:fig:holder} Example \ref{exam1}: The reconstructions without boundary information, at four noise levels without regularization (top) and with regularization (bottom). The numbers in the brackets are regularization parameters.}
\end{figure}

\begin{example}\label{exam2}
    Let $t_0=1.0$, $\Omega=(0,1)$, $I=(0.5,1.5)$ and $\omega=(0.2,0.8)$. Consider the elliptic operator $Av:=-v'\!'$, and the source is given by $f=\pi_hg$, with $g=\sin(\pi x)$.
\end{example}

The exact source $\pi_hg\in H^1(\Omega)$ but not in $H^2(\Omega)$, and the exact state $u$ is not available analytically. So we first generate the data by solving the forward problem on a finer space-time mesh. Since $f$ is explicitly approximated using linear elements, there is no approximation error for $\pi_h g$.
Due to the ill-posedness of the problem, the results are sensitive to the presence of data noise, and the convergence in the time direction is not steady but the empirical rate is still close to 4, cf. Table \ref{exam2:table}.

\begin{table}[htb!]
	\centering
    \begin{threeparttable}
	\caption{\label{exam2:table} Example \ref{exam2}: The reconstruction error $e$ and convergence order. $\operatorname{Cond}(A)$ denotes the condition number of the system matrix.}
	\begin{tabular}{c|ccccc}
	\toprule
	$h$  & $\frac{1}{20}$ & $\frac{1}{20}$  & $\frac{1}{20}$ & $\frac{1}{20}$ & $\frac{1}{20}$ \\
	$\tau$  & $\frac{1}{12}$ & $\frac{1}{18}$ & $\frac{1}{24}$ & $\frac{1}{36}$ & $\frac{1}{72}$\\
	\midrule
	$\operatorname{Cond}(A)$ & 7.113e12 & 1.878e13 & 3.999e13 & 1.212e14 & 7.743e14 \\
    \midrule
	$e$ ($\delta=0\%$) & 6.800e-4 & 1.464e-4 & 4.752e-5 & 9.731e-6 & 8.634e-7 \\
	\midrule
    order & & 3.787 & 3.912 & 3.911 & 3.495 \\
    \midrule
    $e$ ($\delta=0.2\%$) & 2.598e-1 & 1.384e-1 & 1.729e-1 & 1.120e-1 &2.806e-1 \\
    \midrule
    $e$ ($\delta=0.5\%$) & 6.367e-1 & 3.502e-1 & 3.801e-1 & 5.121e-1 & 6.758e-1 \\
    \midrule
    $e$ ($\delta=1.0\%$) & 1.037e0 & 8.476e-1 & 8.371e-1 & 1.131e0 & 1.167e0 \\
    \bottomrule
	\end{tabular}
\end{threeparttable}
\end{table}

\begin{example}\label{exam3}
Let $t_0=0.5$, $\Omega=(0,1)^2$, $I=(0.4,0.6)$ and $\omega=(0.2,0.8)^2$. Consider the elliptic operator $Av:=-\Delta v$ and the exact solution $u^\dagger=\sin(\pi t)\sin(\pi x)\sin(\pi y)$ with the source $F(x,y,t) \equiv R(t)f^\dag(x,y)=\big[2\pi^2\sin(\pi t)+\pi\cos(\pi t)\big]\sin(\pi x)\sin(\pi y)$.
\end{example}

We employ Argyris elements \cite{MASAYUKI1993381} to discretize the state $u$ in space. Argyris elements are not affine equivalent so the mesh must be a series of congruent triangles to facilitate the use of quadrature rules. We show the reconstructions with exact observation data in Fig. \ref{exam3:fig} with $h=\frac{1}{40}$ and $\tau=\frac{1}{10}$, which are fairly accurate, and investigate the convergence order in Table \ref{exam3:table}.

\begin{figure}[hbt!]
\centering\setlength{\tabcolsep}{2pt}
\begin{tabular}{ccc}
\includegraphics[width=0.3\textwidth, clip]{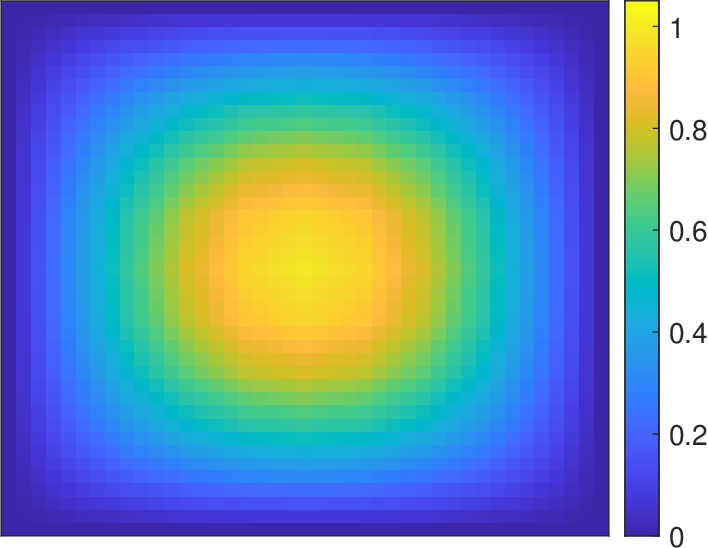} &
\includegraphics[width=0.3\textwidth, clip]{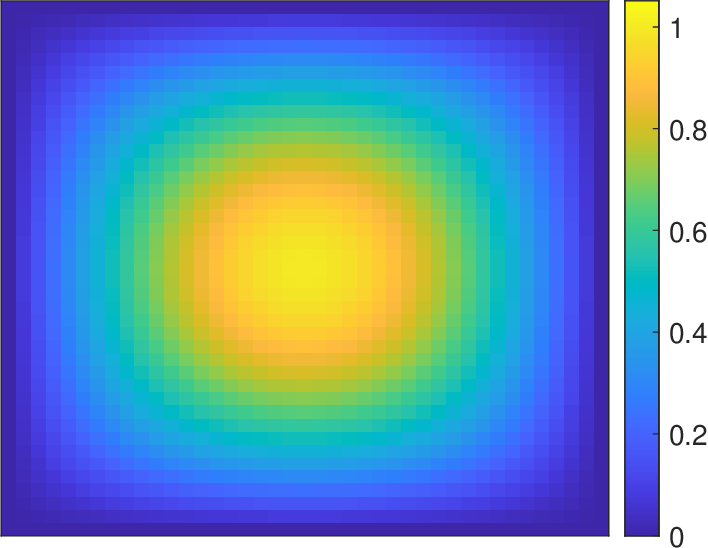} &
\includegraphics[width=0.3\textwidth, clip]{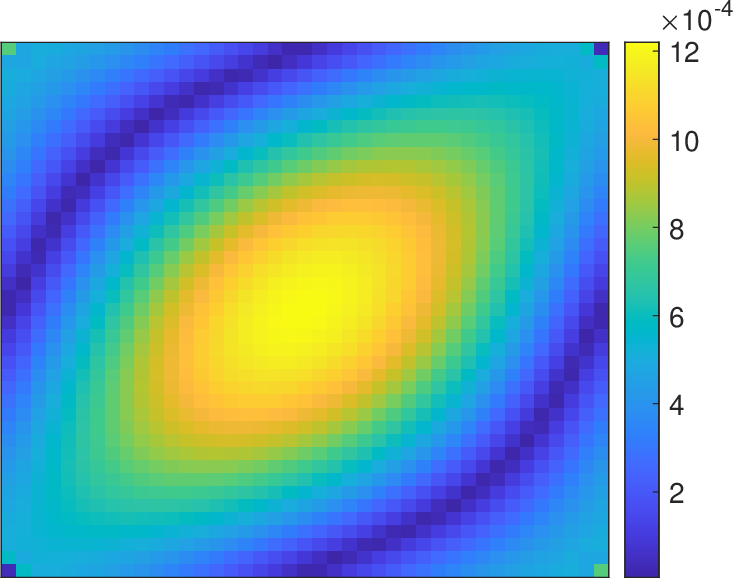}\\
exact & reconstruction & error \\
\end{tabular}
\caption{\label{exam3:fig}
Example \ref{exam3}: The reconstructed source $\hat f$  with exact data and boundary condition on $u$.}
\end{figure}

\begin{table}[htb!]
	\centering
\begin{threeparttable}
\caption{\label{exam3:table} Example \ref{exam3}: The reconstruction error $e$ and convergence order in time and space, with boundary information.}
	\begin{tabular}{c|cccc}
	\toprule
	$h$  & $\frac{1}{4}$ & $\frac{1}{8}$  & $\frac{1}{12}$ & $\frac{1}{16}$ \\
	$\tau$  & $\frac{1}{10}$ & $\frac{1}{20}$ & $\frac{1}{30}$ & $\frac{1}{40}$\\
	\midrule
	$e$ & 2.768e-2 & 6.593e-3 & 2.891e-3 & 1.618e-3 \\
	\midrule
    order & & 2.070 & 2.033& 2.018 \\
    \bottomrule
\end{tabular}
\end{threeparttable}	\end{table}

Next we show the reconstructions for Example \ref{exam3} without the boundary condition on $u$. The convergence behavior is similar to the 1-d case, but the example is much more ill-posed: with 1\% noise in the data, the reconstruction process completely fails. The regularization partly alleviates the issue, but the difference between the exact and recovered one is still large, cf. Fig. \ref{exam3:fig:holder}.

To verify the convergence rate of the discrete approximation $f_h^*$ (i.e., with the boundary condition of $u$) with respect to the noise level $\delta$, cf. Remark \ref{remark:rate}, we fix the time and spatial grids and vary the noise level $\delta$. The results in Fig. \ref{fig:deltaorder} indicate the first-order convergence, i.e., $O(\delta)$, which is consistent with the theoretical prediction in Theorem \ref{thm:err-estimate}. 

\begin{figure}[hbt!]
	\centering\setlength{\tabcolsep}{2pt}
	\begin{tabular}{cccc}
    \includegraphics[width=0.22\textwidth, clip]{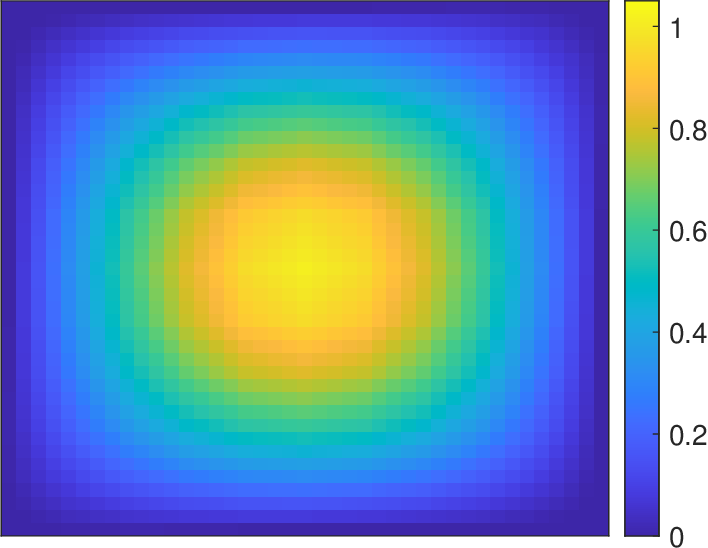}&
    \includegraphics[width=0.22\textwidth, clip]{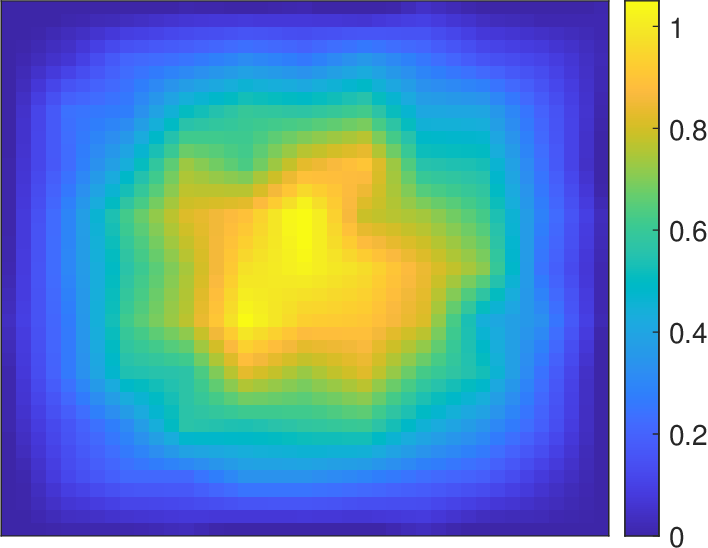}& \includegraphics[width=0.22\textwidth, clip]{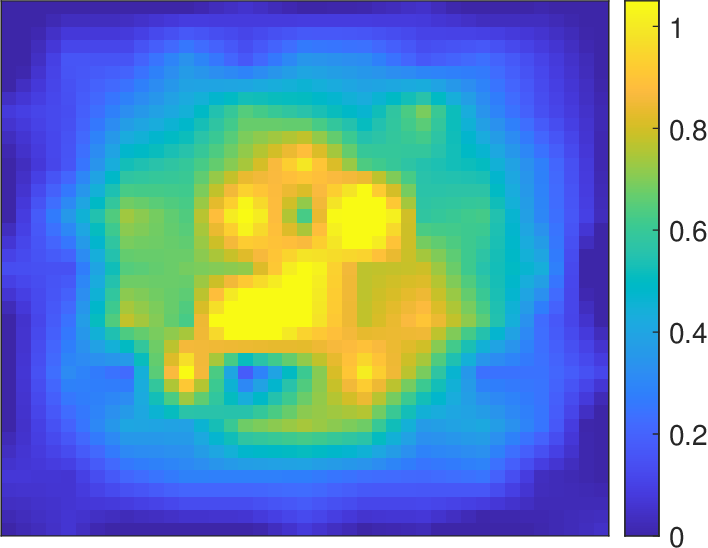}&
    \includegraphics[width=0.22\textwidth, clip]{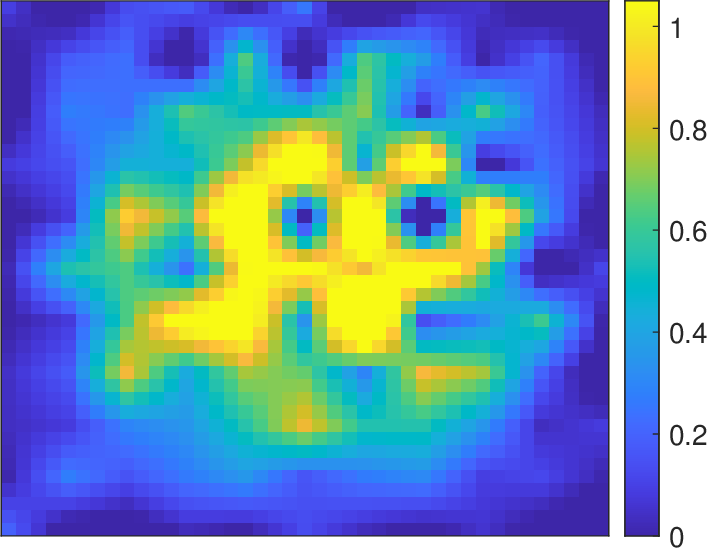}  \\
    \includegraphics[width=0.22\textwidth, clip]{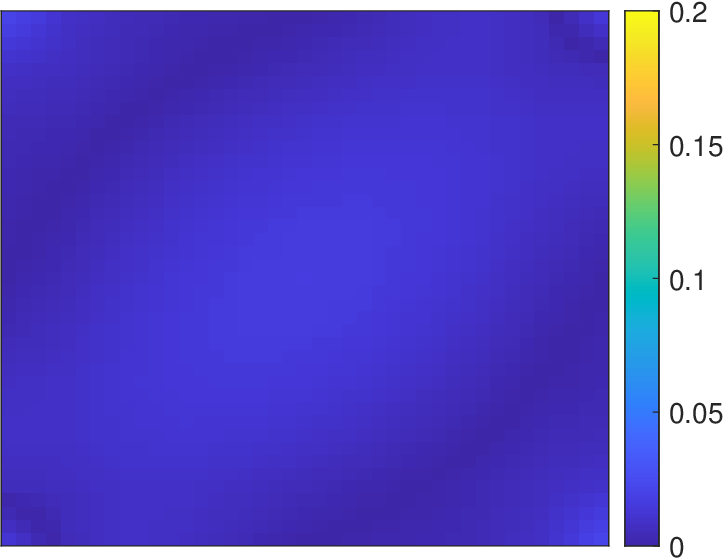}&
    \includegraphics[width=0.22\textwidth, clip]{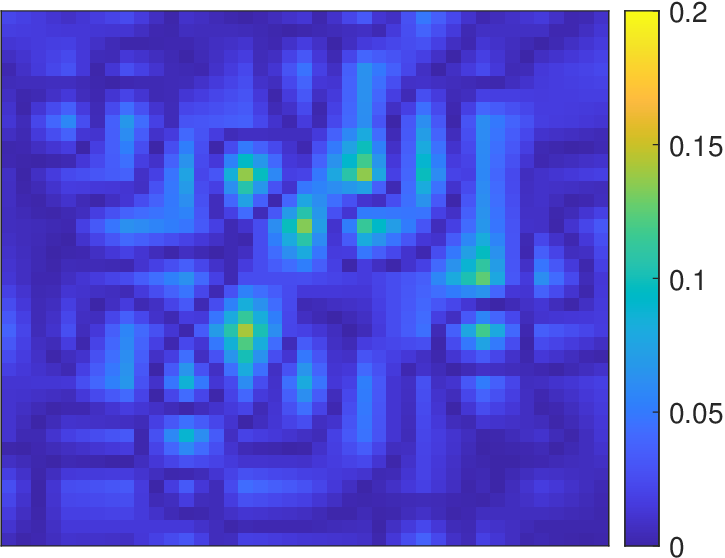}& \includegraphics[width=0.22\textwidth, clip]{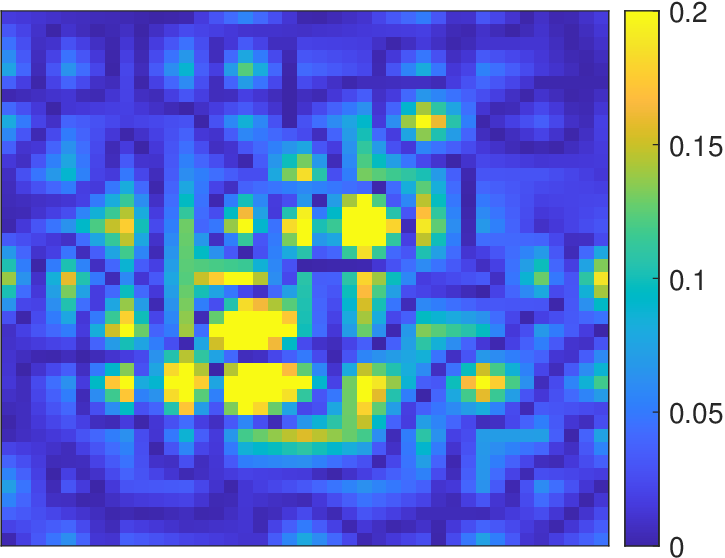}&
    \includegraphics[width=0.22\textwidth, clip]{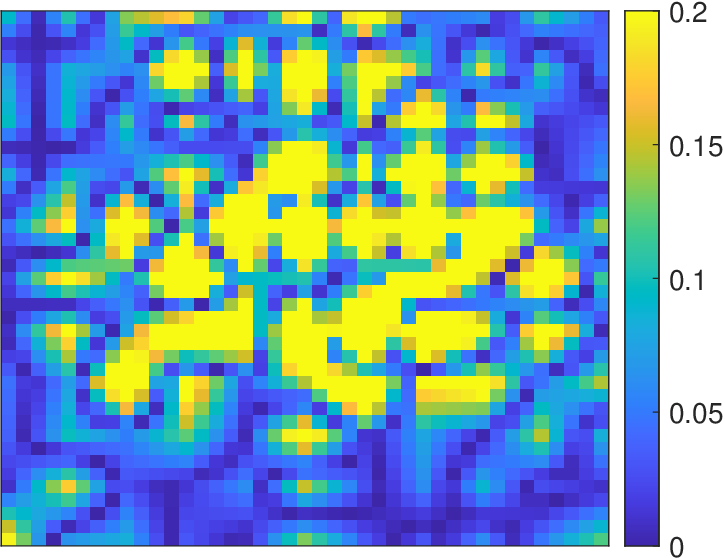} \\
    &\includegraphics[width=0.22\textwidth, clip]{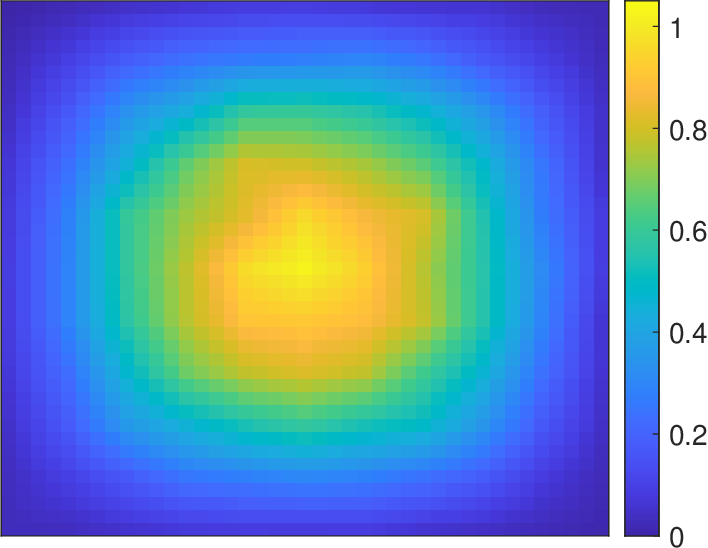}& \includegraphics[width=0.22\textwidth, clip]{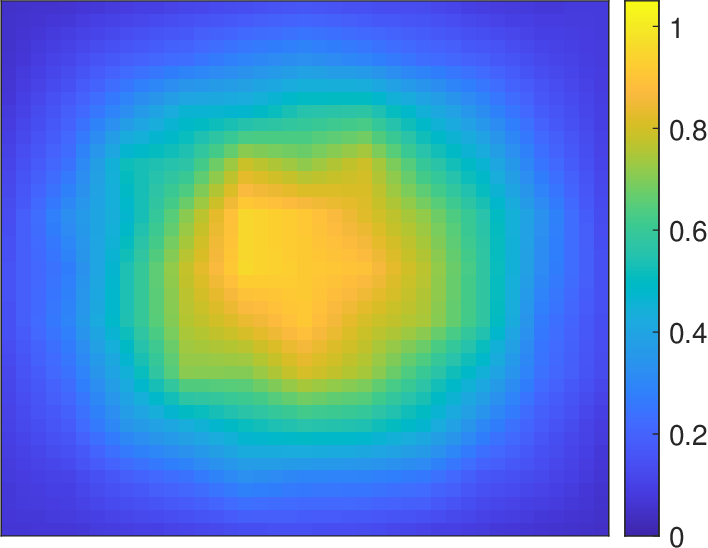}&
    \includegraphics[width=0.22\textwidth, clip]{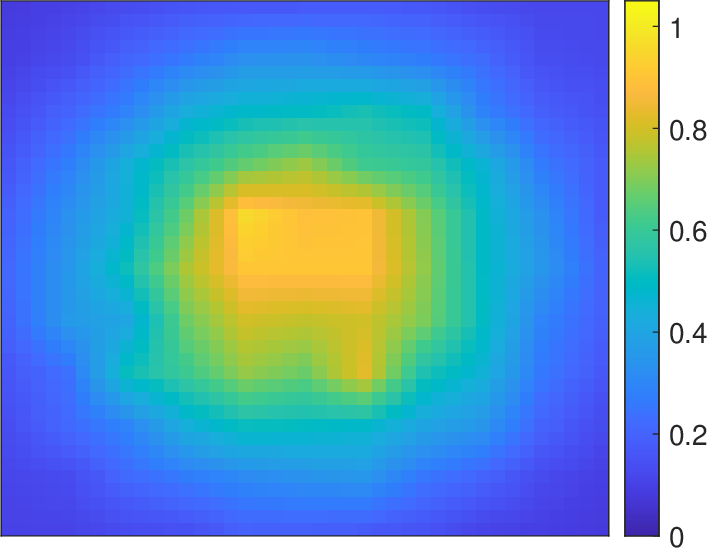} \\
    &\includegraphics[width=0.22\textwidth, clip]{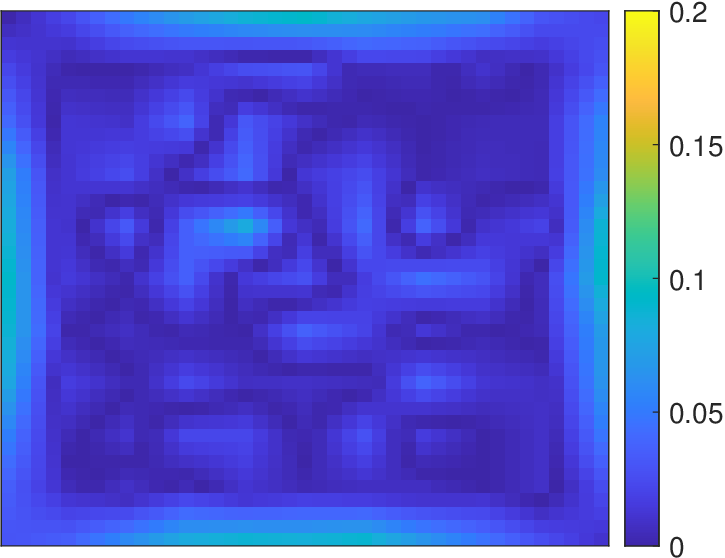}& \includegraphics[width=0.22\textwidth, clip]{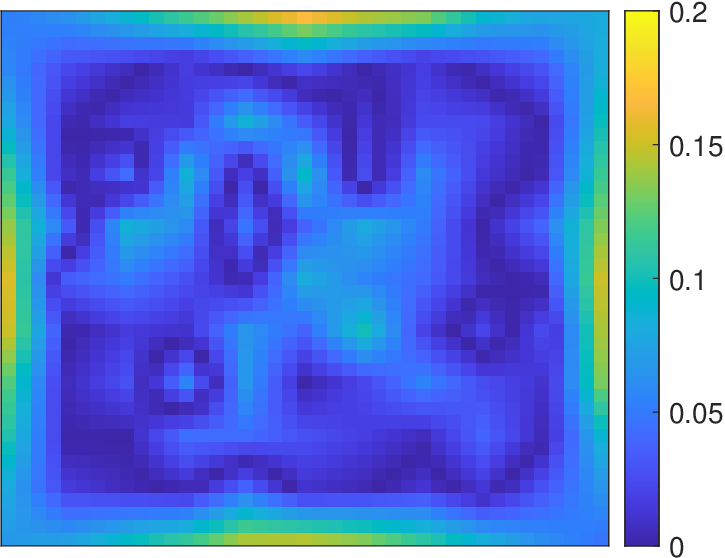}&
    \includegraphics[width=0.22\textwidth, clip]{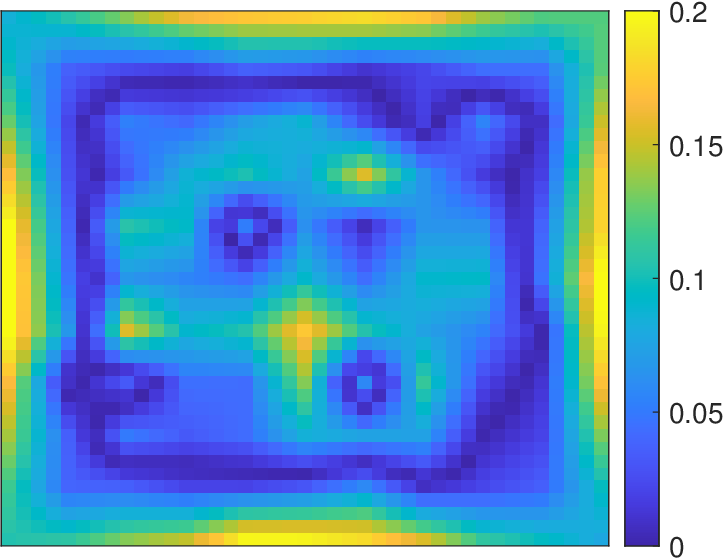} \\
    (a) $\delta=0\%$ & (b) \makecell{$\delta=0.2\%$\\($\widetilde\gamma_f=8\times10^{-2}$,\\$\widetilde\gamma_u=8\times10^{-4}$)} & (c) \makecell{$\delta =0.5\%$\\($\widetilde\gamma_f=2\times10^{-1}$,\\$\widetilde\gamma_u=2\times10^{-3}$)} & (d) \makecell{$\delta= 1.0\%$\\($\widetilde\gamma_f=4\times10^{-1}$,\\$\widetilde\gamma_u=4\times10^{-3}$)}
\end{tabular}
\caption{\label{exam3:fig:holder} Example \ref{exam3}: The reconstructions without boundary information, at four noise levels, without regularization (row 1) and their pointwise error (row 2), and the reconstruction with regularization (row 3) and their pointwise error (row 4).}
\end{figure}

\begin{figure}[hbt!]
	\centering\setlength{\tabcolsep}{2pt}
	\begin{tabular}{cc}
    \includegraphics[height=5cm]  {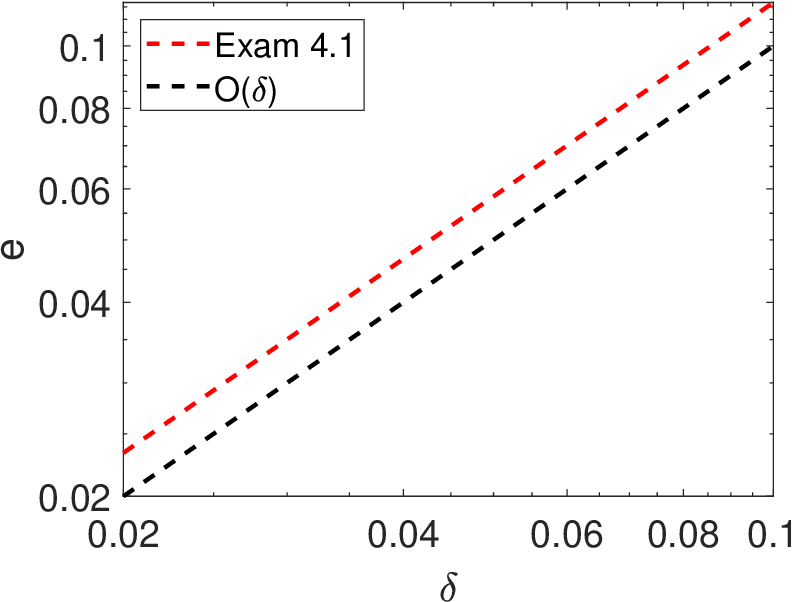}&
    \includegraphics[height=5cm]  {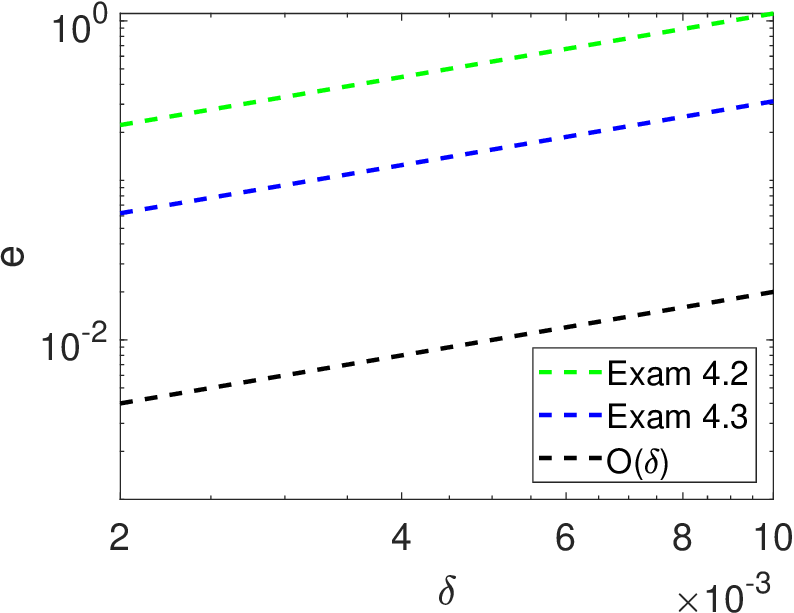}
	\end{tabular}
\caption{\label{fig:deltaorder}
The convergence order with respect to the noise level $\delta$ for the three examples, with  $h=\frac{1}{100},\tau=\frac{1}{50}$ for Example \ref{exam1}, $h=\frac{1}{20},\tau=\frac{1}{18}$ for Example \ref{exam2} and $h=\frac{1}{12},\tau=\frac{1}{20}$ for Example \ref{exam3}.}
\end{figure}

\section{Conclusion}
In this work, we have investigated an inverse source problem in a parabolic equation based on two types of observational data. Using Carleman estimates, we have established two new conditional stability estimates including the presence of source perturbation. Numerically, we have developed a regularized scheme, using the $H^2$ conforming FEM in both space and time to approximate the state and the usual P1 elements to approximate the source. Combined with the continuous stability estimates in the perturbation form, we derive error estimates and provide several numerical experiments to complement the theoretical analysis. The intricate structure of high-order conforming elements poses substantial challenges to practical implementation. In view of remarkable approximation capabilities of deep neural networks, one promising future direction is to employ neural networks to discretize the least-squares loss.

\appendix
\section{The proof of the estimate \eqref{estimate-w}}\label{append}

Following the argument in \cite{yamamoto2009carleman}, we give a detailed proof of the estimate \eqref{estimate-w}.
\begin{proof}
By the direct expansion of the elliptic operator $A$ and a perturbation argument, it suffices to prove that the estimate holds for the following operator in the non-divergence form for any $v\in C_0^\infty(Q)$
    \begin{equation*}
        L_0v:= \partial_t v - \sum_{i,j=1}^na_{ij}\partial^2_{x_ix_j}v = f, \quad \mbox{ in } Q.
    \end{equation*}
    Let $w = ve^{s\vartheta}$, and define an operator $P$ by
    \begin{equation}\label{equation-Pw}
        Pw = e^{s\vartheta}L_0(e^{-s\vartheta} w) = e^{s\vartheta}L_0v \equiv e^{s\vartheta}f.
    \end{equation}
    Then with $\sigma:=\sum_{i, j=1}^n a_{i j}\left(\partial_{x_i} d\right)\left(\partial_{x_j} d\right)$,  direct computation gives
\begin{equation}\label{expansion-Pw}
        \begin{split}
          P w& =\partial_t w - \sum_{i, j=1}^n a_{i j} \partial^2_{x_ix_j} w+2 s \lambda \vartheta \sum_{i, j=1}^n a_{i j}(\partial_{x_i} d)( \partial_{x_j} w) - s^2 \lambda^2 \vartheta^2 \sigma w \\
    & \quad +\Big[s \lambda^2 \vartheta \sigma + s \lambda \vartheta  \big(\sum_{i, j=1}^n a_{i j} \partial^2_{x_ix_j} d\big)-s \lambda \vartheta \left(\partial_t \psi\right)\Big]w \\
    & =:\partial_t w - \sum_{i, j=1}^n a_{i j} \partial^2_{x_i,x_j} w+2 s \lambda \vartheta \sum_{i, j=1}^n a_{i j}(\partial_{x_i} d) (\partial_{x_j} w) - s^2 \lambda^2 \vartheta^2 \sigma w + A_1 w.
        \end{split}
    \end{equation}
Next we decompose the function $Pw$ into two parts $ P_1w$ and $ P_2w$:
\begin{equation}\label{decom-opera-P}
\left\{
\begin{aligned}
 P_1w & = - \sum_{i, j=1}^n a_{i j} \partial^2_{x_i,x_j} w - s^2 \lambda^2 \vartheta^2 \sigma w + A_1 w,\\
 P_2w & = \partial_t w +2 s \lambda \vartheta \sum_{i, j=1}^n a_{i j}(\partial_{x_i} d) (\partial_{x_j} w).
\end{aligned}
\right.
\end{equation}
Then by the construction of $w$, we get
\begin{equation*}
     \int_Q |f|^2e^{2s\vartheta}\ \rmd x\rmd t = \int_Q|Pw|^2\ \rmd x\rmd t  \geq 2\int_QP_1wP_2w\ \rmd x\rmd t + \int_Q|P_2w|^2\ \rmd x\rmd t =: 2{\rm I} + {\rm II}.
\end{equation*}
We first consider the term ${\rm I}$. By the definitions of $P_1$ and $P_2$, it follows
\begin{align*}
{\rm I} &= {\rm I}_1 + {\rm I}_2 + {\rm I}_3 + {\rm I}_4 + {\rm I}_5 + {\rm I}_6.
\end{align*}
with the six terms $({\rm I}_i)_{i=1}^6$ given respectively by
\begin{align*}
 {\rm I}_1 & = - \sum_{i, j=1}^n \int_Q a_{i j} \partial^2_{x_i,x_j} w \partial_t w  \ \rmd x\rmd t ,\\
 {\rm I}_2 & = - \sum_{i, j=1}^n \int_Q a_{i j} \partial^2_{x_i,x_j} w 2 s \lambda \vartheta \sum_{k, \ell=1}^n a_{k\ell}(\partial_{x_k} d) (\partial_{x_\ell} w) \ \rmd x\rmd t, \\
{\rm I}_3 & = -\int_Q s^2 \lambda^2 \vartheta^2 \sigma w\partial_t w \ \rmd x\rmd t, \\
 {\rm I}_4 & = -\int_Q 2 s^3 \lambda^3 \vartheta^3 \sigma w \sum_{i, j=1}^n a_{i j}(\partial_{x_i} d)(\partial_{x_j} w)\ \rmd x\rmd t, \\
 {\rm I}_5 & =  \int_Q\left(A_1 w\right)\partial_t w\ \rmd x\rmd t, \\
 {\rm I}_6 & = \int_Q\left(A_1 w\right) 2 s \lambda \vartheta \sum_{i, j=1}^n a_{i j}(\partial_{x_i} d)(\partial_{x_j} w) \ \rmd x\rmd t.
\end{align*}
Now we bound the six terms separately. By integration by parts, the symmetry of the coefficient matrix $[a_{ij}]_{n\times n}$ and the Cauchy-Schwarz inequality, there holds for $w\in C_0^\infty(Q)$,
\begin{align*}
|{\rm I}_1| &= \Big|\sum_{i, j=1}^n \int_Q (\partial_{x_j}a_{i j}) \partial_{x_i} w \partial_t w  \ \rmd x\rmd t +  \sum_{i, j=1}^n \int_Q a_{i j} \partial_{x_i} w (\partial_{x_j}\partial_t w)  \ \rmd x\rmd t\Big| \\ &
= \Big|\sum_{i, j=1}^n \int_Q (\partial_{x_j}a_{i j}) \partial_{x_i} w \partial_t w  \ \rmd x\rmd t +  \sum_{i=1}^n \int_Q a_{i i} \partial_{x_i} w (\partial_{x_i}\partial_t w)  \ \rmd x\rmd t \\& \qquad + \sum_{\substack{i,j=1 \\ i > j}}^n \int_Q a_{i j} \Big[\partial_{x_i} w (\partial_{x_j}\partial_t w) + \partial_{x_j} w (\partial_{x_i}\partial_t w)\Big] \ \rmd x\rmd t\Big| \\
& = \Big|\sum_{i, j=1}^n \int_Q (\partial_{x_j}a_{i j}) \partial_{x_i} w \partial_t w  \ \rmd x\rmd t +  \frac12\sum_{i,j=1}^n \int_Q a_{i j} \partial_t(\partial_{x_j} w \partial_{x_i} w) \ \rmd x\rmd t\Big| \\
& = \Big|\sum_{i, j=1}^n \int_Q (\partial_{x_j}a_{i j}) \partial_{x_i} w \partial_t w  \ \rmd x\rmd t\Big| \leq c\int_{Q}s\vartheta\lambda|\nabla w|^2\ \rmd x\rmd t + c\int_Q \frac{1}{s\lambda\vartheta}|\partial_t w|^2\ \rmd x\rmd t.
\end{align*}
Similarly, using the basic estimates $|\partial_t\psi| = 2\beta|t-t_0| \leq c_{\beta,\zeta}$ and $|\sigma| \leq \mu^{-1} \|d\|^2_{C^1(\overline{\Omega})}\leq c_{\mu,d}$, we have
\begin{align*}
    |{\rm I}_3| & = \Big|\frac12\int_Q s^2 \lambda^2 \vartheta^2 \sigma \partial_t (w)^2 \ \rmd x\rmd t\Big| = \Big|\int_Q s^2 \lambda^2 \vartheta (\partial_t\vartheta) \sigma w^2 \ \rmd x\rmd t\Big| \\
    & = \Big|\int_Q s^2 \lambda^3 \vartheta^2 (\partial_t\psi) \sigma w^2 \ \rmd x\rmd t\Big| \leq c\int_Q s^2 \lambda^3 \vartheta^2 w^2 \ \rmd x\rmd t.
\end{align*}
Repeating the argument yields
\begin{align*}
    |{\rm I}_5| & = \Big|\frac12\int_Q A_1 \partial_t (w)^2 \ \rmd x\rmd t\Big| = \Big|\frac12\int_Q \partial_t A_1 w^2 \ \rmd x\rmd t\Big| \leq c\int_{Q}s\lambda^3\vartheta w^2 \ \rmd x\rmd t, \\
     |{\rm I}_6| & = \Big|\sum_{i, j=1}^n\int_QA_1 s \lambda \vartheta  a_{i j}(\partial_{x_i} d)\partial_{x_j} (w)^2 \ \rmd x\rmd t \Big| \\& = \Big|\sum_{i, j=1}^n\int_Q s \lambda\partial_{x_j}\Big[A_1  \vartheta  a_{i j}(\partial_{x_i} d)\Big] w^2 \ \rmd x\rmd t \Big| \leq c\int_Q s^2\lambda^4\vartheta^2 w^2 \ \rmd x\rmd t.
\end{align*}
For the term ${\rm I}_2$, the preceding argument gives
\begin{align*}
{\rm I}_2 & =   2 s \lambda\sum_{i, j=1}^n\sum_{k, \ell=1}^n \int_Q \partial_{x_i}\vartheta a_{i j} a_{k\ell}   (\partial_{x_k} d) ( \partial_{x_j} w) (\partial_{x_\ell} w) \ \rmd x\rmd t  \\& \quad +  2 s \lambda\sum_{i, j=1}^n\sum_{k, \ell=1}^n \int_Q \vartheta \partial_{x_i}\Big[a_{i j} a_{k\ell}(\partial_{x_k} d)\Big]    ( \partial_{x_j} w) (\partial_{x_\ell} w) \ \rmd x\rmd t \\&
\quad + 2 s \lambda\sum_{i, j=1}^n\sum_{k, \ell=1}^n \int_Q \vartheta a_{i j} a_{k\ell}(\partial_{x_k} d)    ( \partial_{x_j} w) (\partial^2_{x_ix_\ell} w) \ \rmd x\rmd t := {\rm I}_2^1 + {\rm I}_2^2 + {\rm I}_2^3.
\end{align*}
By H\"{o}lder's inequality, we have $|{\rm I}_2^2| \leq c \int_Q s \lambda\vartheta |\nabla w|^2 \ \rmd x\rmd t$. Meanwhile, by direct computation, we obtain
\begin{align*}
{\rm I}_2^1 & = 2 s \lambda^2\sum_{i, j=1}^n\sum_{k, \ell=1}^n \int_Q \vartheta  a_{i j} a_{k\ell}  (\partial_{x_i} d) (\partial_{x_k} d) ( \partial_{x_j} w) (\partial_{x_\ell} w) \ \rmd x\rmd t \\& = 2 s \lambda^2 \int_Q \vartheta  \Big|\sum_{i, j=1}^n a_{i j}   (\partial_{x_i} d)  ( \partial_{x_j} w) \Big|^2 \ \rmd x\rmd t \geq0.
\end{align*}
Similar to ${\rm I}_1$, the following identity holds for the term ${\rm I}_2^3$
\begin{align*}
{\rm I}_2^3 & =  s \lambda\sum_{i, j=1}^n\sum_{k, \ell=1}^n \int_Q \vartheta a_{i j} a_{k\ell}(\partial_{x_k} d)   \partial_{x_\ell}\Big[ ( \partial_{x_j} w) (\partial_{x_i} w)\Big] \ \rmd x\rmd t
\\& = -s \lambda\sum_{i, j=1}^n\sum_{k, \ell=1}^n \int_Q (\partial_{x_\ell}\vartheta) a_{i j} a_{k\ell}(\partial_{x_k} d)( \partial_{x_j} w) (\partial_{x_i} w) \rmd x \rmd t \\& \quad
 -s \lambda \sum_{i, j=1}^n \sum_{k, \ell=1}^n \int_Q \vartheta \partial_{x_\ell}\Big[a_{i j} a_{k \ell} (\partial_{x_k} d)\Big]( \partial_{x_j} w) (\partial_{x_i} w) \rmd x \rmd t  \\&
 =-s \lambda^2 \int_Q \vartheta \sigma \sum_{i, j=1}^n a_{i j}( \partial_{x_j} w) (\partial_{x_i} w) \rmd x \rmd t \\& \quad -s \lambda \sum_{i, j=1}^n \sum_{k, \ell=1}^n \int_Q \vartheta \partial_{x_\ell}\Big[a_{i j} a_{k \ell} (\partial_{x_k} d)\Big]( \partial_{x_j} w) (\partial_{x_i} w) \rmd x \rmd t.
\end{align*}
Thus we arrive at
\begin{equation*}
{\rm I}_2 \geq  - \int_Q  s \lambda^2\vartheta\sigma \sum_{i, j=1}^n a_{i j}( \partial_{x_j} w) (\partial_{x_i} w) \rmd x \rmd t - c \int_Q s \lambda\vartheta |\nabla w|^2 \ \rmd x\rmd t.
\end{equation*}
It remains to estimate ${\rm I}_4$. By integration by parts, we get
\begin{align*}
{\rm I}_4 & =  -\sum_{i, j=1}^n \int_Q  s^3 \lambda^3 \vartheta^3 \sigma   a_{i j}(\partial_{x_i} d)\partial_{x_j} (w^2)\ \rmd x\rmd t  \\
& = \sum_{i, j=1}^n \int_Q  3s^3 \lambda^3 \vartheta^2 \partial_{x_j}\vartheta \sigma   a_{i j}(\partial_{x_i} d)w^2\ \rmd x\rmd t + \sum_{i, j=1}^n \int_Q  s^3 \lambda^3 \vartheta^3 \partial_{x_j}\Big[\sigma   a_{i j}(\partial_{x_i} d)\Big] w^2\ \rmd x\rmd t \\ & = \int_Q  3s^3 \lambda^4 \vartheta^3 \sigma^2 w^2\ \rmd x\rmd t +  \sum_{i, j=1}^n \int_Q  s^3 \lambda^3 \vartheta^3 \partial_{x_j}\Big[\sigma   a_{i j}(\partial_{x_i} d)\Big] w^2\ \rmd x\rmd t \\& \geq \int_Q  3s^3 \lambda^4 \vartheta^3 \sigma^2 w^2\ \rmd x\rmd t - c\int_Q  s^3 \lambda^3 \vartheta^3 w^2\ \rmd x\rmd t.
\end{align*}
Summing the preceding estimates for ${\rm I}$ yields
\begin{align}
\label{appendix-estimate-I}
 & \int_Q  3s^3 \lambda^4 \vartheta^3 \sigma^2 |w|^2\ \rmd x\rmd t  - \int_Q  s \lambda^2\vartheta\sigma \sum_{i, j=1}^n a_{i j}( \partial_{x_j} w) (\partial_{x_i} w) \rmd x \rmd t  \\
 \leq & {\rm I}+ c\int_Q  \Big(s^3 \lambda^3 \vartheta^3 + s^2\lambda^4\vartheta^2\Big) |w|^2\ \rmd x\rmd t + c \int_Q s \lambda\vartheta |\nabla w|^2 \ \rmd x\rmd t + \frac{c}{\lambda}\int_Q\frac{1}{s\vartheta}|\partial_t w|^2\ \rmd x \rmd t. \nonumber
\end{align}
The definition of $P_2w$ and the basic inequality $|a+b|^2 \leq 2|a|^2 + 2|b|^2$ imply that for large $s>0$ such that $s\vartheta\geq1$
\begin{align}\label{appendix-estimate-II}     \epsilon\int_Q\frac{1}{s\vartheta}|\partial_t w|^2\ \rmd x \rmd t & \leq 2\epsilon\int_Q\frac{1}{s\vartheta}|P_2 w|^2\ \rmd x \rmd t + 2\epsilon\int_Q 4s \lambda^2 \vartheta \Big|\sum_{i, j=1}^n a_{i j}(\partial_{x_i} d) (\partial_{x_j} w)\Big|^2 \ \rmd x \rmd t \\& \leq
    \frac12 {\rm II} + c\epsilon\int_Q s \lambda^2 \vartheta |\nabla w|^2 \ \rmd x \rmd t,\nonumber
\end{align}
with $\epsilon\in(0,\frac14]$ to be determined. By the inequalities \eqref{appendix-estimate-I} and \eqref{appendix-estimate-II}, and the definitions of ${\rm I}$ and ${\rm II}$, we deduce
{\small\begin{align}\label{appendix-important-estimate}
       & \int_Q  3s^3 \lambda^4 \vartheta^3 \sigma^2 |w|^2\ \rmd x\rmd t   - \int_Q  s \lambda^2\vartheta\sigma \sum_{i, j=1}^n a_{i j}( \partial_{x_j} w) (\partial_{x_i} w) \rmd x \rmd t + (\epsilon-\frac{c}{\lambda})\int_Q\frac{1}{s\vartheta}|\partial_t w|^2\ \rmd x \rmd t   \\
       \leq &c\!\int_Q|f|^2e^{2s\vartheta} \rmd x \rmd t \!+\! c\!\int_Q  \Big(s^3 \lambda^3 \vartheta^3 \!+\! s^2\lambda^4\vartheta^2\Big) |w|^2 \rmd x\rmd t \!+\! c\! \int_Q s \lambda\vartheta |\nabla w|^2  \rmd x\rmd t \!+\! c\epsilon\!\int_Q s \lambda^2 \vartheta |\nabla w|^2 \rmd x \rmd t.\nonumber
    \end{align}}
Using \eqref{equation-Pw} and \eqref{expansion-Pw} and multiplying the test function $s \lambda^2 \vartheta \sigma w$ on both sides give
\begin{equation*}
    {\rm J}_1 + {\rm J}_2 + {\rm J}_3 + {\rm J}_4 + {\rm J}_5 = \int_Q fe^{s\vartheta}s \lambda^2 \vartheta \sigma w \ \rmd x\rmd t \leq c\int_Q |f|^2e^{2s\vartheta} \ \rmd x\rmd t + c\int_Q s^2 \lambda^4 \vartheta^2 |w|^2 \ \rmd x\rmd t,
\end{equation*}
with the terms $({\rm J}_i)_{i=1}^5$ given by, respectively,
\begin{align*}
 {\rm J}_1 & = \int_Q s \lambda^2 \vartheta \sigma (\partial_t w )w \ \rmd x\rmd t,
 && {\rm J}_2  = - \sum_{i, j=1}^n \int_Q  s \lambda^2 \vartheta  a_{i j} \sigma (\partial^2_{x_i,x_j} w)w \ \rmd x\rmd t,\\
{\rm J}_3 & =  \sum_{i, j=1}^n \int_Q 2 s^2 \lambda^3 \vartheta^2  a_{i j}(\partial_{x_i} d) \sigma (\partial_{x_j} w)w \ \rmd x\rmd t, &&
 {\rm J}_4 = -\int_Q  s^3 \lambda^4 \vartheta^3 \sigma^2 w^2  \ \rmd x\rmd t, \\
 {\rm J}_5 & = \int_Q   s \lambda^2 \vartheta A_1  \sigma w^2 \ \rmd x\rmd t.
\end{align*}
Using a similar argument and the definition of $A_1$, we deduce $|{\rm J}_5|\leq c\int_Q s^2 \lambda^4 \vartheta^2 |w|^2 \ \rmd x\rmd t$ and
\begin{align*}
|{\rm J}_1| & = \Big|\frac12\int_Q s \lambda^2 \vartheta \sigma \partial_t (w^2) \ \rmd x\rmd t\Big| = \Big|\frac12\int_Q s \lambda^2 \partial_t\vartheta \sigma  |w|^2 \ \rmd x\rmd t\Big| \leq     c \int_Q s \lambda^3 \vartheta  |w|^2 \ \rmd x\rmd t.
\end{align*}
Further, the bounds on ${\rm J}_3$ and ${\rm J}_2$ also hold
\begin{align*}
    |{\rm J}_3| & =  \Big|\sum_{i, j=1}^n \int_Q 2s^2 \lambda^3 \vartheta(\partial_{x_j}\vartheta) a_{i j}(\partial_{x_i} d) \sigma w ^2 \ \rmd x\rmd t \\ & \quad +\sum_{i, j=1}^n \int_Q s^2 \lambda^3 \vartheta^2  \partial_{x_j}\Big[a_{i j}(\partial_{x_i} d) \sigma\Big] w^2 \ \rmd x\rmd t\Big|
    \leq c\int_Q s^2\lambda^4\vartheta^2 |w|^2\ \rmd x\rmd t,
\end{align*}
and
\begin{align*}
    {\rm J}_2 & = \sum_{i, j=1}^n \int_Q  s \lambda^2 \vartheta  a_{i j} \sigma (\partial_{x_j} w)(\partial_{x_i}w) \ \rmd x\rmd t + \sum_{i, j=1}^n \int_Q  s \lambda^2 \partial_{x_i}\vartheta  a_{i j} \sigma (\partial_{x_j} w)w \ \rmd x\rmd t \\&\quad + \sum_{i, j=1}^n \int_Q  s \lambda^2 \vartheta  \partial_{x_i}\Big[a_{i j} \sigma\Big] (\partial_{x_j} w)w \ \rmd x\rmd t \\ &
    \geq  \int_Q  s \lambda^2 \vartheta \sigma  \sum_{i, j=1}^n a_{i j} (\partial_{x_j} w)(\partial_{x_i}w) \ \rmd x\rmd t - c\int_Q s \lambda^3\vartheta |\nabla w||w| \ \rmd x\rmd t \\&
    \geq \int_Q  s \lambda^2 \vartheta \sigma  \sum_{i, j=1}^n a_{i j} (\partial_{x_j} w)(\partial_{x_i}w) \ \rmd x\rmd t - c\int_Q s^2 \lambda^4\vartheta^2|w|^2 \ \rmd x\rmd t - c\int_Q \lambda^2 |\nabla w|^2 \ \rmd x\rmd t.
\end{align*}
Consequently,
\begin{equation}\label{appendix-important-estimate-plus}
    \begin{split}
     & \int_Q  s \lambda^2 \vartheta \sigma  \sum_{i, j=1}^n a_{i j} (\partial_{x_j} w)(\partial_{x_i}w) \ \rmd x\rmd t  -\int_Q  s^3 \lambda^4 \vartheta^3 \sigma^2 |w|^2  \ \rmd x\rmd t \\ \leq &c\int_Q |f|^2e^{2s\vartheta} \ \rmd x\rmd t + c\int_Q s^2 \lambda^4\vartheta^2|w|^2 \ \rmd x\rmd t + c\int_Q \lambda^2 |\nabla w|^2 \ \rmd x\rmd t.
    \end{split}
\end{equation}
Then the arguments for $\eqref{appendix-important-estimate} $ and $  \eqref{appendix-important-estimate-plus}$ give
{\footnotesize
\begin{align*}
& \int_Q  s \lambda^2 \vartheta \sigma  \sum_{i, j=1}^n a_{i j} (\partial_{x_j} w)(\partial_{x_i}w) \ \rmd x\rmd t  + \int_Q  s^3 \lambda^4 \vartheta^3 \sigma^2 |w|^2  \ \rmd x\rmd t  + (\epsilon-\frac{c}{\lambda})\int_Q\frac{1}{s\vartheta}|\partial_t w|^2\ \rmd x \rmd t \\
\leq &c\!\int_Q|f|^2e^{2s\vartheta} \rmd x \rmd t \!+\! c\!\int_Q  \Big(s^3 \lambda^3 \vartheta^3 \!+\! s^2\lambda^4\vartheta^2\Big) |w|^2 \rmd x\rmd t  \!+\! c\! \int_Q \Big(s \lambda\vartheta + \lambda^2\Big) |\nabla w|^2  \rmd x\rmd t  \!+\! c\epsilon\!\int_Q s \lambda^2 \vartheta |\nabla w|^2 \rmd x \rmd t.
\end{align*}
} This proves the desired estimate \eqref{estimate-w}.
\end{proof}

\bibliographystyle{siam}

\end{document}